\theoremstyle{plain}
\newtheorem{thm}{Theorem}
\newtheorem{cor}[thm]{Corollary}
\newtheorem{lemma}[thm]{Lemma}
\newtheorem{prop}[thm]{Proposition}
\newtheorem{conjecture}[thm]{Conjecture}
\theoremstyle{definition}
\newtheorem{claim}{Claim}
\newcommand\resetClaimCounter{\setcounter{claim}{0}}
\newcommand*{\myproofname}{Proof}
\newenvironment{claimproof}[1][\myproofname]{\begin{proof}[#1]}{\end{proof}}
\newcommand{\sst}[2]{\left\{\;#1 \;\middle|\; #2 \;\right\}}
\DeclareMathOperator{\Arb}{Arb}
\DeclareMathOperator{\supp}{supp}
\title{Strong chromatic index and Hadwiger number}
\author{
Wouter Cames van Batenburg
\thanks{Department of Computer Science, Universit\'e Libre de Bruxelles, Belgium. 
Email: \protect\href{mailto:wcamesva@ulb.ac.be}{\protect\nolinkurl{wcamesva@ulb.ac.be}}. Supported by the Netherlands Organisation for Scientific Research (NWO), grant no.~613.001.217, and an ARC grant from the Wallonia-Brussels Federation of Belgium.}
\and
R\'emi de Joannis de Verclos
\thanks{Department of Mathematics, Radboud University Nijmegen, Netherlands. 
Email: \protect\href{mailto:r.deverclos@math.ru.nl}{\protect\nolinkurl{r.deverclos@math.ru.nl}}. Supported by a Vidi grant (639.032.614) of the Netherlands Organisation for Scientific Research (NWO).}
\and
Ross J. Kang
\thanks{Department of Mathematics, Radboud University Nijmegen, Netherlands. 
Email: \protect\href{mailto:ross.kang@gmail.com}{\protect\nolinkurl{ross.kang@gmail.com}}. Supported by a Vidi grant (639.032.614) of the Netherlands Organisation for Scientific Research (NWO).}
\and
Fran\c{c}ois Pirot
\thanks{Department of Mathematics, Radboud University Nijmegen, Netherlands
and LORIA, Universit\'e de Lorraine, Nancy, France.
Email: \protect\href{mailto:francois.pirot@loria.fr}{\protect\nolinkurl{francois.pirot@loria.fr}}.}
}
\date{\today}
\begin{document}

\maketitle

\begin{abstract}
We investigate the effect of a fixed forbidden clique minor upon the strong chromatic index, both in multigraphs and in simple graphs.

We conjecture for each $k\ge 4$ that any $K_k$-minor-free multigraph of maximum degree $\Delta$ has strong chromatic index at most $\frac32(k-2)\Delta$. We present a construction certifying that if true the conjecture is asymptotically sharp as $\Delta\to\infty$.
In support of the conjecture, we show it in the case $k=4$ and prove the statement for strong clique number in place of strong chromatic index.

By contrast, we make a basic observation that for $K_k$-minor-free simple graphs, the problem of strong edge-colouring is ``between'' Hadwiger's Conjecture and its fractional relaxation.

For $k\geq5$, we also show that $K_k$-minor-free multigraphs of edge-diameter at most $2$ have strong clique number at most $(k-\frac{1}{2})\Delta$.
\end{abstract}

\section{Introduction}\label{sec:intro}

All multigraphs in this paper are loopless. A {\em strong edge-colouring} of multigraph $G=(V,E)$ is a partition of the edges $E$ into parts each of which is an induced matching in $G$; the {\em strong chromatic index} $\chi_2'(G)$ of $G$ is the least number of parts in such a partition. 
Notice that $\chi_2'(G)$ is equal to $\chi(L(G)^2)$, the chromatic number of the square of the line graph of $G$. 
Here the \emph{square} $H^2$ of a graph $H$ is obtained by adding an edge between any two vertices that are at distance two in $H$.
Although simply defined, $\chi_2'$ has proven difficult to pin down: a conjecture of Erd\H{o}s and Ne\v{s}et\v{r}il~\cite{Erd88} from the 1980s about the extremal behaviour of $\chi_2'$ in general is notorious; see also~\cite{CaKa19,JKP19} for its context with respect to multigraphs.

It is eminently natural to focus on graph classes that are closed under taking minors.
Already in 1990, Faudree, Gy\'arf\'as, Schelp, and Tuza~\cite{FSGT90}, by cutely combining the Four Colour Theorem with Vizing's Theorem, determined the asymptotic behaviour of $\chi_2'$ for planar graphs.

\begin{thm}[\cite{FSGT90}]\label{thm:FSGT}
For any planar graph $G$ of maximum degree $\Delta$, the strong chromatic index of $G$ satisfies $\chi_2'(G) \le 4\Delta+4$.
There is a planar graph $G$ of maximum degree $\Delta$ satisfying $\chi_2'(G) = 4\Delta-4$.
\end{thm}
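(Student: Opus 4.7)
My plan has two parts: the upper bound via the announced combination of Vizing and the Four Colour Theorem, and an explicit extremal construction for the lower bound.

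For the upper bound, I would first invoke Vizing's theorem to decompose $E(G)$ into $\Delta+1$ matchings $M_1,\dots,M_{\Delta+1}$. For each $i$, I would build an auxiliary \emph{conflict graph} $H_i$ whose vertices are the edges of $M_i$, with $e,e'\in M_i$ adjacent in $H_i$ whenever some edge of $G$ joins an endpoint of $e$ to an endpoint of $e'$; note that a proper vertex-colouring of $H_i$ is exactly a partition of $M_i$ into induced matchings. The key observation is that $H_i$ is a minor of $G$: contract each edge of $M_i$ in $G$, obtaining a planar multigraph whose vertex set contains one vertex per edge of $M_i$, and in which two such vertices are adjacent precisely when the corresponding matching edges form a non-induced pair. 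Since planarity is preserved under taking minors, $H_i$ is planar, and by the Four Colour Theorem $\chi(H_i)\le 4$. Hence each $M_i$ splits into at most $4$ induced matchings, yielding $\chi_2'(G)\le 4(\Delta+1)=4\Delta+4$.

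For the lower bound, I would exhibit one explicit planar graph of maximum degree $\Delta$ achieving $\chi_2'=4\Delta-4$. The natural template is to place two adjacent high-degree vertices $u,v$ close to each other and to arrange their neighbourhoods so that a large induced substructure forces many edges to sit pairwise within a strong clique (i.e.\ any two of them are at distance at most two or share an endpoint). One clean candidate is to take a $\Delta$-regular planar ``gadget'' obtained by blowing up part of $K_4$: replace two adjacent vertices of $K_4$ by stars of size $\Delta-1$ and interlace the leaves via short paths so the resulting graph is planar of maximum degree $\Delta$, and then verify that $4\Delta-4$ pairwise conflicting edges can be pinpointed around the central edge $uv$. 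The lower-bound computation is then a counting or pigeonhole argument on a strong clique.

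The real work lies in the second part: the upper bound is essentially a three-line argument once one spots the conflict graph trick, whereas designing a planar graph with exactly $4\Delta-4$ pairwise-conflicting edges, and confirming that this bound is tight (i.e.\ no cleverer colouring exists), requires a careful construction. The main obstacle is therefore producing the explicit extremal family and proving the matching lower bound on the strong clique number; both amount to a meticulous check that the construction is simultaneously planar, $\Delta$-regular (or of maximum degree $\Delta$), and contains $4\Delta-4$ edges that must all receive distinct colours.
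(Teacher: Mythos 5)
Your upper bound argument is correct and essentially coincides with the approach the paper credits to Faudree, Gy\'arf\'as, Schelp and Tuza and later abstracts in Theorem~\ref{thm:omnibusreduction}\ref{strongratioupper}: decompose $E(G)$ into $\Delta+1$ matchings via Vizing's theorem, note that contracting each matching in $G$ yields a planar graph whose subgraph induced on the contracted vertices is precisely your conflict graph $H_i$, and apply the Four Colour Theorem. The observation that a proper colouring of $H_i$ corresponds to a refinement of $M_i$ into at most four induced matchings is exactly right, and summing over $i$ gives $4(\Delta+1)$.

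The lower bound, however, is a genuine gap. You never actually define a graph: ``replace two adjacent vertices of $K_4$ by stars \ldots\ and interlace the leaves via short paths'' does not pin down a construction, and you explicitly defer the verification (``the real work lies in the second part''). This matters because the obvious instance of your sketch --- $K_4$ with $\Delta-3$ pendant edges attached to every vertex, which is exactly the graph $K_{4,\Delta}$ used in the proof of Theorem~\ref{thm:omnibusreduction}\ref{cliqueratiolower} with $k=4$ --- has strong clique number only $4(\Delta-3)+\binom{4}{2} = 4\Delta-6$, two short of $4\Delta-4$. So the extremal example of Faudree et al.\ must be more delicate, and the bound $\chi_2'\ge 4\Delta-4$ is not a routine pigeonhole count on a strong clique you have not constructed (indeed, the target may exceed the strong clique number of whatever graph one uses, so a genuine colouring argument could be required). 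As written, your proposal establishes at most $\chi_2'({\cal G},\Delta)\ge 4\Delta-6$ for the class $\cal G$ of planar graphs, which does not reach the claimed value $4\Delta-4$.
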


\noindent
It turns out that Theorem~\ref{thm:FSGT} belongs to a broader basic phenomenon involving fractional colouring. 
 Writing ${\cal I}(G)$ for the collection of stable sets in $G$, the \emph{total weight} of a \emph{weight function} $w: {\cal I}(G)\to {\mathbb R}_0^+$ on the stable sets of $G$ is defined as the sum of the weights, $\sum_{S \in {\cal I}(G)} w(S)$. 
If the total weight is at most some nonnegative real number $r$ and additionally $\sum_{S \ni v, S\in {\cal I}(G)} w(S) \ge 1$ holds for every vertex $v$, then we say that $w$ is a \emph{fractional $r$-colouring} of $G$.
 The {\em fractional chromatic number $\chi_f(G)$} of $G$ is the smallest $r\in \mathbb{R}_{0}^{+}$ such that $G$ has a fractional $r$-colouring.
 Fractional chromatic number is the linear programming relaxation of chromatic number, and as such it is widely studied. 
In particular $\chi_f(G)\leq \chi(G)$ holds for every graph $G$.

For any class ${\cal G}$ of graphs, let us write $\chi({\cal G})$ (respectively $\chi_f({\cal G})$) for the largest chromatic number (respectively fractional chromatic number) over the graphs in $\cal G$, and $\chi_2'({\cal G},\Delta)$ for the largest strong chromatic index over the graphs in $\cal G$ of maximum degree $\Delta$.

\begin{thm}\label{thm:reduction}
Let ${\cal G}$ be a class of graphs that is closed under contraction of edges and under the attachment of pendant edges.  Then
\[
\chi_f({\cal G}) \le \limsup_{\Delta\to\infty} \frac{\chi_2'({\cal G},\Delta)}{\Delta} \le \chi({\cal G}).
\]
\end{thm}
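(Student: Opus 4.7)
The plan is to prove the two inequalities separately, by direct constructions.

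For the upper bound, I would begin with Vizing's theorem applied to $G\in{\cal G}$ of maximum degree $\Delta$, obtaining a partition of $E(G)$ into $\Delta+1$ matchings $M_1,\ldots,M_{\Delta+1}$. The key step is to refine each $M_i$ into at most $\chi({\cal G})$ induced matchings. To this end I would define the conflict graph $H_i$ on vertex set $M_i$, with $e,f\in M_i$ adjacent iff some edge of $G$ joins an endpoint of $e$ to an endpoint of $f$. Contracting every edge of $M_i$ in $G$ produces a graph $G/M_i\in{\cal G}$ (by closure under contraction), and the adjacencies between the contracted vertices match exactly the edges of $H_i$; hence $H_i$ is an induced subgraph of $G/M_i$, so $\chi(H_i)\le\chi(G/M_i)\le\chi({\cal G})$. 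A proper colouring of $H_i$ partitions $M_i$ into $\chi({\cal G})$ classes, each an induced matching of $G$. In total $\chi_2'(G)\le(\Delta+1)\chi({\cal G})$, and dividing by $\Delta$ and letting $\Delta\to\infty$ yields the right-hand inequality.

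For the lower bound, I would exploit the closure under pendant-edge attachment. Given $G\in{\cal G}$ and $t\in{\mathbb N}$, form $G^*_t\in{\cal G}$ by attaching $t$ pendant edges at every vertex, so that $\Delta(G^*_t)=\Delta(G)+t$. In any strong edge-colouring of $G^*_t$, two pendant edges sharing a colour must have distinct and non-adjacent non-leaf endpoints (a common endpoint or a $G$-edge between them would create a conflict), so each colour class restricted to pendant edges corresponds to an independent set in $G$; moreover the $t$ pendant edges at a fixed vertex $v$ must use $t$ distinct colours. Letting $y_S\in{\mathbb Z}_{\ge 0}$ denote the number of colour classes whose pendant-supporting independent set is $S\in{\cal I}(G)$, we obtain the constraints $\sum_{S\ni v}y_S=t$ for every $v\in V(G)$. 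The LP relaxation of this covering problem has value exactly $t\chi_f(G)$ by definition of $\chi_f$, so $\chi_2'(G^*_t)\ge\sum_S y_S\ge t\chi_f(G)$. Dividing by $\Delta(G^*_t)=\Delta(G)+t$ and letting $t\to\infty$ gives $\limsup_{\Delta\to\infty}\chi_2'({\cal G},\Delta)/\Delta\ge\chi_f(G)$, and taking the supremum over $G\in{\cal G}$ yields the left-hand inequality.

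I expect the main technical point to be the multigraph subtlety in the upper-bound argument: contracting $M_i$ may produce parallel edges, so one must either interpret ${\cal G}$ and $\chi({\cal G})$ consistently with multigraphs (using that chromatic number is insensitive to edge multiplicity) or else pass to the underlying simple graph and verify that closure under contraction is preserved. The lower bound, by contrast, is essentially a clean fractional-covering computation once the observation about pendant-edge conflicts is in place.
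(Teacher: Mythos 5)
Your proof is correct and follows essentially the same approach as the paper's: Vizing's theorem followed by contraction of each matching for the upper bound, and pendant-edge attachment together with the observation that pendant edges in a common colour class induce a stable set for the lower bound. The only cosmetic difference is that the paper routes the lower bound through the fractional strong chromatic index $\chi_{2,f}'$ (establishing a reusable intermediate statement) whereas you argue directly with integer colour classes and then pass to the LP bound, but the underlying construction and reasoning are identical.
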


\noindent
This generalisation of Theorem~\ref{thm:FSGT} closely links the optimisation of strong chromatic index with that of chromatic number in minor-closed graph classes.
In particular, 
the problem of finding the asymptotically best strong edge-colouring bounds for graphs without some fixed clique minor is a problem intermediary to Hadwiger's Conjecture~\cite{Had43} and its LP relaxation. Essentially via Theorem~\ref{thm:reduction}, the following would hold if Hadwiger's Conjecture holds, and moreover the conjecture would imply a fractional form of Hadwiger's Conjecture if true.
\begin{conjecture}\label{conj:intermediate}
For any $K_k$-minor-free graph $G$ of maximum degree $\Delta$, the strong chromatic index of $G$ satisfies $\chi_2'(G) \le (k-\frac12) \Delta$.  
\end{conjecture}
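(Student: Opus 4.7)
The plan is to extend the Faudree--Gy\'arf\'as--Schelp--Tuza argument underlying Theorem~\ref{thm:FSGT}, substituting Hadwiger's Conjecture for the Four Colour Theorem. First, apply Vizing's theorem to $G$ to obtain a proper edge-colouring by at most $\Delta+1$ matchings $M_1, \ldots, M_{\Delta+1}$. These matchings need not be \emph{induced}, so we refine each $M_i$ separately: let $H_i$ be the conflict graph on vertex set $M_i$ in which two edges $e, e'$ are adjacent whenever $G$ contains an edge joining an endpoint of $e$ to an endpoint of $e'$. A proper $c$-colouring of $H_i$ subpartitions $M_i$ into $c$ induced matchings.

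The key observation is that $H_i$ is a minor of $G$: contracting each $e \in M_i$ to a single vertex yields $G/M_i$, and $H_i$ is precisely the subgraph of $G/M_i$ induced on those contracted vertices. As $G$ is $K_k$-minor-free, so is $H_i$. Hadwiger's Conjecture then supplies $\chi(H_i) \le k-1$, and combining these colourings across all $M_i$ produces a strong edge-colouring of $G$ with at most $(k-1)(\Delta+1)$ colours.

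Since $(k-1)(\Delta+1) \le (k-\tfrac12)\Delta$ exactly when $\Delta \ge 2(k-1)$, the above (conditional) argument settles the large-$\Delta$ regime. The remaining bounded-$\Delta$ cases demand a separate treatment, which one might attempt by exploiting that $K_k$-minor-free graphs are $O(k\sqrt{\log k})$-degenerate, by direct case analysis on small $\Delta$, or by sharpening the Vizing step using the distinction between Class~1 and Class~2 graphs (and the fact that subgraphs of $K_k$-minor-free graphs inherit this structural restriction).

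The overriding obstacle is that Hadwiger's Conjecture is itself wide open; as the paragraph preceding the statement makes explicit, any unconditional proof of Conjecture~\ref{conj:intermediate} would entail a fractional form of Hadwiger via Theorem~\ref{thm:reduction}, so this problem genuinely sits between Hadwiger and its LP relaxation. Even granting Hadwiger, tightening $(k-1)(\Delta+1)$ down to $(k-\tfrac12)\Delta$ uniformly in $\Delta$ appears to require a more global strategy that avoids an additive loss of $k-1$ per Vizing class: for instance, colouring the matchings $M_i$ jointly rather than independently, or invoking list edge-colouring in place of a plain Vizing decomposition so that the subpartitions of the various $M_i$ can share colours in a coordinated way.
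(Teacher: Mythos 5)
This is a conjecture, not a theorem: the paper provides no proof of Conjecture~\ref{conj:intermediate}, only the observation that it sits between Hadwiger's Conjecture and its fractional relaxation, and is therefore wide open. Your analysis correctly reconstructs what the paper means by ``essentially via Theorem~\ref{thm:reduction}'': the Vizing decomposition into $\Delta+1$ matchings, the observation that each conflict graph $H_i$ is a minor of $G$ obtained by contracting $M_i$, and the application of Hadwiger to bound $\chi(H_i)\le k-1$ reproduce exactly the proof of Theorem~\ref{thm:omnibusreduction}\ref{strongratioupper}, yielding the conditional bound $(k-1)(\Delta+1)$. Your computation that this beats $(k-\frac12)\Delta$ only when $\Delta\ge 2(k-1)$ is also correct, and it is precisely the content of the paper's hedge word ``essentially'': Hadwiger would confirm the conjectured asymptotic slope but not the stated inequality at every $\Delta$, and the blown-up $5$-cycle witnesses tightness at $\Delta=2(k-1)$, exactly where the two bounds coincide. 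The paper offers nothing further in the direction of a full proof --- this is presented as an open problem --- so your honest identification of the obstacle (Hadwiger itself, plus the additive loss from treating Vizing classes independently) is the right conclusion, and your suggestion to coordinate colour reuse across the $M_i$ or to use list edge-colouring is a reasonable speculative route but is not pursued in the paper.

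One small clarification worth noting: you write that the conjecture would ``entail a fractional form of Hadwiger.'' Via Theorem~\ref{thm:reduction}, the left inequality $\chi_f({\cal G})\le\limsup_{\Delta\to\infty}\chi_2'({\cal G},\Delta)/\Delta$ shows that Conjecture~\ref{conj:intermediate} implies $\chi_f\le k-\frac12$ for $K_k$-minor-free graphs. This is a nontrivial strengthening of Reed--Seymour's factor-$2$ bound but is slightly weaker than the genuine fractional Hadwiger bound $\chi_f\le k-1$; the paper's phrasing ``a fractional form'' is deliberately loose on this point.
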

\noindent
Curiously the bound in Conjecture~\ref{conj:intermediate} is sharp for a \emph{blown-up $5$-cycle}, the same example conjectured to be extremal for the Erd\H{o}s-Ne\v{s}et\v{r}il Conjecture; we define and discuss this and other constructions below Theorem~\ref{thm:isclique}.
Note that the best general upper bounds on the fractional chromatic and chromatic numbers in terms of Hadwiger number have long been due respectively to Reed and Seymour~\cite{ReSe98} and to Kostochka~\cite{Kos82} (but there have been subsequent constant factor improvements). 
After submission of this manuscript, Norin, Song and Postle~\cite{NPS20,P20} made significant further progress for the chromatic number.

While the discussion thus far concerned simple graphs, mysteriously the situation seems to be quite different for multigraphs.
In particular, Van Loon and the third author~\cite{LoKa19} recently noticed a tantalising gap with respect to the strong chromatic index of planar {\em multi}graphs.
Drawing inspiration from~\cite{FSGT90}, the upper bound below is a combination of the Four Colour Theorem with Shannon's Theorem (instead of Vizing's Theorem); the lower bound construction is a modified octahedral graph.

\begin{thm}[\cite{LoKa19}]\label{thm:LoKa}
For any planar multigraph $G$ of maximum degree $\Delta$, the strong chromatic index of $G$ satisfies $\chi_2'(G) \le 6\Delta$.
There is a planar multigraph $G$ of maximum degree $\Delta$ satisfying $\chi_2'(G) = \frac92\Delta-3$.
\end{thm}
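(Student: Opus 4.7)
My plan is to prove the upper and lower halves of Theorem~\ref{thm:LoKa} separately.

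\emph{Upper bound.} I would combine Shannon's Theorem with the Four Colour Theorem, echoing the argument of~\cite{FSGT90} with Shannon in place of Vizing. First, apply Shannon's Theorem to partition $E(G)$ into at most $\lfloor 3\Delta/2\rfloor$ matchings $M_1,\dots,M_q$. For each matching $M_t$, form an auxiliary \emph{conflict graph} $H_t$ whose vertex set is $M_t$, where $e,f\in M_t$ are adjacent in $H_t$ iff some edge of $G$ joins $V(e)$ to $V(f)$ (equivalently, iff $\{e,f\}$ fails to be an induced matching of $G$). The crucial observation is that $H_t$ is planar: contracting the edges of $M_t$ inside $G$ produces a minor of $G$, hence a planar multigraph, in which $H_t$ appears, after suppressing loops and parallel edges, as the induced simple subgraph on the contracted vertices. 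By the Four Colour Theorem, $\chi(H_t)\le 4$, which partitions $M_t$ into at most four induced matchings of $G$. Summing, $\chi_2'(G)\le 4\lfloor 3\Delta/2\rfloor\le 6\Delta$.

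\emph{Lower bound.} I would construct an explicit planar multigraph $G$ by placing carefully chosen edge multiplicities on a small planar base graph, presumably a modification of the octahedron $K_{2,2,2}$. Observe that the octahedron uniformly thickened by multiplicity $m$ achieves only the ratio $\chi_2'/\Delta=3$, so the construction must involve non-uniform multiplicities or an extended base. The verification then has three parts: checking planarity, computing $\Delta$ as a function of the multiplicities, and exhibiting a collection of $\frac{9}{2}\Delta-3$ edges pairwise at line-graph-distance at most $2$, which forces that many distinct colours in any strong edge-colouring.

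\emph{Main obstacles.} In the upper bound, the key step is spotting the planarity of $H_t$ via the contraction argument; once this is in hand the rest runs on rails. In the lower bound, the central difficulty lies in guessing a sharp construction whose ratio $\chi_2'/\Delta$ beats the natural bound $3$ from the uniformly-thickened octahedron; once the right template is identified, the verification should reduce to elementary counting of vertex degrees and of a clique in the square of the line graph.
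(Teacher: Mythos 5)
Your upper bound argument is correct and is exactly the approach the paper attributes to~\cite{LoKa19} (and to~\cite{FSGT90} for the simple planar case): decompose $E(G)$ into at most $\lfloor 3\Delta/2\rfloor$ matchings via Shannon's Theorem, observe that contracting each matching yields a planar minor in which the conflict graph appears as an induced subgraph, apply the Four Colour Theorem, and sum over matchings with disjoint colour palettes. The paper itself only cites this theorem from~\cite{LoKa19} and gives no proof, so there is nothing in-paper to compare against, but this half of your argument is sound and complete.

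The lower bound, however, is not proved. You observe correctly that the uniformly-thickened octahedron only attains $\chi_2'/\Delta = 3$ and that one needs a modification with non-uniform multiplicities, but you stop at ``presumably a modification of the octahedron'' without exhibiting the multigraph, computing its maximum degree, or producing the required strong clique. The theorem is an existence statement, so the explicit construction is not a detail one can defer: without it there is no object for which to ``check planarity, compute $\Delta$, and exhibit a collection of $\frac92\Delta-3$ edges.'' Note also that your verification plan implicitly assumes the construction has $\chi_2' = \omega_2'$ (e.g.\ that $L(G)^2$ is a clique); this holds for the intended construction but is an additional thing one must arrange and check. For orientation, the paper's Section~\ref{sec:diabolical} gives a related construction $G_{5,\Delta}$ with a strong clique of size $\frac92\Delta-6$, which already beats the uniform octahedron but falls short of the stated $\frac92\Delta-3$; closing this gap requires the precise modified-octahedron multigraph from~\cite{LoKa19}, which is missing from your proposal.
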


\noindent
It was also conjectured~\cite[Conj.~3.2]{LoKa19} that the construction in Theorem~\ref{thm:LoKa} has the correct asymptotically extremal behaviour. In terms of Hadwiger number, we go even further by proposing the following.

\begin{conjecture}\label{conj:main}
Let $k\ge 4$.
For any $K_k$-minor-free multigraph $G$ of maximum degree $\Delta$, the strong chromatic index of $G$ satisfies $\chi_2'(G) \le \frac32(k-2) \Delta$.
\end{conjecture}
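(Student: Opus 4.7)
The plan is to prove Conjecture~\ref{conj:main} by induction on $|V(G)|$. Let $G$ be a $K_k$-minor-free multigraph of maximum degree $\Delta$, and let $\hat G$ denote its underlying simple graph, which is also $K_k$-minor-free. By Mader's edge-density bound for $K_k$-minor-free simple graphs, $\hat G$ has a vertex $v$ with $\deg_{\hat G}(v) \leq 2(k-2)$ (up to lower-order additive terms). Set $G' := G - v$; this inherits $K_k$-minor-freeness and maximum degree at most $\Delta$, so by the inductive hypothesis $\chi_2'(G') \leq \frac{3}{2}(k-2)\Delta$. The crux is to extend such a colouring to the multiedges at $v$.

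The edges at $v$ form a multi-star $S_v$ with centre $v$ and $t := \deg_{\hat G}(v) \leq 2(k-2)$ distinct leaves $u_1, \ldots, u_t$ of respective multiplicities $m_1, \ldots, m_t$ summing to at most $\Delta$. An edge $vu_i$ conflicts with three kinds of already-coloured edges: the other multiedges incident to $v$ or to $u_i$; the edges $u_iu_j$ of $\hat G$ between leaves; and, for each such leaf-adjacency $u_i u_j$, the other multiedges at $u_j$. The total number of blocked colours can therefore reach $\Omega(k\Delta)$, so no greedy extension closes the gap. Instead I would invoke Shannon's bound on a carefully chosen auxiliary multigraph $H_v$ that models only the colour constraints at $v$---namely the multigraph on the leaf set $\{u_1,\dots,u_t\}$ with $m_i$ parallel edges charged to $u_i$ and "virtual" edges recording the conflict across each leaf-adjacency---and match its palette $\frac{3}{2}\Delta(H_v) \leq \frac{3}{2}\Delta$ with the slack left over from the inductive colouring.

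The main obstacle is this very extension step: closing it within Shannon's factor $\frac{3}{2}$ rather than Vizing's $2$ almost certainly requires strengthening the inductive hypothesis by a \emph{reservation} clause, asserting that the colouring of $G'$ can be chosen so that only $\frac{3}{2}(k-3)\Delta$ of the palette appears on edges meeting $N_{\hat G}(v)$, leaving a block of $\frac{3}{2}\Delta$ colours available for $S_v$. Identifying a reservation condition that is simultaneously strong enough to support the Shannon extension and weak enough to propagate through the induction---and controlling the interplay between the $(k-2)$ factor (arising from the density structure of $K_k$-minor-free graphs) and the $\frac{3}{2}$ factor (Shannon's constant)---is the principal technical challenge, and is presumably what prevents the proof of the $k=4$ case of the authors from generalising in a straightforward manner. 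An alternative line of attack would be to bypass induction entirely via an entropy-compression or local lemma argument, at the cost of obtaining only $\chi_2'(G) \leq (\tfrac{3}{2}(k-2) + o(1))\Delta$ as $\Delta \to \infty$.
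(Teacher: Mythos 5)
This statement is Conjecture~\ref{conj:main}: it is an \emph{open conjecture}, and the paper does not claim a proof. The authors only establish the $k=4$ case (Theorem~\ref{thm:K4}) and the strong clique number analogue (Theorem~\ref{thm:main}); they also give a construction showing asymptotic sharpness if the conjecture is true. You appear to be aware that your write-up is a plan rather than a proof, since you explicitly flag the ``reservation clause'' as an unresolved technical challenge. But beyond the acknowledged gap, there are two substantive problems with the proposed route.

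First, the opening degeneracy step is incorrect for general $k$. You appeal to ``Mader's edge-density bound'' to find a vertex of degree at most $2(k-2)$ in the underlying simple graph. That bound is genuinely of order $k-2$ only for $k\le 7$ (Mader's exact extremal function). For large $k$, $K_k$-minor-free graphs can have minimum degree $\Theta(k\sqrt{\log k})$ (Kostochka, Thomason), and the paper itself notes that Kostochka's bound yields only $O(k\sqrt{\log k}\,\Delta)$ for the strong chromatic index. So in the induction you would obtain $t = \Theta(k\sqrt{\log k})$ leaves at $v$, not $O(k)$, and the extension step would already overshoot $\frac32(k-2)\Delta$ before any colouring analysis begins. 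A Vizing-style vertex-by-vertex induction cannot circumvent this without first resolving (the linear form of) Hadwiger's Conjecture, which the paper flags as exactly the intermediary obstruction (Theorem~\ref{thm:reduction} and Conjecture~\ref{conj:intermediate}).

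Second, even in the range $k\le 7$ where the degree bound holds, the extension step you sketch is not merely ``difficult to propagate'' --- there is no a priori reason for the required reservation to exist. Conflicts at $v$ do not concern a fixed sub-palette; which colours are blocked depends on the colouring chosen on $G'$, and the edges blocking $v$ can be spread across the entire palette. Strengthening the hypothesis to reserve $\frac32\Delta$ colours for $N(v)$ would require controlling colours on second-neighbourhoods globally, which is exactly why strong edge-colouring resists greedy induction. The paper's own $k=4$ argument avoids this by a very different device: it works with a pair $(G,A)$, uses that $K_4$-minor-free graphs have a vertex $v$ with at most \emph{two} neighbours among high-degree vertices, and when $v$ has no pendant neighbour it \emph{splits} $v$ into two leaves (Figure~\ref{fig:proof for k4}) to reduce $|V_{\ge2}(G)|$ while preserving a supergraph relation between the line-graph squares; the extension then needs only the trivial degree bound, not Shannon. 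That trick is special to the structure of $K_4$-minor-free graphs and does not generalise the way you suggest. For $k\ge5$ the conjecture remains open, and the paper's supporting evidence is instead the strong-clique bound of Theorem~\ref{thm:main}, proved via the weight-decomposition Lemma~\ref{lem:reduction:weighted}, which is a fundamentally different reduction (to unions of odd cycles and edges) rather than vertex deletion.
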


\noindent
In Section~\ref{sec:diabolical}, we give an elementary construction to certify that the bound in Conjecture~\ref{conj:main} is asymptotically sharp if true, for each fixed $k\ge 4$.
For $k=3$ the bound fails by considering a single edge to each endpoint of which is attached $\Delta-1$ pendant edges.
In Section~\ref{sec:K4}, we prove the conjectured bound in the case $k=4$.
\begin{thm}\label{thm:K4}
  For any $K_4$-minor-free multigraph $G$ of maximum degree $\Delta$,
  the strong chromatic index of $G$
  satisfies $\chi_{2}'(G) \le 3\Delta$.
\end{thm}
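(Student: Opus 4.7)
The plan is to prove Theorem~\ref{thm:K4} by induction on $|V(G)|$, the base case $|V(G)|\le 1$ being vacuous. For the inductive step, I invoke the classical structural fact that every $K_4$-minor-free simple graph (equivalently, every graph of treewidth at most $2$) contains a vertex of degree at most $2$; applied to the underlying simple graph of $G$, this yields a vertex $v$ of simple-degree at most $2$, on which I case-analyse.

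If $v$ is simple-isolated, removing it yields a smaller $K_4$-minor-free multigraph of maximum degree at most $\Delta$, to which the hypothesis applies directly. If $v$ has a single simple-neighbour $u$ linked by $m = \mu_G(uv)$ parallel edges, I apply induction to $G-v$ to obtain a strong edge-colouring $c$ of $G-v$ with at most $3\Delta$ colours, and extend $c$ with $m$ new distinct colours on the $uv$-bundle. Each such new colour must avoid the palette $F\subseteq\{1,\ldots,3\Delta\}$ of colours on edges at distance at most $2$ from the bundle in $L(G)$; concretely, the constraining edges are the $d_G(u)-m\le\Delta-m$ edges incident to $u$ outside the bundle, together with edges incident to some $w\in N_G(u)\setminus\{v\}$ that do not pass through $u$. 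The simple-degree-$2$ case, split on whether $v$'s two simple-neighbours are themselves adjacent, is handled by an analogous argument.

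The crux is showing $|F|\le 3\Delta - m$. A naive edge-count is far too weak since the second family of constraining edges alone can contain $\Theta(\Delta^2)$ members (for instance when $u$ is the centre of a star of large pendant bundles). The key structural input I would leverage is that $K_4$-minor-freeness forces the simple graph induced on $N_G(u)\setminus\{v\}$ to be a forest: any cycle there together with $u$ as an apex would produce a $K_4$-minor in $G$. Combining this forest-link property with the fact that $c$ is a strong edge-colouring---so parallel edges and edges sharing a vertex receive pairwise distinct colours---I would argue that colours on the ``distance-$2$'' edges through distinct simple-neighbours of $u$ must collapse sufficiently to keep $|F|$ below $3\Delta - m$, completing the extension via a Hall-type palette argument. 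I expect the main difficulty to be making this palette bound work cleanly in the tighter sub-cases (notably a triangle configuration of high multiplicity), perhaps requiring a more judicious choice of $v$ within $G$ (say, inside a leaf of a block decomposition) or a strengthened inductive hypothesis that tracks additional local colour constraints.
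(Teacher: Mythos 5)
Your plan to delete a vertex $v$ of simple-degree at most $2$, colour $G-v$ by induction, and greedily re-insert the $uv$-bundle has a genuine gap at exactly the step you flag as the crux: the bound $|F|\le 3\Delta-m$ does not hold for an arbitrary inductive colouring. Consider the simple spider $G$ with centre $u$, a single edge $uv$, simple edges $uw_1,\dots,uw_{\Delta-1}$, and $\Delta-1$ pendant edges at each $w_i$. This tree is $K_4$-minor-free, $N_G(u)\setminus\{v\}$ is an edgeless forest, and $G-v$ has $\chi_2'=\Delta$; but the induction hypothesis only hands you \emph{some} strong edge-colouring of $G-v$ with $3\Delta$ colours. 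Since edges at distinct $w_i$ are pairwise non-adjacent in $L(G-v)^2$, a valid $3\Delta$-colouring can deliberately use fresh palettes at $w_1,w_2,w_3,\dots$ so that every colour in $\{1,\dots,3\Delta\}$ appears on some edge at distance at most $2$ from $uv$; then $|F|=3\Delta$ and no colour is free. The forest structure of $N(u)$ gives no collapse precisely because a forest may have no edges at all, so colours on edges at different $w_i$ are never forced to coincide. Your speculated ``strengthened inductive hypothesis that tracks additional local colour constraints'' is exactly what is needed, but it is the hard part, and as stated your sketch does not supply it.

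The paper resolves this by a different use of the same structural input. It proves the stronger statement that for any edge subset $A\subseteq E(G)$, the graph $L(G)^2[A]$ admits a proper $3\Delta_A$-colouring, and it inducts \emph{first} on $|V_{\ge 2}(G)|$, the number of vertices with at least two neighbours, and only secondarily on $|V(G)|$. The degree-$\le 2$ vertex is chosen in $G[V_{\ge 2}(G)]$, not in $G$. If such a vertex $v$ has a pendant neighbour $w$, then every edge in the $vw$-bundle genuinely has degree at most $3\Delta_A-1$ in $L(G)^2[A]$ (because $v$ has at most two non-pendant neighbours), so one can delete $w$ and greedily re-insert the bundle with a guaranteed free colour; this is a true degree bound, not a palette-size hope. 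If $v$ has no pendant neighbour, it is split into two degree-one vertices after adding the edge $u_1u_2$, which strictly decreases $|V_{\ge 2}|$ while only enlarging $L(G)^2[A]$. You could likely repair your argument by adopting this choice of $v$ (or an equivalent ``judicious choice inside a block leaf'') together with the partial-colouring strengthening, but the Hall-type palette argument on its own does not close the gap.
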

\noindent
The $k=5$ case of Conjecture~\ref{conj:main} includes planar multigraphs, so is a stronger form of~\cite[Conj.~3.2]{LoKa19}.

We have some further justification for Conjecture~\ref{conj:main}. In particular, we show that the construction we give in Section~\ref{sec:diabolical} is asymptotically extremal for a strongly related parameter.
A {\em strong clique} of a multigraph $G$ is a set of edges every pair of which are incident or connected by an edge in $G$;
the {\em strong clique number} $\omega_2'(G)$ of $G$ is the size of a largest such set.
Notice $\omega_2'(G)$ is equal to $\omega(L(G)^2)$, the clique number of the square of the line graph of $G$.
Moreover, $\omega_2'(G) \le \chi_2'(G)$ always.
Our main result is as follows.

\begin{thm}\label{thm:main}
Let $k\ge 4$.
For any $K_k$-minor-free multigraph $G$ of maximum degree $\Delta$, the strong clique number of $G$ satisfies $\omega_2'(G) \le \frac32(k-2)\Delta$.
\end{thm}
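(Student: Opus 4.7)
The plan is to bound $|S|$ by a local analysis around a chosen $S$-edge, combined with a minor-extraction argument exploiting the $K_k$-minor-free hypothesis.

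Fix any edge $e_0 = u_0 v_0 \in S$. By the strong-clique axiom, every other $f \in S$ is either incident to $e_0$ or joined to it by a ``certificate'' edge $g_f \in E(G)$ with one endpoint in $\{u_0, v_0\}$ and the other in $V(f)$. Let $S(u_0), S(v_0)$ be the $S$-edges incident to $u_0, v_0$ respectively, and $S' = S \setminus (S(u_0) \cup S(v_0))$. The degree bound gives $|S(u_0)|, |S(v_0)| \le \Delta$, so $|S| \le 2\Delta + |S'|$, reducing the task to bounding $|S'|$ by $\left(\frac{3(k-2)}{2} - 2\right)\Delta$.

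For each $f \in S'$, the certificate edge $g_f$ singles out a ``near'' endpoint $x_f \in (N(u_0) \cup N(v_0)) \setminus \{u_0, v_0\}$ lying on $f$. The heart of the proof is to argue that the $K_k$-minor-free hypothesis forces $|S'|$ to respect the stated bound. I would do this by contradiction, extracting a $K_k$-minor when $|S'|$ is too large: the support of $\{x_f : f \in S'\}$ together with $u_0, v_0$ provides the seeds of branch sets in $G$, while the certificate edges and $S'$-edges give pairwise connectivities between branch sets. A balanced extraction, where every pair of new branch sets beyond $\{u_0, v_0\}$ consumes on average three near-endpoint vertices (two as seeds and one as shared link), should yield the coefficient $\frac{3(k-2)}{2}$.

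The main obstacle is the minor-extraction itself: one has to build $k$ pairwise-adjacent disjoint connected subgraphs of $G$ from the local structure around $e_0$, matching the coefficient $\frac{3}{2}$ exactly. The multigraph setting complicates matters, since $S'$-edges may concentrate on few near endpoints via multi-edges, requiring careful accounting of where each vertex is spent. The construction of Section~\ref{sec:diabolical} should serve as the extremal boundary, explicitly realizing the three-vertices-per-two-branch-sets trade-off that determines the coefficient.
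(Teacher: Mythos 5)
The proposal has a genuine gap at its core: the minor-extraction step, which is the entire content of the theorem, is left as a vague aspiration (``should yield the coefficient $\frac{3(k-2)}{2}$'') with no construction of branch sets and no accounting that actually produces the $\tfrac32$. You yourself flag ``the main obstacle is the minor-extraction itself''; the plan does not get past it.

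The opening reduction is also already too lossy. After fixing an \emph{arbitrary} edge $e_0=u_0v_0$ you bound $|S|\le 2\Delta+|S'|$ and aim to prove $|S'|\le\bigl(\tfrac{3(k-2)}{2}-2\bigr)\Delta$. But in the extremal multigraph $G_{k,\Delta}$ of Section~\ref{sec:diabolical}, taking $e_0=a_1b_1$ (a multiedge of multiplicity about $\Delta/2$) gives $|S(a_1)\cup S(b_1)|\approx\tfrac32\Delta-O(k)$ rather than $2\Delta$, hence $|S'|\approx\tfrac32(k-2)\Delta-\tfrac32\Delta-O(k^2)$, which exceeds $\bigl(\tfrac{3(k-2)}{2}-2\bigr)\Delta$ once $\Delta\gg k^2$. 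So the bound on $|S'|$ you are trying to prove is false for that choice of $e_0$, and ``fix any edge'' cannot be right; some global choice or different accounting is needed before the decomposition is even sound.

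More fundamentally, a local analysis around a single edge does not see where the factor $\tfrac32$ comes from. In the paper it arises globally: Lemma~\ref{lem:reduction:weighted} decomposes the multiplicity function of the strong clique (as a weight on the underlying simple graph) into pieces, each supported on a vertex-disjoint union of odd cycles and single edges, with maximum weighted degrees summing to $\Delta(w)$. After rescaling, this reduces the whole problem to the case $\Delta=2$, where the support is a disjoint union of odd cycles (weight $1$ per edge) and double edges (weight $2$), and the $\tfrac32$ is then transparent via the line-graph independence number. The $K_k$-minor is extracted from that reduced structure through a structured case analysis: a ``domination'' relation among triangles (Claim~\ref{clm:C3s}) is used to build an arborescence-based spanning tree hitting one vertex per triangle, leaving a matching to contract into clique branch sets (Claim~\ref{clm:tree}). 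Your near-endpoint count has no analogue of this, and the concentration issue you mention (many parallel $S'$-edges sharing a near endpoint) is exactly what the reduction lemma is designed to neutralize; without it, I do not see how the exact coefficient can be recovered.
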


\noindent
For each $k\ge 4$, the construction given in Section~\ref{sec:diabolical} matches this bound up to an additive term at most quadratic in $k$. 
Thus Theorem~\ref{thm:main} implies that if there are examples with strong chromatic index asymptotically larger than posited in Conjecture~\ref{conj:main}, then it is due to some global, rather than local, obstruction.

It is important to note here that the bald combination of Hadwiger's Conjecture with Shannon's Theorem yields a bound strictly worse than in Conjecture~\ref{conj:main}; specifically, the bound $\chi_2'(G) \le \frac32(k-1)\Delta$ holds conditionally on the truth of Hadwiger's Conjecture. In contrast to the case of simple graphs (cf.~Theorem~\ref{thm:reduction}), the potential additive discrepancy of $\frac32\Delta$ is intriguing.
In light of the known cases of Hadwiger's Conjecture, the cases $k\in\{5,6\}$ of Conjecture~\ref{conj:main} are especially interesting.
In general the result of Kostochka~\cite{Kos82} implies an unconditional bound of $O(k\sqrt{\log k}\Delta)$. After submission of this manuscript, breakthroughs of Norin, Postle and Song~\cite{NPS20,P20} have yielded an unconditional bound of $O(k (\log k)^{\beta})$ for any $\beta>4$, later improved further to $O(k (\log \log k)^6)$. 

Important to the proof of Theorem~\ref{thm:main} is a reduction (Lemma~\ref{lem:reduction:weighted} below) with which we can restrict attention to the case that the edges of nontrivial multiplicity form an especially well-structured submultigraph.
In fact, the reduction is also highly relevant for the fractional strong chromatic index, as we show in Section~\ref{sec:fractional reduced} (see Conjecture~\ref{conj:fractional reduced} below).

For $k=5$, the construction in Section~\ref{sec:diabolical} is ``very local'', in the sense that \emph{every} two of its edges are incident or joined by an edge, i.e.~it has edge-diameter $2$. In contrast, the following generalisation of~\cite[Prop.~3.3]{LoKa19} shows that any construction (asymptotically) meeting the bound of Theorem~\ref{thm:main} cannot be ``very local'' once $k>5$. 
(Note that $k-\frac12<\frac32(k-2)$ if and only if $k>5$.)

\begin{thm}\label{thm:isclique}
Let $k\ge 5$. For any $K_k$-minor-free multigraph $G$ of maximum degree $\Delta$ such that every two of its edges are incident or joined by an edge, the strong clique number of $G$ satisfies
$\omega_2'(G)  \le (k - \frac12) \Delta.$ 
\end{thm}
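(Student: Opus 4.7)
The hypothesis that every two edges of $G$ are incident or joined by an edge forces $L(G)^{2}$ to be the complete graph on $|E(G)|$ vertices, so $\omega_{2}'(G)=\omega(L(G)^{2})=|E(G)|$, and the theorem reduces to showing $|E(G)|\le(k-\tfrac{1}{2})\Delta$.

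The central combinatorial ingredient is a matching-to-minor observation: for any matching $\{e_{1},\dots,e_{r}\}$ of $G$, the endpoint pairs $V(e_{i})$ form $r$ pairwise disjoint connected branch sets, and the edge-diameter condition guarantees that every two such sets are linked by an edge (namely, by the connector provided for the corresponding pair of disjoint matching edges). This exhibits a $K_{r}$ minor of $G$, so the $K_{k}$-minor-free hypothesis yields $\nu(G)\le k-1$.

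Combining $|E(G)|\le\chi'(G)\,\nu(G)$ with Vizing's theorem for multigraphs, $\chi'(G)\le\Delta+\mu(G)$ (where $\mu(G)$ denotes the maximum edge multiplicity), produces
\[
|E(G)| \;\le\; (\Delta+\mu(G))(k-1).
\]
Whenever $\mu(G)\le\tfrac{\Delta}{2(k-1)}$, the right-hand side is at most $\Delta(k-1)+\tfrac{\Delta}{2}=(k-\tfrac{1}{2})\Delta$, establishing the claim in this regime.

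The main obstacle is the complementary high-multiplicity regime $\mu(G)>\tfrac{\Delta}{2(k-1)}$. To handle it I would invoke the reduction of Lemma~\ref{lem:reduction:weighted}, restricting attention to multigraphs whose nontrivial multi-edges form an especially structured submultigraph, and then exploit the dense local configuration forced by a high-multiplicity edge together with the edge-diameter-$2$ condition: for instance, a multi-edge $uv$ of multiplicity exceeding $\tfrac{\Delta}{2(k-1)}$ can be used as the ``spine'' of an enlarged branch set in the matching-to-minor construction, upgrading it from $K_{r}$ to $K_{r+1}$ and thereby either sharpening the above edge count to close the gap, or yielding a forbidden $K_{k}$ minor outright.
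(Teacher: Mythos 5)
Your first two observations are exactly right and match the paper's: $L(G)^2$ is complete so $\omega_2'(G)=|E(G)|$, and contracting a matching of size $r$ (each branch set an edge's endpoint pair, pairwise connected by the edge-diameter condition) gives a $K_r$ minor, so $\nu(G)\le k-1$.

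The rest of the argument does not close. Bounding $|E(G)|\le\chi'(G)\nu(G)$ and then invoking Vizing for multigraphs, $\chi'(G)\le\Delta+\mu$ with $\mu$ the maximum multiplicity, only yields $(k-\tfrac12)\Delta$ when $\mu\le\tfrac{\Delta}{2(k-1)}$. In the complementary high-multiplicity regime you only gesture at a fix. The ``upgrade $K_r$ to $K_{r+1}$'' idea does not obviously work: a multi-edge $uv$, however fat, still contributes only one branch set to the minor, and you cannot use $u$ and $v$ as two separate branch sets because they need to be disjoint from the matching edges. Appealing to Lemma~\ref{lem:reduction:weighted} also does not immediately help, since that lemma was designed to restructure multiplicities for strong-clique estimates in general, not to supply new branch sets in a minor construction. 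So the high-multiplicity case is a genuine gap, not merely an unwritten routine step.

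The paper sidesteps multiplicities entirely by avoiding edge colourings. It applies the Tutte--Berge formula to a witness set $U$, observes that the edge-diameter hypothesis forces $G-U$ to have at most one component with an edge (call its vertex set $A$), deduces $\nu(G)\ge|U|+(|A|-1)/2$, and bounds $|E(G)|\le\Delta(|U|+|A|/2)\le\Delta(\nu(G)+\tfrac12)\le(k-\tfrac12)\Delta$. This is sharper than $\chi'(G)\nu(G)$ because each colour class is typically far smaller than $\nu(G)$; Tutte--Berge captures the true matching defect. Moreover, the Tutte--Berge argument gives the result under the weaker hypothesis $\nu(G)<k$ (the paper's Theorem~\ref{thm:iscliquestronger}), not just $K_k$-minor-freeness. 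If you want to salvage your approach you would need an actual construction of the extra branch set, and I do not see one; I'd recommend switching to the Tutte--Berge route.
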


\noindent
A \emph{blown-up $5$-cycle} of order $t$ is the graph obtained from the $5$-cycle by substituting each vertex $u$ with a stable set $S_u$ of size $t$ and substituting each edge $uv$ with a complete bipartite graph between $S_u$ and $S_v$.
Note that for odd $t\geq 3$, the blown-up $5$-cycle of order $t$ has no $K_{\frac52t+\frac12}$-minor~(for otherwise, by the pigeonhole principle, at least one branch set of such a minor would need to have size $1$ and thus can neighbour at most $2t< \frac{5t}{2}-\frac{1}{2}$ other branch sets; contradiction), has maximum degree $2t$ and a strong clique of size $5t^2$; this shows that the bound in Theorem~\ref{thm:isclique} (and that of Conjecture~\ref{conj:intermediate}) is exact for infinitely many pairs $(k,\Delta)$.
By a more involved multigraph construction we give in Section~\ref{sec:isclique}, for each $k\ge 5$ and $\Delta \ge k-2$, Theorem~\ref{thm:isclique} is sharp up to an additive term of order at most quadratic in $k$. For $k\le4$, the bound still holds, but then it is not necessarily (asymptotically) sharp, e.g.~Theorem~\ref{thm:K4}.

Returning to just simple graphs, note that a minor adaptation of Theorem~\ref{thm:reduction} (see Theorem~\ref{thm:omnibusreduction} below) yields the following weaker version of Conjecture~\ref{conj:intermediate}.
\begin{thm}\label{thm:simple}
For any $K_k$-minor-free (simple) graph $G$ of maximum degree $\Delta$,
the strong clique number of $G$ satisfies $\omega_2'(G)\le (k-1)(\Delta+1)$.
\end{thm}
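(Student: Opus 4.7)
The plan is to view the strong clique $M$ as a subgraph of $G$ of maximum degree at most $\Delta$, apply Vizing's theorem to decompose $M$ into at most $\Delta+1$ matchings, and then use the $K_k$-minor-free hypothesis to bound the size of each such matching by $k-1$.

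First, I observe that the subgraph of $G$ whose edge set is $M$ has maximum degree at most $\Delta$, since every vertex is incident to at most $\Delta$ edges of $G$, hence to at most $\Delta$ edges of $M$. By Vizing's theorem, this subgraph admits a proper edge colouring with at most $\Delta+1$ colours, yielding a decomposition $M = F_1 \cup \cdots \cup F_{\Delta+1}$ in which each colour class $F_i$ is a matching of $G$.

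The crux is then the following claim: $|F_i| \le k-1$ for each $i$. Any two edges $e,e' \in F_i \subseteq M$ are pairwise vertex-disjoint (by the matching property), yet by the strong clique property they must be incident or joined by an edge of $G$; since they are vertex-disjoint, they are in fact joined by an edge. Contracting each edge of $F_i$ then turns the $|F_i|$ pairs of endpoints into $|F_i|$ vertices that are pairwise adjacent in the resulting minor, exhibiting a $K_{|F_i|}$-minor of $G$. Since $G$ is $K_k$-minor-free, this forces $|F_i| \le k-1$. Summing over colour classes gives $|M| = \sum_{i=1}^{\Delta+1}|F_i| \le (k-1)(\Delta+1)$, as required.

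There is no serious obstacle; the only non-mechanical step is the observation that a matching contained in a strong clique contracts to a clique minor, which falls out immediately from the definitions. Presumably this reasoning is the ``minor adaptation'' of Theorem~\ref{thm:reduction} that Theorem~\ref{thm:omnibusreduction} will formalise in a unified way.
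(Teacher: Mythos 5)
Your proof is correct and follows essentially the same approach as the paper's (part~\ref{cliqueratioupper} of Theorem~\ref{thm:omnibusreduction}): Vizing's theorem to split the strong clique into at most $\Delta+1$ matchings, then contracting each matching to exhibit a clique minor and bound its size by $k-1$. The paper merely states it in the slightly more general language of a class $\mathcal{G}$ closed under edge-contraction with clique number $\omega(\mathcal{G})$, which specialises to $k-1$ for $K_k$-minor-free graphs.
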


\noindent
Although at the end of the paper we discuss how to sharpen this bound slightly, already the blown-up $5$-cycle described just above shows the bound to be sharp up to a $O(\Delta)$ or $O(k)$ additive factor when $\Delta=2t=\frac45(k-\frac12)$ for odd $t$.
For relatively larger $\Delta$, specifically for $\Delta \ge k-2$, a complete bipartite graph $K_{k-2,\Delta}$ having parts of size $k-2$ and $\Delta$, to one vertex in the larger part of which is attached $\Delta-k+2$ pendant edges, is a $K_k$-minor-free graph of maximum degree $\Delta$ with strong clique number $(k-1)(\Delta-1) + 1$.

By another adaptation of Theorem~\ref{thm:reduction}, we also have the following as a corollary of Reed and Seymour's result~\cite{ReSe98} that every $K_k$-minor-free graph has fractional chromatic number at most $2(k-1)$.
The {\em fractional strong chromatic index $\chi_{2,f}'(G)$} of a graph $G$ is $\chi_f(L(G)^2)$.
Note $\omega_2'(G) \le \chi_{2,f}'(G) \le \chi_2'(G)$ always.
\begin{thm}\label{thm:simplefractional}
For any $K_k$-minor-free (simple) graph $G$ of maximum degree $\Delta$,
the fractional strong chromatic index of $G$ satisfies $\chi_{2,f}'(G)\le 2(k-1)(\Delta+1)$.
\end{thm}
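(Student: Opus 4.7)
The plan is to follow the template suggested by Theorem~\ref{thm:reduction}, namely to fractionalise the Vizing-plus-contract argument and invoke Reed and Seymour's bound~\cite{ReSe98} that $\chi_f(H) \le 2(k-1)$ for every $K_k$-minor-free graph $H$.

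First, I would apply Vizing's theorem to partition $E(G)$ into $\Delta+1$ matchings $M_1, \ldots, M_{\Delta+1}$. For each $i$, let $H_i$ be the conflict graph on vertex set $M_i$ in which two edges of $M_i$ are adjacent exactly when they are joined by an edge of $G$. The stable sets of $H_i$ are precisely the induced matchings of $G$ lying inside $M_i$.

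Second, I would observe that $H_i$ is a minor of $G$: it is obtained from $G[V(M_i)]$ by contracting every edge of $M_i$ (and forgetting multiplicities). Hence each $H_i$ is $K_k$-minor-free, so $\chi_f(H_i) \le 2(k-1)$ by~\cite{ReSe98}.

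Third, I would stitch the fractional colourings together. For each $i$, let $w_i: {\cal I}(H_i) \to {\mathbb R}^+$ be an optimal fractional colouring of $H_i$, of total weight at most $2(k-1)$; each set in its support is an induced matching of $G$. Setting $w := \sum_{i=1}^{\Delta+1} w_i$ then yields a weight function on induced matchings of $G$ of total weight at most $2(k-1)(\Delta+1)$. Any edge $e$ of $G$ belongs to exactly one $M_i$, so $\sum_{S \ni e} w(S) = \sum_{S \ni e,\, S\in {\cal I}(H_i)} w_i(S) \ge 1$, witnessing $\chi_{2,f}'(G) \le 2(k-1)(\Delta+1)$.

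The argument has no serious obstacle; it is essentially a direct assembly of Vizing's theorem, the Reed--Seymour bound, and the straightforward additivity of fractional colourings across a matching decomposition. The only point that requires any verification is the minor claim in step two, which is immediate from the construction.
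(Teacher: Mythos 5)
Your proposal is correct and follows essentially the same route as the paper: apply Vizing's theorem to partition the edges into $\Delta+1$ matchings, contract each matching to obtain a minor of $G$ (hence $K_k$-minor-free), apply the Reed--Seymour fractional colouring bound to each contracted graph, and combine the resulting fractional colourings additively. The paper carries out this exact contraction-plus-colouring scheme in Theorem~\ref{thm:omnibusreduction}\ref{fractionalratioupper} (which it notes is ``almost identical'' to the proof of part~\ref{strongratioupper}) and then invokes~\cite{ReSe98}; the only cosmetic difference is that the paper contracts $M_i$ in all of $G$ whereas you contract in $G[V(M_i)]$, both of which yield a $K_k$-minor-free minor.
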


\noindent
In fact, improving on the factor $2$ in Theorem~\ref{thm:simplefractional} is equivalent to improving on the factor $2$ in Reed and Seymour's result (see Corollary~\ref{cor:cliquefractionalreduction}).
It is natural here for us to reiterate the importance of Theorem~\ref{thm:reduction} with respect to simple graphs: Conjecture~\ref{conj:intermediate} is an unexpected and new perspective on Hadwiger's Conjecture.

\subsection*{Outline of the paper}
In Section~\ref{sec:reduction} we prove Theorems~\ref{thm:reduction},~\ref{thm:simple} and~\ref{thm:simplefractional}, thus in particular establishing close relationships between (fractional) strong chromatic index and (fractional) chromatic number in minor-closed graph classes. In Section~\ref{sec:diabolical} we construct a multigraph which asymptotically matches Conjecture~\ref{conj:main} (if true) and  Theorem~\ref{thm:main}. Section~\ref{sec:upper} is devoted to the proof of Theorem~\ref{thm:main}. In the process, we develop a reduction tool (see Lemma~\ref{lem:reduction:weighted} and Proposition~\ref{prop:fractional reduced}) that may be of independent interest. In particular, we show in Section~\ref{sec:fractional reduced} that in order to prove the fractional relaxation of Conjecture~\ref{conj:main}, it suffices to restrict our attention to submultigraphs with underlying simple subgraph consisting of a disjoint union of odd cycles and edges.
In Section~\ref{sec:K4} we prove Theorem~\ref{thm:K4} on $K_4$-minor-free multigraphs. In Section~\ref{sec:isclique}, we prove Theorem~\ref{thm:isclique} and we construct a multigraph certifying its asymptotic sharpness. We also prove variants of Theorems~\ref{thm:main} and~\ref{thm:isclique} where the condition of $K_k$-minor-freeness is relaxed to the absence of one particular type of $K_k$-minor. At the end, we revisit strong cliques in simple graphs and we discuss (nonasymptotic) strengthenings of Theorem~\ref{thm:simple}.

\section{Simple reductions}\label{sec:reduction}

We prove an omnibus generalisation of Theorems~\ref{thm:reduction},~\ref{thm:simple} and~\ref{thm:simplefractional} in  Theorem~\ref{thm:omnibusreduction} below.
Note that Theorem~\ref{thm:reduction} follows from parts~\ref{fractionalratiolower} and~\ref{strongratioupper} of Theorem~\ref{thm:omnibusreduction}.
Part~\ref{cliqueratioupper} of Theorem~\ref{thm:omnibusreduction} implies Theorem~\ref{thm:simple}, while part~\ref{cliqueratiolower} certifies that it is asymptotically sharp.
Theorem~\ref{thm:simplefractional} follows from part~\ref{fractionalratioupper} and~\cite{ReSe98}.

For any class ${\cal G}$ of graphs, let us write $\omega({\cal G})$ for the order of the largest clique maximised over the graphs in $\cal G$, and $\omega_2'({\cal G},\Delta)$ (respectively $\chi_{2,f}'({\cal G},\Delta)$)) for the largest strong clique number (respectively fractional strong chromatic index) over the graphs in $\cal G$ of maximum degree $\Delta$.

  \renewcommand{\theenumi}{$(\roman{enumi})$}
  \renewcommand{\labelenumi}{\theenumi}
\begin{thm}\label{thm:omnibusreduction}
Let ${\cal G}$ be a class of graphs.
If ${\cal G}$ is closed under contraction of edges, then
\begin{enumerate}
\item\label{cliqueratioupper} $\omega_2'({\cal G},\Delta) \le \omega({\cal G})(\Delta+1)$,
\item\label{fractionalratioupper} $\chi_{2,f}'({\cal G},\Delta) \le \chi_f({\cal G})(\Delta+1)$, and
\item\label{strongratioupper} $\chi_2'({\cal G},\Delta) \le \chi({\cal G})(\Delta+1)$.
\end{enumerate}
If ${\cal G}$ is closed under attachment of pendant edges, then

\begin{enumerate}
    \setcounter{enumi}{3}
\item\label{cliqueratiolower} $\omega_2'({\cal G},\Delta) \ge \omega({\cal G})\Delta-\binom{\omega({\cal G})}{2}$ if $\Delta > \omega({\cal G})$, and
\item\label{fractionalratiolower} $\chi_{2,f}'({\cal G},\Delta) \ge \chi_f(G)(\Delta-|V(G)|+1)$ for every $G\in {\cal G}$ with $|V(G)|< \Delta$.
\end{enumerate}
\end{thm}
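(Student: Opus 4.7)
The plan is to derive the three upper bounds (i), (ii), (iii) in a uniform Vizing-plus-contraction argument, and to exhibit the two lower bounds (iv), (v) by pendant constructions.

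Given $G \in \mathcal{G}$ of maximum degree $\Delta$, Vizing's theorem partitions $E(G)$ into at most $\Delta+1$ matchings $M_1, \dots, M_{\Delta+1}$. For each $c$ I would set $G^c := G/M_c$, which lies in $\mathcal{G}$ by closure under contraction, and which contains a vertex $\tilde e$ for every $e \in M_c$. The key observation I would first establish is: for distinct $e, f \in M_c$, the edges $e$ and $f$ are incident or joined by an edge in $G$ if and only if $\tilde e \tilde f \in E(G^c)$. Indeed, such $e, f$ can never be incident (they lie in a matching); and any edge $g$ of $G$ joining an endpoint of $e$ to an endpoint of $f$ must lie outside $M_c$ (as it shares a vertex with both $e$ and $f$, it cannot belong to the matching), so $g$ survives contraction as the edge $\tilde e \tilde f$ of $G^c$. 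From this, (i) follows by pigeonhole: any strong clique $M$ meets some $M_c$ in at least $|M|/(\Delta+1)$ edges, and those edges form a clique in $G^c$ of size at most $\omega(\mathcal{G})$. For (iii), I would properly colour each $G^c$ with at most $\chi(\mathcal{G})$ colours $\gamma_c$ and assign each $e \in M_c$ the composite colour $(c, \gamma_c(\tilde e))$; two edges of the same colour lie in a common matching and are non-adjacent in the corresponding $G^c$, so by the observation they are not strong-clique-related in $G$. For (ii), the same recipe works fractionally: a fractional colouring $w_c$ of $G^c$ of weight $\chi_f(G^c) \le \chi_f(\mathcal{G})$ on stable sets transfers to weights on the induced matchings $\{e \in M_c : \tilde e \in S\}$ of $G$, summing to at most $(\Delta+1)\chi_f(\mathcal{G})$.

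For (iv), I would fix $G^* \in \mathcal{G}$ containing a clique $K$ of size $\omega := \omega(\mathcal{G})$ whose vertices each have degree at most $\Delta$ in $G^*$, and attach pendant edges at each $v \in K$ until $\deg(v) = \Delta$; the resulting graph lies in $\mathcal{G}$ and its $\omega\Delta - \binom{\omega}{2}$ edges incident with $K$ are pairwise incident-or-joined, either sharing a vertex in $K$ or being joined via a clique edge of $K$. For (v), given $G \in \mathcal{G}$ with $|V(G)| < \Delta$, I would set $k := \Delta - |V(G)| + 1$ and attach $k$ pendant edges to each vertex of $G$ (with additional pendants at a single vertex if needed to reach maximum degree exactly $\Delta$). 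Two pendants rooted at $u, v \in V(G)$ are strong-clique-related precisely when $u = v$ or $uv \in E(G)$, so the pendants induce a copy of the lexicographic product $G[K_k]$ inside $L(G')^2$; the identity $\chi_f(G[K_k]) = k\chi_f(G)$ then yields $\chi_{2,f}'(G') \ge k\chi_f(G)$.

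The main step requiring care is the ``no shortcut'' observation for the upper bounds: without it, a joining edge could vanish into a contracted super-vertex and spoil the correspondence between strong-clique pairs within $M_c$ and edges of $G^c$. For (iv), a cosmetic point deserving a remark is the existence of a suitable $G^*$ with a clique of size $\omega$ whose vertex degrees fit under $\Delta$, which is automatic if $K_\omega \in \mathcal{G}$ and can otherwise be arranged by working from a minimal such example and attaching pendants only where needed.
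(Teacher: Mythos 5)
Your proof is correct and follows essentially the same route as the paper for all five parts: Vizing's Theorem combined with edge-contraction for the three upper bounds, and pendant-edge constructions for the two lower bounds. Your treatment of part $(v)$ via the lexicographic-product identity $\chi_f(G[K_k]) = k\,\chi_f(G)$ is a slick repackaging of the paper's direct pull-back of a fractional colouring from the pendant edges to stable sets of $G$, and your explicit ``no-shortcut'' observation (that $e,f\in M_c$ are strong-clique-related in $G$ iff $\tilde e\tilde f\in E(G^c)$, since any joining edge necessarily lies outside the matching $M_c$) makes visible a step the paper leaves implicit; the small existence caveat you flag for $(iv)$ (a graph of ${\cal G}$ realising a $K_{\omega({\cal G})}$ with suitably small degrees) is present in the paper's argument as well, which simply takes $K_{\omega({\cal G})}\in{\cal G}$ for granted.
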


\begin{proof}
We remark that the general argument for~\ref{cliqueratioupper}--\ref{strongratioupper} is essentially the aforementioned cute combination from~\cite{FSGT90}.

For~\ref{cliqueratioupper}, let $G\in {\cal G}$ be a graph of maximum degree $\Delta$ and let $X$ be a largest strong clique in $G$. By Vizing's Theorem, $X$ admits a partition into at most $\Delta+1$ matchings $M_1,\dots,M_{\Delta+1}$ of $G$. For each $i$, consider the graph $G_i$ formed from $G$ by contracting every edge of $M_i$. Since ${\cal G}$ is closed under edge-contraction, $G_i\in {\cal G}$ and so $|M_i| \le \omega(G_i)\le \omega({\cal G})$. We thus have $\omega_2'(G) = |X| \le \sum_i |M_i| \le (\Delta+1)\omega({\cal G})$, as desired.

The arguments for~\ref{fractionalratioupper} and~\ref{strongratioupper} are almost identical, so we only prove~\ref{strongratioupper}. A \emph{partial colouring} of a graph $G$ is a colouring of a subset of its vertices such that adjacent coloured vertices receive distinct colours. Similarly, a \emph{partial strong edge-colouring} of $G$ is a partial colouring of $L(G)^2$. Let $G\in {\cal G}$ be a graph of maximum degree $\Delta$. By Vizing's Theorem, $G$ admits a partition of its edges into at most $\Delta+1$ matchings $M_1,\dots,M_{\Delta+1}$. For each $i$, consider the graph $G_i$ formed from $G$ by contracting every edge of $M_i$. Since ${\cal G}$ is closed under edge-contraction, $G_i\in {\cal G}$ and so there is a proper vertex-colouring of $G_i$ with at most $\chi(G_i) \le \chi({\cal G})$ colours. The partial colouring induced by the vertices corresponding to the contracted edges when transferred directly to the associated edges of $M_i$ in $G$ corresponds to a partial strong edge-colouring of $G$. Combining these $\Delta+1$ partial strong edge-colourings on disjoint colour sets, we obtain a strong edge-colouring of all of $G$ with $(\Delta+1)\chi({\cal G})$ colours, as desired. 

For~\ref{cliqueratiolower}, write $k=\omega({\cal G})$ and consider the graph $K_{k,\Delta}$ formed from $K_k$ by attaching $\Delta-k+1$ new pendant edges to every vertex. By the assumption on ${\cal G}$ and the fact that $K_{k,\Delta}$ has maximum degree $\Delta$, $K_{k,\Delta}\in {\cal G}$ and so it has strong clique number satisfying $\omega_2'(K_{k,\Delta}) \le \omega_2'({\cal G},\Delta)$.
Since $K_{k,\Delta}$ is itself a strong clique, we have that $k(\Delta-k+1)+\binom{k}{2} = \omega_2'(K_{k,\Delta}) \le \omega_2'({\cal G},\Delta)$, as desired.

For~\ref{fractionalratiolower}, consider the graph $G_\Delta$ formed from $G$ by attaching $\Delta-|V(G)|+1$ new pendant edges to every vertex. By the assumption on ${\cal G}$ and the fact that $G_\Delta$ has maximum degree $\Delta$, $G_\Delta\in {\cal G}$ and in particular there is a weighted partition of the set of pendant edges of $G_\Delta$ into induced matchings having total weight at most $\chi_{2,f}'(G_\Delta) \le \chi_{2,f}'({\cal G},\Delta)$.
Note that the vertices of $G$ incident to the edges of each such induced matching form a stable set, and thus giving weight a $1/(\Delta-|V(G)|+1)$ fraction of the weight of each induced matching to its corresponding stable set of $G$ certifies that $\chi_f(G) \le \chi_{2,f}'({\cal G},\Delta)/(\Delta-|V(G)|+1)$, as desired.
\end{proof}

\begin{cor}\label{cor:cliquefractionalreduction}
Let ${\cal G}$ be a class of graphs that is closed under contraction of edges and under attachment of pendant edges.  Then
\[
\limsup_{\Delta\to\infty} \frac{\omega_2'({\cal G},\Delta)}{\Delta} = \omega({\cal G})
\quad\text{ and }\quad
\limsup_{\Delta\to\infty} \frac{\chi_{2,f}'({\cal G},\Delta)}{\Delta} = \chi_f({\cal G}).
\]
\end{cor}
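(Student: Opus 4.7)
The plan is to derive both equalities as routine asymptotic consequences of Theorem~\ref{thm:omnibusreduction}, since the hypotheses on $\cal G$ let us apply all five parts.

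For the upper bound direction, I would apply part~\ref{cliqueratioupper} (resp.\ part~\ref{fractionalratioupper}) of Theorem~\ref{thm:omnibusreduction} directly: dividing by $\Delta$ yields
\[
\frac{\omega_2'({\cal G},\Delta)}{\Delta} \le \omega({\cal G})\left(1+\frac{1}{\Delta}\right)
\quad\text{and}\quad
\frac{\chi_{2,f}'({\cal G},\Delta)}{\Delta} \le \chi_f({\cal G})\left(1+\frac{1}{\Delta}\right),
\]
and taking $\limsup_{\Delta\to\infty}$ gives the desired upper bounds $\omega({\cal G})$ and $\chi_f({\cal G})$ respectively.

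For the lower bound on the strong clique ratio, I would invoke part~\ref{cliqueratiolower}: for $\Delta > \omega({\cal G})$,
\[
\frac{\omega_2'({\cal G},\Delta)}{\Delta} \ge \omega({\cal G})-\frac{1}{\Delta}\binom{\omega({\cal G})}{2},
\]
and letting $\Delta\to\infty$ the error term vanishes (if $\omega({\cal G})=\infty$, then $\omega_2'({\cal G},\Delta)\ge \omega({\cal G}')\Delta - \binom{\omega({\cal G}')}{2}$ holds for every finite sub-value $\omega({\cal G}')$ achieved by some $G\in{\cal G}$, so both sides are $+\infty$). For the fractional lower bound, I would use part~\ref{fractionalratiolower}: fix any $G\in{\cal G}$; then for all $\Delta > |V(G)|$,
\[
\frac{\chi_{2,f}'({\cal G},\Delta)}{\Delta} \ge \chi_f(G)\cdot\frac{\Delta-|V(G)|+1}{\Delta},
\]
so $\limsup_{\Delta\to\infty}\chi_{2,f}'({\cal G},\Delta)/\Delta \ge \chi_f(G)$. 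Taking the supremum over all $G\in{\cal G}$ (again handling the case $\chi_f({\cal G})=\infty$ by taking graphs of arbitrarily high fractional chromatic number) gives the desired lower bound $\chi_f({\cal G})$.

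There is no genuine obstacle here — the whole argument is essentially bookkeeping — and the only mild subtlety to watch is the order of quantifiers in the fractional lower bound: the quantity $|V(G)|$ depends on the chosen graph $G$, so I must fix $G$ first and only then let $\Delta\to\infty$ before taking the supremum over $G$, which is precisely why Theorem~\ref{thm:omnibusreduction}\ref{fractionalratiolower} is stated graph-by-graph rather than as a single inequality involving $\chi_f({\cal G})$.
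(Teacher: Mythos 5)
Your proof is correct and matches the paper's intended derivation: the corollary is stated as an immediate consequence of Theorem~\ref{thm:omnibusreduction}, with the upper bounds coming from parts~\ref{cliqueratioupper} and~\ref{fractionalratioupper} and the lower bounds from parts~\ref{cliqueratiolower} and~\ref{fractionalratiolower} after dividing by $\Delta$ and letting $\Delta\to\infty$. The remark on the order of quantifiers in the fractional lower bound (fix $G$, then send $\Delta\to\infty$, then take the supremum over $G$) is exactly the right point of care, and the handling of the unbounded cases is a reasonable extra detail that the paper leaves implicit.
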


We remark here that almost the same argument as for Theorem~\ref{thm:omnibusreduction}\ref{strongratioupper} easily yields from a result of Kuhn and Osthus~\cite{KuOs03} that Conjecture~\ref{conj:intermediate} holds under the additional assumptions that $G$ has girth at least $9$ and $k$ is sufficiently large.

\section{Conjecturally extremal multigraphs}\label{sec:diabolical}

Here we describe a multigraph construction which asymptotically matches Conjecture~\ref{conj:main} (if true) and  Theorem~\ref{thm:main}.
This is inspired in part by an octahedron-based planar construction given in~\cite{LoKa19}.
It might yet be possible to improve on the strong clique number guaranteed by our construction, but only by an additive amount that is at most of quadratic order in $k$.

For each $k\ge 4$ and each $\Delta \ge k-2$ such that $\Delta+k+1$ is even, we define the multigraph $G_{k,\Delta}$ as follows.
The vertex set $V_{k,\Delta}$ is the disjoint union $V_{k,\Delta}=A\cup B\cup C\cup\{a,a',b,b',c,c'\}$, where 
$A=\{a_1,\dots,a_{k-4}\}$, $B=\{b_1,\dots,b_{k-4}\}$, $C=\{c_1,\dots,c_{k-4}\}$. 
Note that the sets $A, B$ and $C$ are empty if $k=4$. For the edge set $E_{k,\Delta}$, we have the following. For each $1\le i \le k-4$, $\{a_i,b_i,c_i\}$ induces a Shannon triangle of edge multiplicity $(\Delta-(k-3))/2$, and $a_i$ is connected by simple edges to $b$ and $c$, $b_i$ to $a$ and $c$, and $c_i$ to $a$ and $b$. In addition, $ab,bc$ and $ac$ are simple edges. Moreover, each of the vertex subsets $\{a,b,c\}$, $A$, $B$, $C$ induces a simple clique, and the edges $aa'$, $bb'$, $cc'$ are each of multiplicity $\Delta-2(k-4)-2=\Delta-2(k-3)$.
See Figure~\ref{fig:diabolical}.

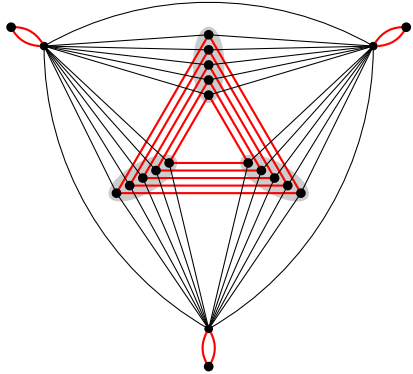
\begin{figure}[!ht]
  \begin{center}

 \begin{tikzpicture}[scale=0.1]
       \tikzstyle{v}=[black,circle,draw,fill=black,scale=0.3];
       \tikzstyle{edge}=[];
       \tikzstyle{clique}=[thick, red];
       \foreach\angle in { 90, 210, 330 } {
         \draw[gray!40,fill, rotate=\angle] (10, 0) ellipse (5 and 2);
       }
       \foreach\r in {6,8,...,14}{
         \draw[clique]
         (-30:\r)node[v](A\r){}--(90:\r)node[v](B\r){}--(210:\r)node[v](C\r){}--cycle;
       }
       \foreach\name/\angle in {C/30, B/-90, A/-210} {
         \draw[edge] (\angle:25) node[v](\name){} arc ({30+\angle}:{90+\angle}:{50*cos(30)});
         \draw[clique, bend left] (\name) to (\angle:30) node[v]{} to (\name);
       }
       \foreach\X/\Y in {A/B, B/C, C/A} {
         \foreach\r in {6,8,...,14} {
           \draw[edge] (\X) -- (\Y\r);
           \draw[edge] (\Y) -- (\X\r);
         }
       }
     \end{tikzpicture}
  \end{center}
  \caption{A schematic of the graph $G_{k,\Delta}$. The grey regions indicate cliques. The edges coloured red are of nontrivial multiplicity, indeed of roughly $\Delta/2$.}\label{fig:diabolical}
\end{figure}

It should be noted that the edges of $G_{k,\Delta}$ itself do not form a strong clique (unless $k$ equals $4$ or $5$), but a suitably large subset of its edges does, as we will now make explicit. Let $Q_{k,\Delta}$ denote the set of all edges of $G_{k,\Delta}$ except those in the three cliques induced by $A$, $B$, and $C$.
Observe that $Q_{k,\Delta}$ is a strong clique in $G_{k,\Delta}$ and 
\begin{align*}
|Q_{k,\Delta}|
&=\frac32(k-4)(\Delta-(k-3))+3(\Delta-2(k-3))+3(2(k-4)+1)\\
&=\frac32(k-2)\Delta-\frac32(k^2-7k+14).
\end{align*}

\begin{prop}\label{prop:diabolical}
For each $k\ge 4$ and each $\Delta \ge k-2$ such that $\Delta+k+1$ is even, the multigraph $G_{k,\Delta}$ is $K_k$-minor-free.
\end{prop}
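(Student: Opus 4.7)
The plan is to pass from the multigraph $G_{k,\Delta}$ to its underlying simple graph, since the property of being $K_k$-minor-free is unaffected by edge multiplicities. The pendants $a', b', c'$ are each adjacent only to $a, b, c$ respectively, so any branch set of a hypothetical minor containing $a'$ either consists only of $\{a'\}$ (and then is adjacent to at most one other branch set) or also contains $a$ (in which case $a'$ can be removed without altering connectivity or adjacencies). Hence it suffices to prove that the induced simple subgraph $H' := H - \{a',b',c'\}$, which has $3k-9$ vertices, is $K_k$-minor-free.

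The central tool will be a graph homomorphism $\pi \colon V(H') \to V(O)$ onto the octahedron $O = K_{2,2,2}$, with vertex set $\{a, b, c, \alpha, \beta, \gamma\}$ and non-edges $\{a,\alpha\}, \{b,\beta\}, \{c,\gamma\}$. Define $\pi$ by fixing $a, b, c$ and sending $A \to \alpha$, $B \to \beta$, $C \to \gamma$. A straightforward check shows that every edge of $H'$ maps either to an edge of $O$ or to a loop (one of the clique edges within $A, B$, or $C$), so $\pi$ is indeed a homomorphism.

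Now suppose for contradiction that $B_1, \ldots, B_k$ are the branch sets of a $K_k$-minor in $H'$. The projections $\pi(B_i) \subseteq V(O)$ are non-empty connected subsets, and any two $\pi(B_i), \pi(B_j)$ are either adjacent in $O$ or share a vertex. Since the preimages $\pi^{-1}(a), \pi^{-1}(b), \pi^{-1}(c)$ are singletons while $|\pi^{-1}(\alpha)| = |A| = k-4$ (and similarly for $\beta,\gamma$), any overlap between distinct projections can only occur in the ``spoke'' vertices $\{\alpha, \beta, \gamma\}$. Since $O$ is planar with Hadwiger number $4$, for $k \geq 5$ the projections cannot be pairwise disjoint in $O$: some overlap in a spoke is forced.

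The final step is to derive the contradiction from a careful structural analysis. Setting $m_v := |\{i : v \in \pi(B_i)\}|$, we have $m_a, m_b, m_c \leq 1$ and $m_\alpha, m_\beta, m_\gamma \leq k-4$. Each of the $\binom{k}{2}$ adjacencies in the minor must be witnessed by a distinct $H'$-edge, of either (i) within-clique type (for pairs of branch sets both hitting a common spoke, using edges of the cliques $A, B$, or $C$) or (ii) cross-part type (one of the three edges of the triangle $\{a,b,c\}$, a hub-to-opposite-spoke edge, or a triad matching edge $a_ib_i, a_ic_i, b_ic_i$). The main obstacle will be carrying out the analysis so that it beats a naive edge count: since $|E(H')|$ comfortably exceeds $\binom{k}{2}$ for $k \geq 5$, one must carefully partition the adjacent pairs by the structure of their projections in $O$ and match each class to witnesses from the appropriate pool. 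The key leverage points are that the cross-spoke edges form matchings of only $k-4$ edges each, and each hub vertex has no neighbour in its own spoke clique (e.g.\ $a \not\sim A$), which together strictly restrict how the $\binom{k}{2}$ witnesses can be selected and ultimately force a contradiction.
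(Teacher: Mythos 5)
Your setup is sound and genuinely different from the paper's: the reduction to the underlying simple graph, the removal of the pendants $a',b',c'$, and the homomorphism $\pi$ onto the octahedron $O=K_{2,2,2}$ are all correct (in particular you correctly noted that $a\not\sim A$, $b\not\sim B$, $c\not\sim C$, which is what makes $\pi$ a homomorphism with the correct non-edges). The observations that projections of branch sets are connected, that they must pairwise be adjacent or overlapping, that overlaps can only occur in the spoke classes $\alpha,\beta,\gamma$, and that $m_\alpha,m_\beta,m_\gamma\le k-4$ while $m_a,m_b,m_c\le 1$, are all fine. This is a different route from the paper, which instead fixes a minimal counterexample $k$ and runs an induction through a chain of structural claims.

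However, the proof stops exactly where the real work begins. Your final paragraph is an acknowledged placeholder: you write that ``one must carefully partition the adjacent pairs by the structure of their projections in $O$ and match each class to witnesses from the appropriate pool,'' and that the constraints ``ultimately force a contradiction'' --- but you never carry this out. This is not a small detail. The naive edge count fails (you correctly note $|E(H')|>\binom{k}{2}$), and the needed argument is precisely the hard part. In fact the difficulty you would run into is that nothing in your framework so far prevents a single branch set from ``straddling'' $A$ and $B$ (say, containing both $a_i$ and $b_i$) or from containing both a hub vertex and spoke vertices; such branch sets make the incidence counting fall apart because they simultaneously consume cross edges and clique edges in ways that are hard to disentangle. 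The paper's proof spends Claims~1--4 ruling out exactly these straddling configurations (e.g.\ that no branch set may contain two vertices of a Shannon triangle, and no branch set may meet both $A\cup B\cup C$ and $\{a,b,c\}$), and these claims are not elementary: they are proved by an induction on $k$ using the fact that deleting a Shannon triangle from $G_{k,\Delta}$ gives a copy of $G_{k-1,\cdot}$. Once that structure is in place, the paper's contradiction is a short convexity calculation from the ``bridges inequality'' $k_ak_b\le k-4$. Your sketch gestures toward the same key leverage (the $A$--$B$ cross edges form a matching of size only $k-4$) but without first restricting the branch set structure, there is no route from that observation to a contradiction. As written, the proof is incomplete at its crux.
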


\begin{proof}
  Assume for a contradiction that Proposition~\ref{prop:diabolical}
  does not hold and let~$k\ge 4$ be the minimal integer such
  that~$G_{k,\Delta}$ contains $K_k$ as a minor for some admissible $\Delta$.
  As the multigraph~$G_{4,\Delta}$ is a triangle with pendant multiedges,
  and therefore it has no~$K_4$ as a minor,
  we know that~$k\ge 5$.
  
  Let $S_1,\dots,S_k\subseteq V_{k,\Delta}$ be a family of disjoint connected
  sets of vertices that, when contracted, induce a~$K_k$.
  As~$G_{k,\Delta}$ is connected, we may assume that~$\bigcup_{i=1}^kS_i=V_{k,\Delta}$.
  
  For $i\in\{1,\dots,k-4\}$,
  let us write~$T_i$ for the Shannon triangle~$\{a_i,b_i,c_i\}$.
  \begin{claim}\label{claim:no triangle}
    $T_i \nsubseteq S_j$ for every $i\in\{1,\dots,k-4\}$ and $j\in\{1,\dots,k\}$.
  \end{claim}
  \begin{claimproof}
    If otherwise $T_i \subseteq S_j$ for some $i, j$,
    then the family $\sst{S_\ell}{\ell\neq j, 1\le j \le k}$ forms
    a~$K_{k-1}$ minor of~$G_{k,\Delta}\setminus T_i$. Since the underlying simple graphs of $G_{k,\Delta}\setminus T_i$ and $G_{k-1,\Delta-1}$ are isomorphic,
    this contradicts the minimality of~$k$.
  \end{claimproof}
  \begin{claim}\label{claim:no edge}
    $|T_i \cap S_j|\le 1$ for every $i\in\{1,\dots,k-4\}$ and $j\in\{1,\dots,k\}$.
  \end{claim}
  \begin{claimproof}
    We know from Claim~\ref{claim:no triangle} that $|T_i \cap S_j|\le 2$,
    so assume that~$T_i \cap S_j$ contains two elements, say $a_i$ and $b_i$.
    Assume that~$c_i\in S_{j'}$ where $j\neq j'$.

    First note that~$S_{j'}\neq\{c_i\}$ because $c_i$ has only $k-3$
    neighbours in~$V_{k,\Delta}\setminus S_{j}$ and
    there must be an edge between $S_{j'}$ and every other $S_{j''}$.
    
    Now note that every neighbour~$x$ of~$c_i$ with $x \notin \{a_i,b_i\}$ satisfies
    $ N(x)\supseteq N(c_i)\setminus S_j$.
    It follows first that the set~$S_{j'}\setminus\{c_i\}$ is connected
    and second that~$S_{j'}\setminus\{c_i\}$ is adjacent to the set~$S_\ell$
    whenever $\ell\in\{1,\dots,k\}\setminus\{j,j'\}$. (Throughout the paper, when we say that two vertex sets $X$, $Y$ are \emph{adjacent}, we mean that there exists an edge $xy$ with $x\in X$ and $y\in Y$.)

    As a consequence, the family
    $\sst{S_\ell}{\ell\neq j,\ell\neq j', 1\le j \le k}\cup\{S_{j'}\setminus\{c_i\}\}$ forms
    a~$K_{k-1}$-minor of~$G_{k,\Delta}\setminus T_i=G_{k-1,\Delta}$,
    which contradicts the minimality of~$k$.
  \end{claimproof}
  \begin{claim}\label{claim:ABC vs edges}
    For every $j\in\{1,\dots,k\}$, the set $S_j$ intersects at most one of~$A\cup B\cup C$
    and~$\{a,b,c\}$.
  \end{claim}
  \begin{claimproof}
    The proof of this claim is similar to the one of Claim~\ref{claim:no edge}.
    Assume otherwise, then by symmetry and because~$S_j$ is connected,
    we may assume that~$S_j$ contains $a$ and~$b_i$ for some~$i\in\{1,\dots,k-4\}$.
    Assume that~$a_i\in S_{j_1}$ and~$c_i\in S_{j_2}$.
    By Claim~\ref{claim:no edge}, $j$, $j_1$ and~$j_2$ are distinct.

    We aim to show that the family $\sst{S_\ell\setminus T_i}{\ell\neq j_1}$
    forms a~$K_{k-1}$-minor of~$G_{k,\Delta}\setminus T_i$,
    contradicting the minimality of~$k$.
    
    First note that~$S_{j_2}\setminus T_i = S_{j_2}\setminus\{c_i\}$
    is nonempty because $c_i$ has only~$k-3$ neighbours
    in~$V_{k,\Delta}\setminus S_{j}$.
    Moreover, every neighbour~$x$ of~$c_i$ in~$V_{k,\Delta}\setminus T_i$
    satisfies~$N(x)\supseteq N(c_i)\setminus T_i$, so $S_{j_2}\setminus\{c_i\}$
    is connected and is adjacent to every~$S_{\ell}\setminus T_i$
    for~$\ell\notin\{j_1,j_2\}$.

    Similarly, every neighbour of~$b_i$ in~$V_{k,\Delta}\setminus (T_i\cup \{a\})$
    is a neighbour of~$a$, so~$S_j\setminus T_i$ is connected and adjacent
    to every $S_\ell$ with~$\ell\notin\{j,j_1\}$.
    This suffices to show that $\sst{S_\ell\setminus T_i}{\ell\neq j_1}$
    forms a~$K_{k-1}$-minor and prove the claim.
  \end{claimproof}
  \begin{claim}\label{claim:separation}
    For every~$j\in\{1,\dots,k\}$ the set~$S_j$
    is included in one of the sets~$A$, $B$, $C$ or~$\{a,a',b,b',c,c'\}$.
  \end{claim}
  \begin{claimproof}
    We know from Claim~\ref{claim:ABC vs edges}
    that $S_j$ is a subset of~$A\cup B\cup C$ or~$\{a,a',b,b',c,c'\}$.
    In the case where $S_j\subseteq A\cup B\cup C$,
    the claim follows from Claim~\ref{claim:no edge}
    and the fact every edge between two sets of $A$, $B$ and~$C$ is part of
    a triangle~$T_i$.
  \end{claimproof}
  \begin{claim}\label{claim:S0}
    There exists~$j_0$ such that~$S_{j_0}=\{a,a',b,b',c,c'\}$.
  \end{claim}
  \begin{claimproof}
    The set $S_{j_0}$ that contains $a$ is adjacent to the set $S_j$ that contains
    $a_1$.
    By Claim~\ref{claim:separation}, this is possible only if~$S_{j_0}$
    also contains one of $b$ and~$c$.
    Applying the same argument to~$b$ and~$c$ gives the claim.
  \end{claimproof}
  Let~$k_a$, $k_b$ and~$k_c$ be the number of sets~$S_i$ included in~$A$, $B$ and~$C$, respectively.
  \begin{claim}\label{claim:bridges inequality}
    $k_ak_b\le k-4$, $k_bk_c\le k-4$ and~$k_ak_c\le k-4$.
  \end{claim}
  \begin{claimproof}
    There is at least one edge between each $S_i\subseteq A$ and~$S_j\subseteq B$,
    so there are at least~$k_ak_b$ edges between~$A$ and~$B$.
    This proves that~$k_ak_b\le k-4$. The two other cases are symmetric.
  \end{claimproof}
  We are now ready to finish the proof.
  Note that we have~$k=k_a+k_b+k_c+1$.
  As $k\ge 5$, we can assume that for instance~$k_a\ge 2$.
  By Claim~\ref{claim:bridges inequality},
  \begin{equation}\label{eq:bound on k}
    k = k_a + k_b + k_c + 1 \le k_a + 2\frac{k - 4}{k_a} + 1.
  \end{equation}
  The right side of~\eqref{eq:bound on k} is a convex function of~$k_a$,
  so it is maximum on the boundary of the domain.
  As $2\le k_a \le k-4$, it suffices for the final contradiction to show that~\eqref{eq:bound on k}
  does not hold when~$k_a\in\{2, k-4\}$.

  Indeed, if~$k_a$ is equal to~$2$ or $k-4$, then
  \[
    k_a + 2\frac{k - 4}{k_a} + 1 = k-1. \qedhere
  \]
\end{proof}

\section{A bound on strong clique number}\label{sec:upper}

This section is devoted to proving Theorem~\ref{thm:main}.
A critical tool in the proof is the following reduction.
This applies more generally whenever we wish to upper bound the strong clique number of multigraphs. It allows us to restrict our attention to multigraphs where the edges of nontrivial multiplicity induce (in the underlying simple graph) a vertex-disjoint union of odd cycles and edges.

Given a graph~$G=(V,E)$ and a weight~$w:E\to\mathbb{R}$ on the edges of~$G$,
we define $d_w(v)=\sum_{e\ni v}w(e)$ and~$\Delta(w)=\max_{v\in V}d_w(v)$.
The \emph{support} of~$w$ is the set $\supp w$
of edges~$e\in E$ for which~$w(e)\neq 0$.
If~$H$ is a subgraph of $G$, we write $w(H)$ for the sum $\sum_{e\in E(G)}w(e)$.
\begin{lemma}\label{lem:reduction:weighted}
  Let $G = (V,E)$ be a simple graph
  and $w:E \to \mathbb{R}_{0}^{+}$ a nonnegative weight on the edges of~$G$.
  There exist nonnegative weights~$w_1,\dots,w_p$ on~$E$
  such that
  \renewcommand{\theenumi}{$(\roman{enumi})$}
  \renewcommand{\labelenumi}{\theenumi}
  \begin{enumerate}
  \item\label{itm:red1}
    $\sum_{i=1}^pw_i=w$,
  \item\label{itm:red2}
    $\sum_{i=1}^p\Delta(w_i) = \Delta(w)$,
  \end{enumerate}
  and for every~$i\in\{1,\dots,p\}$, the following hold.
  \begin{enumerate}
    \setcounter{enumi}{2}
  \item\label{itm:red3}
    The support of~$w_i$ induces a vertex-disjoint union of odd cycles and edges in
    $G$.
  \item\label{itm:red4}
    For every edge $e$ in the support of~$w_i$,
    $w_i(e)=\frac1{2}{\Delta(w_i)}$ if~$e$ is part of a cycle (of odd length) of~$\supp w_i$
    and~$w_i(e)=\Delta(w_i)$ otherwise.
  \end{enumerate}
\end{lemma}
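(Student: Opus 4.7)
The plan is to recast the lemma as a statement about the fractional matching polytope of $G$, namely
\[
P(G) := \{ x \in \mathbb{R}^E_{\ge 0} : d_x(v) \le 1 \text{ for all } v \in V \}.
\]
If $\Delta(w) = 0$ I take the empty decomposition ($p = 0$); otherwise I normalise by setting $x := w/\Delta(w)$, which clearly belongs to $P(G)$. The key ingredient is the classical characterisation of the vertices of $P(G)$: every extreme point of $P(G)$ is $\{0, \tfrac12, 1\}$-valued, with its support a vertex-disjoint union of some matching edges (where the value is $1$) and some odd cycles (where the value is $\tfrac12$). In particular, every non-zero vertex $v$ of $P(G)$ satisfies $\Delta(v) = 1$, and it conforms, up to rescaling, to the structural and numerical prescriptions (iii)--(iv) of the lemma.

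Assuming this characterisation, by Carath\'eodory's theorem I would write $x = \sum_{i=1}^{p} \lambda_i v_i$ as a convex combination of vertices $v_i$ of $P(G)$ with all $\lambda_i > 0$. A short sanity check using $\Delta(x) = 1$ together with $\Delta(v_i) \le 1$ (with equality exactly when $v_i \neq 0$) forces every $v_i$ that actually appears to be non-zero: otherwise
\[
1 = \Delta(x) \le \sum_i \lambda_i \Delta(v_i) < \sum_i \lambda_i = 1,
\]
a contradiction. I then set $w_i := \lambda_i \Delta(w) \cdot v_i$, so that $\sum_i w_i = w$ and $\Delta(w_i) = \lambda_i \Delta(w)$; summing gives $\sum_i \Delta(w_i) = \Delta(w)$, establishing (i) and (ii), while (iii) and (iv) transfer directly from the vertex structure of $v_i$ upon rescaling by $\lambda_i \Delta(w)$.

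The only nontrivial ingredient is the polytope vertex characterisation, and I expect that to be the main (and essentially only) obstacle. It is a well-known fact in polyhedral combinatorics; if preferred, one can reprove it from scratch by a standard short argument: at a vertex of $P(G)$, the support of the non-integral coordinates cannot contain an even cycle (else one could perturb weights in an alternating $\pm\varepsilon$ pattern along the cycle to exhibit the vertex as the midpoint of two other feasible points), so each component of that support is a single vertex or an odd cycle. Once the characterisation is in hand, the remainder of the argument is essentially mechanical bookkeeping, and one sees that the structure demanded by (iii)--(iv) is forced by, and not merely consistent with, the extremal structure of $P(G)$.
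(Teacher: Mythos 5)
Your proof is correct, and it recognises something the paper does not state explicitly: after dividing by $\Delta(w)$, the lemma is precisely the statement that every point of the fractional matching polytope $P(G)$ decomposes as a convex combination of its vertices, combined with the classical description of those vertices as the half‑integral vectors whose support is a vertex‑disjoint union of matching edges (value~$1$) and odd cycles (value~$\tfrac12$). Your bookkeeping is also sound: $x:=w/\Delta(w)\in P(G)$ with $\Delta(x)=1$; Carath\'eodory gives $x=\sum_i\lambda_i v_i$; subadditivity of $\Delta$ and the fact that $\Delta(v_i)=1$ for nonzero $v_i$ rule out the zero vertex appearing with positive weight; and $w_i:=\lambda_i\Delta(w)v_i$ then satisfies \ref{itm:red1}--\ref{itm:red4} directly.

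The paper takes a different, self-contained route. Rather than citing the vertex structure of $P(G)$, it builds the decomposition by induction on $|\supp w|$, exhibiting four families of ``shift'' vectors $t$ with $\supp t\subseteq\supp w$ that do not raise the degree-profile at any relevant vertex: shifts along even cycles, along paths joining two odd cycles, along paths joining two degree-one vertices, and along paths joining a degree-one vertex to an odd cycle. Peeling off $w\pm mt$ strictly shrinks the support while preserving $\Delta(w_1)+\Delta(w_2)=\Delta(w)$, until the support has the required form. These four shifts are exactly the perturbations one uses to prove the polyhedral characterisation you cite (your one-sentence sketch mentions only the even-cycle perturbation; a complete from-scratch argument needs the other three as well, which is what the paper supplies). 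In short: you invoke the vertex characterisation as a black box and gain brevity; the paper re-derives it inline and gains self-containment. Both approaches are valid and closely parallel in spirit.
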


\begin{proof}
  The proof proceeds by induction on the number of edges in the support of~$w$.
  If $w$ satisfies~\ref{itm:red3} and~\ref{itm:red4}, then the trivial decomposition $w=w_1$ fits the lemma. So we may assume that $w$ does not satisfy one of~\ref{itm:red3} and~\ref{itm:red4}.
  
  It will be sufficient to show that 
  $w$ can be written as~
  \begin{equation}~\label{dec:w}
  w=w_1+w_2  
  \end{equation} 
  for some nonnegative weights $w_1$ and $w_2$ 
  such that~$\Delta(w_1)+\Delta(w_2)=\Delta(w)$ and for~$i\in\{1,2\}$,
  either~$|\supp w_i| < |\supp w|$
  or~$w_i$ satisfies~\ref{itm:red3} and~\ref{itm:red4}.

Indeed, if we succeed in showing this, then for each $i\in \{1,2\}$, it follows by induction that $w_i$ can be decomposed as $w_i=\sum_{j=1}^{p_i} w_{i_j}$ for some nonnegative weights $w_{i_1}, \ldots, w_{i_{p_i}}$ satisfying \ref{itm:red3},~\ref{itm:red4} and $\sum_{j=1}^{p_i}\Delta(w_{i_j}) = \Delta(w_i)$. Then $w_{1_1}, \ldots, w_{1_{p_1}}$ and $w_{2_1},\ldots, w_{2_{p_2}}$ together form the desired weights, finishing the proof.

Next, we argue that to obtain a decomposition as in (\ref{dec:w}), it suffices to construct a nonzero weight $t:E\to \mathbb{R}$
  (with positive and negative values) with $\supp t \subseteq \supp w$,
  that satisfies the following.
  Letting~$X_t$ be the set of vertices~$v$ with~$d_t(v)\neq 0$,
  then
  \renewcommand{\theenumi}{$(\alph{enumi})$}
  \renewcommand{\labelenumi}{\theenumi}
  \begin{enumerate}
  \item\label{it:t:degree1}
    every vertex of~$X_t$ is incident to exactly one edge of~$\supp w$; and
  \item\label{it:t:stable}
    $t(uv)=0$ whenever $uv$ is an edge with~$u,v\in X_t$.
  \end{enumerate}
  Note that $X_t$ could be empty. Before identifying the cases in which we can construct $t$, we first show that we can indeed use such a $t$ to obtain the desired decomposition of $w$ as in~(\ref{dec:w}). 
  Let~$m_1$ be the largest real number such that~$w + m_1\cdot t$ has no negative
  value, and, similarly, let~$m_2$ be the largest real number
  such that $w - m_2\cdot t$ has no negative value.
  
  To ensure that~$m_1$ and~$m_2$ are well-defined, we need to check that~$t$
  has at least one positive and one negative value.
  To see this, note that $t$ is nonzero so there is an $e\in E$ with~$t(e)\neq 0$.
  By~\ref{it:t:stable}, $e$ is incident to a vertex~$v \notin X_t$
  with $d_t(v)=0$, which implies that~$v$ is incident to at least one edge~$e'$
  such that the sign of $t(e')$ is the opposite of the sign of~$t(e)$.
  
  Now define
  \[
    w_1 = \frac{m_2}{m_1+m_2}(w + m_1t)
    \text{\quad and \quad}
    w_2 = \frac{m_1}{m_1+m_2}(w - m_2t).
  \]
  It is clear from this definition that~$w=w_1+w_2$.
  For $i\in\{1,2\}$,
  the definition of~$m_i$ also ensures that $w_i$ has no negative value
  and satisfies $\supp w_i \subsetneq \supp w$
  (recalling that $\supp t\subseteq \supp w$).
  So it remains to show that~$\Delta(w_1)+\Delta(w_2)=\Delta(w)$.

  For~$\lambda\in\{m_1,0,-m_2\}$, define $w_\lambda=w+\lambda t$.
  Note that~$\supp w_\lambda \subseteq \supp w$.
  By~\ref{it:t:degree1},
  every vertex~$u\in X_t$ is incident to exactly one edge $uv$ of~$\supp w$.
  In particular
  $d_{w_\lambda}(u) = w_\lambda(uv) \le d_{w_\lambda}(v)$.
  By~\ref{it:t:stable}, we also know that~$v\notin X_t$.
  As a consequence, the maximum degree of~$w_\lambda$ can be
  expressed as
  \[
    \Delta(w_\lambda)=\max_{v\in V\setminus X_t}d_{w_\lambda}(v)
    =\max_{v\in V\setminus X_t}(d_w(v) + \lambda\cdot d_t(v))
    =\max_{v\in V\setminus X_t}d_{w}(v),
  \]
  where the last step uses the definition of~$X_t$.
  As this last expression does not depend on~$\lambda$,
  it follows that~$\Delta(w+m_1t)=\Delta(w-m_2t)=\Delta(w)$.
  This is enough to conclude that
  $\Delta(w_1)+\Delta(w_2)=
  m_2/(m_1+m_2)\cdot\Delta(w + m_1t) + m_1/(m_1+m_2)\cdot\Delta(w - m_2t)
  =\Delta(w)$.
This concludes the proof that it suffices to construct the weight $t$.

  We now describe four cases in which we can indeed construct $t$.
  See Figure~\ref{fig:transformations} for an artist's depiction
  of these constructions.
  
  For case one, assume $\supp w$ contains an even cycle $C=v_0\dots v_{2j-1}$.
  Define $t(v_{2i}v_{2i+1})=1$ and $t(v_{2i+1}v_{2i+2})=-1$
  (with indices modulo $2j$) for every~$i\in\{0,\dots,j-2\}$,
  and $t(e)=0$ when~$e\in E \backslash C$.
  It is clear that $\supp t=C\subseteq\supp w$.
  Moreover, $X_t=\emptyset$, so~\ref{it:t:degree1} and~\ref{it:t:stable}
  are trivially satisfied.
  
  For case two,
  suppose there is a path $P$ in $\supp w$, possibly of length zero,
  that connects two distinct odd cycles $C, C'$ in $X'$.
  Assuming~$C=v_0\dots v_{2k}$, where $v_0$ is the common vertex of~$P$ and~$C$,
  set $e_0=v_{k}v_{k+1}$.
  We define the weight $t$
  on an edge $e\in P\cup C\cup C'$ depending on the parity of
  the distance $d$ from $e_0$ to $e$ through the edges of $P\cup C\cup C'$.
  Define $t(e)=(-1)^{d+1}$ if  $e\in C\cup C'$,  $t(e)=2\cdot (-1)^d$ if $e\in P$. Furthermore, define $t(e)=0$ if $e \notin P \cup C \cup C'$.
  Note that $\supp t = P\cup C\cup C'$ is indeed a subset of
  the support of~$w$.
  Again, $X_t$ is empty, so~\ref{it:t:degree1} and~\ref{it:t:stable}
  are trivial.
  
  For case three,
  suppose there is a path $P=v_0\dots v_{j}$ of length at least~$2$ in
  the support of~$w$
  such that~$v_0$ and~$v_j$ are incident to exactly one edge of~$\supp w$ each
  (namely $v_0v_1$ and~$v_{j-1}v_j$).
  Define $t(v_{i}v_{i+1})=(-1)^i$ whenever $i\in\{0,\dots,j-1\}$,
  and~$t(e)=0$ for every~$e\notin P$.
  In this case, $X_t=\{v_0, v_j\}$,
  so~\ref{it:t:degree1} is part of our assumptions, and~\ref{it:t:stable}
  is ensured by the fact that~$j\ge 2$.

  For case four, suppose that there is a path $P$ in $\supp w$ of nonzero length,
  one endpoint of which is connected to an odd cycle $C$ in $\supp w$,
  the other endpoint~$u$ of which is incident to no edge of $\supp w$ outside of $P$. Let $e_0$ be the unique edge of $P$ that is incident to $u$.
  Then define~$t(e)=-2$ if~$e\in P$ is at even distance from~$e_0$,
  and~$t(e)=2$ otherwise.
  Define~$t(e)=1$ if~$e\in C$ is at even distance from~$e_0$,
  and~$t(e)=-1$ otherwise. Furthermore, set $t(e)=0$ if $e\notin P \cup C$.
  In this case, $X_t=\{u\}$, so~\ref{it:t:degree1} is part of our assumptions
  and~\ref{it:t:stable} is trivially satisfied.

From now on we can assume that $\supp w$ contains none of the four structures described in the four cases above (otherwise we are done). From this, we will directly derive a decomposition as in~(\ref{dec:w}).
  The odd cycles in $\supp w$ are edge-disjoint,
  for otherwise case one holds.
  A connected component of the graph $(V, \supp w)$ cannot contain
  two odd cycles as otherwise case two holds.
  Moreover, a connected component of $(V, \supp w)$
  that contains a vertex incident to only one edge of~$\supp w$ is
  composed of a single edge, for otherwise
  case three or four holds.
  This verifies that $w$ satisfies Property~\ref{itm:red3}.

  Thus we know that $w$ does not satisfy~\ref{itm:red4} (otherwise we are done, as argued at the start of the proof).
  Now let~$t$ be the weight with $\supp t=\supp w$
  defined on its support by~$t(e)=1$ if~$e$ is part of an odd cycle of $\supp w$ and
  $t(e)=2$ otherwise (i.e. if~$e$ induces a connected component of $(V, \supp w)$).
  Let $m$ be the largest real number such that $m\cdot t(e)\le w(e)$
  for every $e\in E$.
  Then define
  \[
    w_1=m\cdot t \text{\quad and \quad} w_2 = w-w_1=w-m\cdot t.
  \]
  It is clear that~$w=w_1+w_2$.
  Since $d_t(u)=2$ for every~$u$ incident to an edge of $\supp w$,
  it holds that $\Delta(w_1)+\Delta(w_2)=2m + (\Delta(w) - 2m)=\Delta(w)$.
  Now, $w_1$ satisfies~\ref{itm:red3} and~\ref{itm:red4}
  and $w_2$ satisfies $|\supp w_2| < |\supp w|$
  as a consequence of the definition of~$m$.
  Thus $w_1$ and $w_2$ form a decomposition of $w$ as described in~(\ref{dec:w}), as desired.
\end{proof}

  \begin{figure}[t]
  \begin{center}
    \tikzstyle{v}=[black,circle,draw,fill=black,scale=0.5]
    \tikzstyle{clique}=[very thick]
    \begin{tabular}{p{2.5cm} p{2.5cm} p{2.5cm} p{2.5cm}}
      \begin{tikzpicture}[baseline=20]
        \draw[clique]
        (0,0)node[v]{} -- node[midway,left]{$-1$}
        (0,1)node[v]{} -- node[midway,above]{$1$}
        (1,1)node[v]{} -- node[midway,right]{$-1$}
        (1,0)node[v]{} -- node[midway,below]{$1$}
        (0,0)node[v]{};
      \end{tikzpicture} &
      \begin{tikzpicture}[baseline = -20]
        \draw[clique]
        (0,1) node[v](A){} -- node[midway, below right]{$1$}
        (.5,1.87) node[v]{} -- node[midway, above]{$-1$}
        (-.5,1.87) node[v]{} -- node[midway, below left]{$1$}
        (A) -- node[midway,left]{$-2$}
        (0,0) node[v]{} -- node[midway,left]{$2$}
        (0,-1) node[v]{} -- node[midway,above left]{$-1$}
        ($(0,-2)+(162:1)$) node[v]{} -- node[midway,left]{$1$}
        ($(0,-2)+(234:1)$) node[v]{} -- node[midway,below]{$-1$}
        ($(0,-2)+(306:1)$) node[v]{} -- node[midway,right]{$1$}
        ($(0,-2)+(18:1)$) node[v]{} -- node[midway,above right]{$-1$}
        (0,-1);
      \end{tikzpicture} & \quad
      \begin{tikzpicture}[baseline = 50]
        \draw[clique]
        (0,0)node[v, blue]{}
        -- node[midway,left]{$-1$}(0,1)node[v]{}
        -- node[midway,left]{$1$}(0,2)node[v]{}
        -- node[midway,left]{$-1$} (0,3)node[v]{}
        -- node[midway,left]{$1$} (0,4) node[v, blue]{};
      \end{tikzpicture}& 
      \begin{tikzpicture}[baseline = 0]
        \draw[clique]
        (0,-1) node[v](A){} -- node[midway, above right]{$1$}
        (.5,-1.87) node[v]{} -- node[midway, below]{$-1$}
        (-.5,-1.87) node[v]{} -- node[midway, above left]{$1$}
        (A) -- node[midway,left]{$-2$}
        (0,0) node[v]{} -- node[midway,left]{$2$}
        (0,1) node[v]{} -- node[midway,left]{$-2$}
        (0,2) node[v, blue]{};
      \end{tikzpicture}\\
    \end{tabular}
  \end{center}
  \caption{Examples of each of the four types of construction of~$t$
    in the proof of Lemma~\ref{lem:reduction:weighted}.
    Vertices in blue are adjacent to exactly one edge of the support of~$w$.}
  \label{fig:transformations}
  \end{figure}
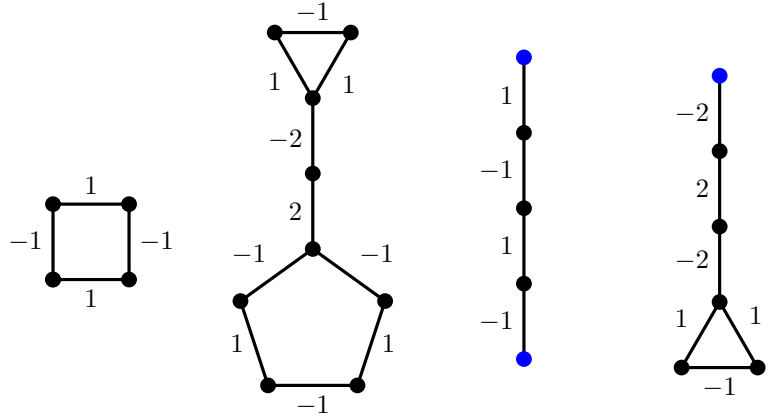

\begin{proof}[Proof of Theorem~\ref{thm:main}]
  Let $G$ be a $K_k$-minor-free multigraph of maximum degree $\Delta$.
  Let $X$ be a strong clique in $G$.

  Let~$H$ be the simple graph underlying~$G$.
  Consider the weight $w$ on the edges of~$H$ where $w(e)$ is the multiplicity
  of~$e$ in~$X$.
  It follows from this definition that $\supp(w)$ is a strong clique in $H$ with weight $w(H)=|X|$.
  Let $w_1,\dots,w_\ell$ be as guaranteed by Lemma~\ref{lem:reduction:weighted}
  upon the input of $H$ and $w$.
  Let~$j$ be the index that maximises the ratio $\frac{w_j(H)}{\Delta(w_j)}$,
  and set $t=\frac{2}{\Delta(w_j)}w_j$.
  It holds that
  \begin{align*}
    w(H)&=\sum_{i=1}^\ell w_i(H)
          =  \sum_{i=1}^\ell\frac{w_i(H)}{\Delta(w_i)}\Delta(w_i)\\
        &\le \frac{w_{j}(H)}{\Delta(w_j)}\sum_{i=1}^\ell\Delta(w_i)
          = \frac{w_j(H)}{\Delta(w_j)}\Delta(w)=\frac{\Delta(w)}{2}t(H),         
  \end{align*}
  so, using that $w(H)=|X|$ and $\Delta(w)\le\Delta$,
  \begin{equation}\label{eqn:X le t}
    |X| \le \frac{\Delta}{2}t(H).         
  \end{equation}  
  Let $Q$ be the support of~$t$. As a subset of the strong clique $\supp w$,
  the set $Q$ also induces a strong clique of $H$.
  The weight $t$ satisfies~\ref{itm:red3} and~\ref{itm:red4}
  because these properties are preserved by multiplication with a scalar.
  Since $\Delta(t)=2$,
  it follows that $Q$ is a vertex-disjoint
  union of subgraphs $\{{\cal D}_1,\dots,{\cal D}_{\ell}\}$
  that are each either a cycle
  of odd length or a single edge and that the value $t({\cal D}_i)$
  is the number of edges in ${\cal D}_i$ if ${\cal D}_i$ is an odd cycle,
  and $t({\cal D}_i)=2$ if ${\cal D}_i$ is a single edge.

  By~\eqref{eqn:X le t} it suffices to show that $t(H) \le 3(k-2)$,
  so let us assume otherwise for a contradiction. Recall that $L({\cal D}_i)$ denotes the line graph of ${\cal D}_i$ and $\alpha(L({\cal D}_i))$ denotes the size of a maximum independent set within it.
  For each $i$, let us define
  \[
    \zeta({\cal D}_i) = 3\alpha(L({\cal D}_i)) - t({\cal D}_i)
  \]
  and write $\zeta(Q)=\sum_{i=1}^{\ell} \zeta({\cal D}_i)$
  so that
  \[
    t(H) =
    \sum_{i=1}^{\ell}t({\cal D}_i) =
    \left( 3\sum_{i=1}^{\ell} \alpha(L({\cal D}_i)) \right)-\zeta(Q).
  \]
  
  If $K_2$ denotes a single edge and $C_{2j+1}$ denotes a cycle of length $2j+1$ for some $j\ge 1$, then
  \begin{align}\label{eqn:valuesshortcycles}
  \zeta(K_2)=1 \quad\text{and}\quad \zeta(C_{2j+1})=j-1.
\end{align}
  In particular, $\zeta({\cal D}_i)$ is nonnegative for each $i$ and so
  \[
    3\sum_{i=1}^{\ell} \alpha(L({\cal D}_i)) \ge t(H) >  3(k-2),
  \]
implying $\sum_{i=1}^{\ell} \alpha(L({\cal D}_i)) \ge k-1$.
A union of stable sets of the graphs $L({\cal D}_i)$ corresponds to a set of pairwise nonincident edges in the strong clique $Q$ of $H$. Thus these edges form a strong clique of $H$ as well, and so correspond also to a clique minor of $H$.
We may therefore assume that $\sum_{i=1}^{\ell} \alpha(L({\cal D}_i)) = k-1$.
This yields
\[
  t(H)= 3(k-1)-\zeta(Q) = 3(k-2)+(3-\zeta(Q)).
\]
Thus it remains only to derive a contradiction under the assumption that 
\begin{align}\label{eqn:thecase}
  \zeta(Q) < 3.
\end{align}

From~\eqref{eqn:valuesshortcycles} it follows that~\eqref{eqn:thecase} only occurs in one of the following seven cases for the composition of ${\cal D}_1,\dots,{\cal D}_{\ell}$.

\renewcommand{\theenumi}{$(\alph{enumi})$}
\renewcommand{\labelenumi}{\theenumi}
\begin{enumerate}
\item\label{allC3s} $k-1$ triangles. ($\zeta(Q)=0$.)
\item\label{oneC2} One $K_2$ and $k-2$ triangles. ($\zeta(Q)=1$.)
\item\label{oneC5} One $C_5$ and $k-3$ triangles. ($\zeta(Q)=1$.)
\item\label{twoC2} Two $K_2$ and $k-3$ triangles. ($\zeta(Q)=2$.)
\item\label{oneC2oneC5} One $K_2$, one $C_5$, and $k-4$ triangles. ($\zeta(Q)=2$.)
\item\label{twoC5} Two $C_5$ and $k-5$ triangles. ($\zeta(Q)=2$.)
\item\label{oneC7} One $C_7$ and $k-4$ triangles. ($\zeta(Q)=2$.)
\end{enumerate}
To complete the proof we will show that in each of these cases $H$ contains $K_k$ as a minor.

Since $k\geq 4$ by assumption of the theorem and because the union of ${\cal D}_1,\ldots, {\cal D}_{\ell}$ is a strong clique, in each of the cases~\ref{allC3s}--\ref{oneC7}, we either have at least one triangle or $K_k$ is a minor (or both). So from now on we may assume that there is at least one triangle. 
To efficiently use the triangles for our construction of $K_k$ as a minor, it is important to make the following simple structural observation.

\resetClaimCounter
\begin{claim}\label{clm:C3s}
For any two vertex-disjoint triangles $\tau$ and $\tau'$ in the same strong clique, one of the following two statements holds.
\renewcommand{\theenumi}{(T$\arabic{enumi}$)}
\renewcommand{\labelenumi}{\theenumi}
\begin{enumerate}
\item\label{dominate}
For one of the triangles, say $\tau$, each of its vertices is adjacent to a vertex in the other triangle $\tau'$. In this case, we say $\tau$ {\em dominates} $\tau'$.
\item\label{complete}
Some two vertices in $\tau$ and two vertices in $\tau'$ together induce a clique.
\end{enumerate}
\end{claim}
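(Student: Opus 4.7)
The plan is to reformulate the claim in terms of the bipartite auxiliary graph $B$ between $V(\tau)=\{a_1,a_2,a_3\}$ and $V(\tau')=\{b_1,b_2,b_3\}$, where $a_ib_j$ is an edge of $B$ exactly when it is an edge of $G$. Because $\tau$ and $\tau'$ are vertex-disjoint, any edge $e$ of $\tau$ is non-incident with any edge $e'$ of $\tau'$, so the strong-clique hypothesis forces some edge of $G$ joining an endpoint of $e$ to an endpoint of $e'$. Such an edge must lie in $B$. Hence the hypothesis translates into the combinatorial condition that, for every pair of $2$-subsets $\{a_i,a_j\}\subseteq V(\tau)$ and $\{b_k,b_\ell\}\subseteq V(\tau')$, at least one of the four possible bipartite edges between them belongs to $B$.

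Under this translation, \ref{dominate} says that $B$ has no isolated vertex on one side (the ``dominating'' side), while \ref{complete} says that $B$ contains a $K_{2,2}$ as a subgraph, since then the two vertices of $\tau$, the two vertices of $\tau'$, and the two triangle edges $a_ia_j$, $b_kb_\ell$ together induce a $K_4$ in $G$.

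I would then finish by a short case analysis. Assume \ref{dominate} fails. Then neither $V(\tau)$ nor $V(\tau')$ dominates the other across $B$, so up to relabelling there exist isolated vertices on both sides; say $a_1$ has no neighbour in $V(\tau')$ and $b_1$ has no neighbour in $V(\tau)$. For each of the four choices $(i,j)\in\{2,3\}^2$, apply the translated strong-clique condition to the pair of edges $a_1a_i$ and $b_1b_j$: among the four candidate bipartite edges $a_1b_1, a_1b_j, a_ib_1, a_ib_j$, the first three are excluded by the isolation of $a_1$ and $b_1$, leaving $a_ib_j\in E(B)$. Ranging $(i,j)$ over $\{2,3\}^2$ yields the complete bipartite subgraph $K_{2,2}$ on $\{a_2,a_3\}\cup\{b_2,b_3\}$, and hence \ref{complete}.

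There is no substantial obstacle here: the content is a clean reformulation and the enumeration of four forced edges. The only subtlety is to be precise about which vertices may be assumed isolated when \ref{dominate} fails on both sides simultaneously; the symmetry in $\tau$ and $\tau'$ makes this a matter of relabelling.
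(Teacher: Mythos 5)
Your proof is correct and follows essentially the same approach as the paper's: identify a vertex of $\tau$ with no neighbour in $\tau'$ and a vertex of $\tau'$ with no neighbour in $\tau$ (your isolated vertices $a_1,b_1$), then apply the strong-clique condition to the four pairs of triangle edges incident to these two vertices to force the remaining four vertices to induce a clique. The bipartite-graph framing is a cosmetic repackaging of the same argument and is fine.
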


\begin{claimproof}
If statement~\ref{dominate} does not hold, then there must be a vertex $u$ in one triangle and a vertex $v$ in the other that are not adjacent. Since the triangles are in the same strong clique, by considering in pairs the four edges of the triangles incident to the $u$ and $v$, the remaining four vertices induce a clique.
\end{claimproof}

Let ${\mathscr T}=\{\tau_1,\dots, \tau_t\}$ denote the set of triangles from $\{{\cal D}_1,\dots,{\cal D}_{\ell}\}$.
Based on the statements from Claim~\ref{clm:C3s}, we form an auxiliary mixed graph $G_{\mathscr T} = ({\mathscr T},E_{\mathscr T},A_{\mathscr T})$, so with both undirected and directed edges, from ${\mathscr T}$ as follows.
If $\tau_i$ dominates $\tau_j$ as in~\ref{dominate} then direct an edge from $\tau_i$ to $\tau_j$, i.e.~$(\tau_i,\tau_j)\in A_{\mathscr T}$.
If $\tau_i$ and $\tau_j$ satisfy statement~\ref{complete} of Claim~\ref{clm:C3s}, then include an edge between $\tau_i$ and $\tau_j$, i.e.~$\tau_i\tau_j \in E_{\mathscr T}$.
Note that for each pair $i,j$ any combination of $(\tau_i,\tau_j)\in A_{\mathscr T}$, $(\tau_j,\tau_i)\in A_{\mathscr T}$, and $\tau_i\tau_j\in E_{\mathscr T}$ apart from the empty combination (by Claim~\ref{clm:C3s}) can occur.

The rest of the proof relies on a structural partition ${\mathscr P}$ of $G_{\mathscr T}$.
Iteratively, for each $i\ge 1$, let $S_i \subseteq {\mathscr T}\setminus \cup_{j=1}^{i-1} S_j$ be an arbitrary maximal vertex subset such that, in the directed subgraph of $({\mathscr T},A_{\mathscr T})$ induced by $S_i$, from some root vertex $\rho_i$ there is a directed path to any other vertex of $S_i$. (Stop the iteration as soon as $\cup_{j\geq 1} S_j$ covers all of ${\mathscr T}$.)
Let $\Arb_i$ denote the corresponding spanning arborescence of $G_{\mathscr T}[S_i]$ rooted at $\rho_i$.
Note that each $S_i$ and $\Arb_i$ has nonzero size, since it must at least contain some $\rho_i$.
So we write ${\mathscr P}=(S_1,\dots,S_p)$ for some $1\le p\le t$.
It is of course possible for the $S_i$ to consist of $t$ singleton sets.

\begin{claim}\label{clm:roots}
$\cup_{i=1}^p\{\rho_i\}$ induces a stable set in $({\mathscr T},A_{\mathscr T})$ and a clique in $({\mathscr T},E_{\mathscr T})$.
\end{claim}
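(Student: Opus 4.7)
The plan is to show that the greedy maximality imposed on each $S_i$ forbids any arc of $A_{\mathscr T}$ between two distinct roots, and then appeal to Claim~\ref{clm:C3s} to obtain the undirected edge needed for the clique property. Fix distinct roots $\rho_i, \rho_j$ with $i<j$; since $\rho_j$ was still available at step $j > i$, we have $\rho_j \in {\mathscr T}\setminus \bigcup_{k<i} S_k$, so $\rho_j$ was a candidate for inclusion when $S_i$ was built.

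Suppose first that the arc $(\rho_i,\rho_j)$ belongs to $A_{\mathscr T}$. Then $S_i \cup \{\rho_j\}$ still admits the arborescence property with root $\rho_i$: via $\Arb_i$ the root $\rho_i$ reaches every vertex of $S_i$, and via the new arc it reaches $\rho_j$ directly. This strictly enlarges $S_i$, contradicting its maximality. Suppose instead that $(\rho_j,\rho_i)\in A_{\mathscr T}$. Then $S_i \cup \{\rho_j\}$ again enjoys the arborescence property, but now with $\rho_j$ playing the role of root: it reaches $\rho_i$ directly via the arc, and hence reaches every remaining vertex of $S_i$ by composing with $\Arb_i$. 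Once again the maximality of $S_i$ is violated. I expect this second case to be the main subtlety, as it exploits the fact that maximality is imposed on the set $S_i$ (rather than on any particular choice of root), which lets us switch the root freely when enlarging.

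With both orientations of an arc ruled out between $\rho_i$ and $\rho_j$, the dichotomy of Claim~\ref{clm:C3s} applied to these two vertex-disjoint triangles (both lying in the strong clique $Q$) forces~\ref{complete} to hold, i.e.~$\rho_i\rho_j \in E_{\mathscr T}$. Since the pair $i,j$ was arbitrary, $\bigcup_{i=1}^p \{\rho_i\}$ is both a stable set in $({\mathscr T},A_{\mathscr T})$ and a clique in $({\mathscr T},E_{\mathscr T})$, as claimed.
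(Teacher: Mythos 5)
Your proof is correct and follows the same reasoning as the paper's (terse) proof: an arc between two roots would allow extending one of the $S_i$, contradicting maximality, and the clique part then follows from Claim~\ref{clm:C3s}. Your explicit treatment of the reversed-arc case, noting that the root of $S_i$ may be swapped to $\rho_j$ when enlarging, is the right way to fill in the detail that the paper leaves implicit.
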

\begin{claimproof}
If it does not induce a stable set in $({\mathscr T},A_{\mathscr T})$, then we would have had a larger choice of subset $S_i$ for some $i$, contradicting maximality. That it induces a clique in $({\mathscr T},A_{\mathscr T})$ then follows from Claim~\ref{clm:C3s}.
\end{claimproof}

Let us next see how this structure allows us to easily dispense with case~\ref{allC3s}, and therefore with the case $\zeta(Q)=0$.
In fact for all of the cases we use the following claim.
\begin{claim}\label{clm:tree}
If there is a matching $M$ of edges from the same strong clique and a tree $T$ of the graph that is vertex-disjoint from $M$ such that some endpoint of every edge of $M$ is adjacent to a vertex in $T$, then there is a clique minor of order $|M|+1$.
\end{claim}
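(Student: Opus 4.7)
The plan is to exhibit $K_{|M|+1}$ as a minor directly by describing its $|M|+1$ branch sets. Specifically, for each edge $e \in M$, take the two endpoints of $e$ as one branch set (which is connected since $e$ is an edge of the graph), and take $V(T)$ as one additional branch set (which is connected since $T$ is a tree). This yields $|M|+1$ pairwise disjoint connected sets: the edges of $M$ share no vertex because $M$ is a matching, and they are disjoint from $V(T)$ by hypothesis.

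It then remains to verify pairwise adjacency between the branch sets in the underlying graph. First, consider two distinct edges $e, e' \in M$. Since $M$ is a matching, $e$ and $e'$ are not incident, yet they belong to the same strong clique; by definition of a strong clique this forces the existence of an edge between an endpoint of $e$ and an endpoint of $e'$, giving the required adjacency between the two corresponding branch sets. Second, consider $e \in M$ and the branch set $V(T)$. By hypothesis, some endpoint of $e$ is adjacent to a vertex of $T$, which provides the required edge between the two branch sets.

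Combining these observations, the $|M|+1$ branch sets give a $K_{|M|+1}$ minor, proving the claim. The argument is essentially mechanical once the branch sets are chosen correctly; the only nontrivial input is the use of the strong clique property to guarantee adjacency between two non-incident matching edges, and this is exactly the definition of strong clique. No real obstacle is anticipated, as the hypotheses are tailored precisely to supply the edges needed for the $|M|+1$ branch sets to form a clique minor.
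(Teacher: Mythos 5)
Your proof is correct and takes the same approach as the paper: the paper's one-line proof ("contracting each of the edges of $M$ and then contracting $T$ yields a clique subgraph") is exactly your branch-set construction, just stated more tersely. You have merely spelled out the verification of pairwise adjacency that the paper leaves implicit.
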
\label{clm:clique}
\begin{claimproof}
Since they are part of a strong clique, contracting each of the edges of $M$ and then contracting $T$ yields a clique minor.
\end{claimproof}

Suppose now that we are in case~\ref{allC3s} and so $t=k-1$. Given ${\mathscr P}$, let us first consider the rooted triangles consecutively. For each $1\le i < p$, note by Claim~\ref{clm:roots} that $\rho_i$ and $\rho_{i+1}$ satisfy statement~\ref{complete}, so let $a_i,b_i$ in $\rho_i$ and $c_{i+1},d_{i+1}$ in $\rho_{i+1}$ denote the four corresponding vertices of the clique as in the statement of~\ref{complete}. By the pigeonhole principle we may assume by symmetry that $a_i = c_i$ for all $1<i<p$; thus, $P_{p}=a_1a_2\dots a_{p-1}c_p$ is a path in $H$. For each $1\le i<p$, we form a tree in the subgraph of $H$ induced by the vertices of all the triangles in $S_i$ as follows. We let $h(\rho_i) = a_i$ and from $\rho_i$ we explore the spanning arborescence $\Arb_i$ of $S_i$ (say, by depth-first search), and when we explore a directed edge $(\tau,\tau')$, where $\tau$ denotes the already explored vertex, we let $h(\tau')$ be any neighbour of $h(\tau)$ in the triangle $\tau'$ (which is guaranteed to exist by statement~\ref{dominate}). The set $\cup_{\tau\in S_i}\{h(\tau)\}$ induces a tree $T_i$ in $H$ that includes exactly one vertex from every triangle in $S_i$ and is rooted, say, at $a_i$. In the same way, we construct a tree $T_p$ in $H$ that includes exactly one vertex from every triangle in $S_p$ and is rooted at $c_p$. (If $p=1$, then $c_p$ is an arbitrary vertex of $\rho_p$.) Let $T$ be the tree in $H$ formed by appending the trees $T_1,\dots,T_p$ to the path $P_{p}$. This tree includes exactly one vertex from every triangle in ${\mathscr T}$. For each triangle in ${\mathscr T}$, add the edge that is not intersected by $T$ to the matching $M$ of $Q$. Thus $|M|=t=k-1$ and $T$ and $M$ satisfy the hypothesis of Claim~\ref{clm:tree}, so $H$ contains a clique minor of order $k$, a contradiction.

The cases~\ref{oneC2}--\ref{twoC5} are handled nearly the same way with the addition of one more structural observation.

\begin{claim}\label{clm:twoedges}
Let $\tau$ be a triangle and ${\cal D}$ some nontrivial cycle vertex-disjoint from $\tau$. If $\tau$ and ${\cal D}$ are in the same strong clique, then there must be at least two vertices of $\tau$ with a neighbour in ${\cal D}$.
\end{claim}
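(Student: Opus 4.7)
The plan is to argue by contradiction: I will suppose that at most one vertex of $\tau$ --- call it $v$, if it exists --- has a neighbour in ${\cal D}$, so that the two remaining vertices $u_1,u_2$ of $\tau$ have no neighbour in ${\cal D}$ at all. The goal is then to derive a contradiction by applying the strong clique condition to the single edge $u_1u_2$ of $\tau$.

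The key step will be to observe that since $\tau$ and ${\cal D}$ lie in the same strong clique, the edge $u_1u_2$ must be either incident to, or joined by an edge of $H$ to, every edge of ${\cal D}$. Vertex-disjointness of $\tau$ and ${\cal D}$ immediately rules out the incidence option, so for each edge $xy$ of ${\cal D}$ there would have to exist an edge of $H$ between $\{u_1,u_2\}$ and $\{x,y\}$. Any such edge of $H$ exhibits a neighbour of $u_1$ or $u_2$ in ${\cal D}$, directly contradicting the standing assumption. Since ${\cal D}$ is a nontrivial cycle it has at least one edge, so a single application of this reasoning will suffice.

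I do not anticipate any serious obstacle: the argument does not use the length of ${\cal D}$ beyond the existence of one edge, and it works identically whether zero or exactly one vertex of $\tau$ has a neighbour in ${\cal D}$. The only thing to be careful about is selecting the correct edge of $\tau$ to work with, namely the edge $u_1u_2$ opposite to the putative special vertex $v$, so that neither of its endpoints has any neighbour in ${\cal D}$ under the contradiction hypothesis; with that choice the strong clique condition collapses into the desired contradiction.
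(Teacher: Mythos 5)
Your argument is correct and is essentially the same as the paper's: you isolate the edge of $\tau$ whose two endpoints (under the contradiction hypothesis) have no neighbour in ${\cal D}$, and observe that the strong clique condition applied to that edge and any edge of ${\cal D}$ immediately yields a contradiction. The paper states this in two sentences; you simply spell out the incidence-versus-joined-by-an-edge dichotomy in more detail.
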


\begin{claimproof}
If not, let $u,v$ be distinct vertices of $\tau$ that do not have a neighbour in ${\cal D}$. This contradicts that the edge $uv$ and ${\cal D}$ are in the same strong clique.
\end{claimproof}

Assume we are in case~\ref{oneC2} or~\ref{oneC5} and assume that ${\cal D}_1$ is $K_2$ or the cycle of length five.
Now note that either $a_1$ or $b_1$ can play the role of the root of the tree $T$ constructed in case~\ref{allC3s}. By the pigeonhole principle, Claim~\ref{clm:twoedges} allows us to assume by symmetry that $a_1$ has a neighbour $u$ in ${\cal D}_1$. We let $T$ and $M$ be as constructed from ${\mathscr P}$ as in case~\ref{allC3s}.
In case~\ref{oneC2}, we add the edge of ${\cal D}_1$ to $M$.
In case~\ref{oneC5}, we append the edge $a_1u$ to $T$ and add the two independent edges of ${\cal D}_1-\{u\}$ to $M$.
Note that if $p=1$, we instead take $a_1=c_p$ in the above determinations.
In either case, $|M|=k-1$ and $T$ and $M$ satisfy the hypothesis of Claim~\ref{clm:tree}, which leads to a clique minor of order $k$.

Assume we are in one of cases~\ref{twoC2}--\ref{twoC5} and assume that ${\cal D}_1$ and ${\cal D}_2$ are the two non-triangles.
Now note that $c_p$ and $d_p$ are symmetric in the construction of the tree $T$ in case~\ref{allC3s}. As in the last two cases, we may assume that $a_1$ has a neighbour $u$ in ${\cal D}_1$. In the same way, we may assume that $c_p$ has a neighbour $v$ in ${\cal D}_2$. We let $T$ and $M$ be as constructed from ${\mathscr P}$ as in case~\ref{allC3s}.
In case~\ref{twoC2}, we add the two edges of ${\cal D}_1$ and ${\cal D}_2$ to $M$.
In case~\ref{oneC2oneC5}, we append the edge $c_pv$ to $T$ and add the edge of ${\cal D}_1$ and the two independent edges of ${\cal D}_2-\{v\}$ to $M$.
In case~\ref{twoC5}, we append the edges $a_1u$ and $c_pv$ to $T$ and add the four independent edges of ${\cal D}_1-\{u\}$ and ${\cal D}_2-\{v\}$ to $M$.
Note that if $p=1$, we may instead assume in the above determinations that $a_1=c_p$ is the common vertex of $\rho_1$ that has a neighbour in both ${\cal D}_1$ and ${\cal D}_2$.
In all cases, $|M|=k-1$ and $T$ and $M$ satisfy the hypothesis of Claim~\ref{clm:tree}, which leads to a clique minor of order $k$.

The proof of case~\ref{oneC7} is the most involved and for this we require the following structural observation.

\begin{claim}\label{clm:oneC3toC7}
Let $\tau$ be a triangle and ${\cal D}$ a cycle of length seven that is vertex-disjoint from $\tau$. Suppose $\tau$ and ${\cal D}$ are in the same strong clique and that there are two vertices $w,x$ of $\tau$ which each have no two neighbours that are at distance exactly three on ${\cal D}$.
Then the neighbours of $w$ in ${\cal D}$ form an interval of three consecutive vertices on ${\cal D}$ possibly less the middle vertex of the interval, the same is true for $x$, and the intervals of ${\cal D}$ corresponding to $w$ and $x$ are vertex-disjoint (and therefore adjacent in ${\cal D}$).
\end{claim}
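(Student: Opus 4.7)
I would set $W := N(w) \cap V({\cal D})$ and $X := N(x) \cap V({\cal D})$ and label the vertices of ${\cal D}$ cyclically as $v_0, v_1, \dots, v_6$. Since $\tau$ and ${\cal D}$ are vertex-disjoint and lie in a common strong clique, the edge $wx\in\tau$ and each edge of ${\cal D}$ must be connected by an edge in $G$; hence $W\cup X$ is a vertex cover of ${\cal D}$, so $|W\cup X| \ge 4$ (the vertex-cover number of $C_7$).

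Next I would analyse the distance-$3$ structure of ${\cal D}$. The auxiliary graph $D_3$ on $V({\cal D})$ whose edges are the pairs of vertices at cyclic distance exactly $3$ on~${\cal D}$ is $2$-regular and connected since $\gcd(3,7)=1$, hence is itself a $7$-cycle. By hypothesis $W$ and $X$ are independent sets of $D_3$, so $|W|,|X|\le\alpha(C_7)=3$. A brief enumeration shows that the size-$3$ independent sets of $D_3$ are exactly the triples of three consecutive vertices on~${\cal D}$, and the size-$2$ independent sets are exactly the pairs of vertices at cyclic distance $1$ or $2$ on~${\cal D}$.

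Combining these, I would case-analyse $(|W|,|X|)$ under the constraint $|W\cup X|\ge 4$. The crucial step is to rule out $W$ (or $X$) being a pair of cyclically adjacent vertices on~${\cal D}$: I would appeal to the additional strong-clique constraints on the other two edges $wu, xu$ of $\tau$ against the edges of~${\cal D}$, which force $W\cup U$ and $X\cup U$ to be vertex covers of~${\cal D}$ as well (where $U = N(u)\cap V({\cal D})$ and $u$ is the third vertex of~$\tau$). A short calculation then shows that $W=\{v_i,v_{i+1}\}$ forces $U$ to cover every edge of~${\cal D}$ not simultaneously incident to $W$ and to~$X$, which, combined with the constraints on~$U$ available in the intended application of case~\ref{oneC7}, produces a contradiction. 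Once adjacent pairs are ruled out, $W$ and~$X$ each have the ``interval of three possibly less middle'' shape, and a final alignment check of the two ambient $3$-intervals against the vertex-cover condition forces the intervals to be vertex-disjoint and cyclically adjacent on~${\cal D}$.

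The main obstacle is the adjacent-pair rule-out: on the face of it, configurations such as $W=\{v_0,v_1\}$ paired with $X=\{v_3,v_5\}$ satisfy both the vertex-cover condition and the distance-$3$-free hypothesis on $W$ and~$X$ individually. Excluding them relies on extracting extra structural information about the third vertex $u$ of~$\tau$—namely the vertex-cover conditions imposed on $N(u)\cap V({\cal D})$ by the other two edges of~$\tau$—so the proof must harness the full interaction between~$\tau$ and~${\cal D}$, not merely the behaviour of the single edge~$wx$.
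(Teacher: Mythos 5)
Your framework (reduce to the vertex-cover condition on $W\cup X$ implied by the edge $wx$ being in the same strong clique as every edge of ${\cal D}$, observe that the ``no distance-$3$ pair'' condition makes $W$ and $X$ independent sets of the auxiliary distance-$3$ $7$-cycle, and enumerate) is exactly the paper's argument made explicit; the paper's one-line proof only states the vertex-cover consequence and leaves the case analysis implicit. You are also right to flag that the vertex-cover condition together with the distance-$3$-free hypotheses does \emph{not} by itself force the stated form: the configuration $W=\{v_0,v_1\}$, $X=\{v_3,v_5\}$ indeed satisfies all of the claim's hypotheses, yet $W$ is a pair of cyclically adjacent vertices and so is not ``a $3$-interval possibly less its middle''. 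So the claim, read literally, is slightly over-stated, and the paper's one-sentence proof (which asserts that failure of the conclusion yields an uncovered edge) glosses over this.

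Where your proposal goes astray is in the attempted fix. Appealing to the third vertex $u$ of $\tau$ cannot rule out the adjacent-pair configurations: the claim places no distance-$3$-freeness hypothesis on $u$, so $U=N(u)\cap V({\cal D})$ is unconstrained (it could be all of $V({\cal D})$), and then $W\cup U$ and $X\cup U$ being vertex covers carries no information. Nor is the extra information ``available in the intended application of case~\ref{oneC7}'' of the right type: there too, the third vertex of $\rho_1$ (resp.\ $\rho_p$) is not assumed to avoid distance-$3$ pairs. The correct resolution is instead to note that the adjacent-pair case does not need to be excluded: in every configuration satisfying the hypotheses, $W$ and $X$ are still contained in two \emph{disjoint} $3$-intervals of ${\cal D}$, which is all the subsequent argument in case~\ref{oneC7} uses. (Indeed, $|W\cup X|\ge 4$ and $|N(c_p)\cup N(d_p)|\ge 4$ already guarantee a distance-$3$ hit across the two triangles by a counting argument, since each vertex of $C_7$ has exactly two others at distance $3$.) So the right move is to loosen the claim's conclusion, not to hunt for additional constraints that would rule the adjacent-pair case out.
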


\begin{claimproof}
If the conclusion of the claim does not hold, then there is an edge of ${\cal D}$ that has no edge from its endpoints to $w$ or $x$. This contradicts that such an edge and $wx$ are in the same strong clique.
\end{claimproof}

\noindent
Note that from the conclusion of Claim~\ref{clm:oneC3toC7}, we may also conclude that every triangle $\tau$ has at least one vertex with two neighbours at distance exactly three on the seven-cycle ${\cal D}$.

Assume we are in case~\ref{oneC7} and assume that ${\cal D}_1$ is the cycle of length seven.
If $p=1$, then the vertex $c_p=c_1$ in the construction of $T$ in case~\ref{allC3s} is arbitrarily chosen in $\tau_1$, and thus by Claim~\ref{clm:oneC3toC7} and up to symmetry we may assume that $c_p$ has two neighbours $u,v$ in ${\cal D}_1$ that are at distance exactly three on ${\cal D}_1$.
If $p>1$, then just as in cases~\ref{twoC2}--\ref{twoC5} we can note that the roles of $a_1$, $b_1$, $c_p$, and $d_p$ are symmetric in the construction of the tree $T$ in case~\ref{allC3s}.
If any of $a_1$, $b_1$, $c_p$, $d_p$ is adjacent to two neighbours $u,v$ at distance exactly three on ${\cal D}_1$, then by symmetry we may conclude that it is $c_p$ and conclude exactly as in the case $p=1$.
Therefore we can assume otherwise, so the two pairs $w=a_1$, $x=b_1$ and $w=c_p$, $x=d_p$ satisfy the conclusion of Claim~\ref{clm:oneC3toC7}.
From this structure we may assume without loss of generality that there are two vertices $u,v$ at distance exactly three on ${\cal D}_1$ such 
that $a_1v$ and $c_pu$ are edges of $H$.
In either of the cases $p=1$ or $p>1$, we let $T$ and $M$ be as constructed from ${\mathscr P}$ as in case~\ref{allC3s}.
Then we append the edge $c_pu$ to $T$ and add the three independent edges of ${\cal D}_1-\{u\}$ to $M$.
Then $|M|=k-1$ and $T$ and $M$ satisfy the hypothesis of Claim~\ref{clm:tree}, which leads to a clique minor of order $k$.
This concludes the proof.
\end{proof}

\section{A conditional fractional bound}\label{sec:fractional reduced}

Based on Lemma~\ref{lem:reduction:weighted}, we next present a seemingly innocent problem. 

\begin{conjecture}\label{conj:fractional reduced}
  Let $k\ge 4$.
  Let $G=(V,E)$ be a $K_k$-minor-free multigraph and $A\subseteq E$
  be a vertex-disjoint union of odd cycles and double edges in $G$.
  Then the fractional chromatic number of the subgraph of
  $L(G)^2$ induced by $A$ satisfies
  $\chi_f(L(G)^2[A]) \le 3(k-2)$.
\end{conjecture}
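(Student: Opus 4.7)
My plan is to lift the proof of Theorem~\ref{thm:main} from the integral (strong clique) setting to the fractional setting via LP duality. By LP duality,
\[
\chi_f(L(G)^2[A]) = \max \Bigl\{\, \sum_{e \in A} y_e : y\ge 0,\ \sum_{e \in S} y_e \le 1 \text{ for every induced matching } S \subseteq A \,\Bigr\}.
\]
I would fix an extremal $y$ achieving this maximum; the task reduces to proving $\sum_{e \in A} y_e \le 3(k-2)$.

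My first step would be a componentwise analysis. Let $\sigma_i := \sum_{e \in D_i} y_e$ for each component $D_i$ of $A$. One can control each $\sigma_i$ by the induced-matching structure local to $D_i$: the LP restricted to $D_i$ gives $\sigma_i \le \chi_f(L(G)^2[D_i])$, which for a double edge is $2$, for a triangle is $3$, and for a longer odd cycle $C_{2j+1}$ (without shortcuts in $G$) is $\chi_f(C_{2j+1}^2)$, i.e.~$5$ if $j=2$ and at most $7/2$ if $j\ge 3$. Were the components entirely non-interacting, induced matchings of $A$ would decompose over the $D_i$, whence $\chi_f(L(G)^2[A]) = \max_i \chi_f(L(G)^2[D_i]) \le 5 \le 3(k-2)$ for every $k\ge 4$, dispensing with that case.

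The crux is therefore interacting components. Here I would generalise the minor-extraction argument of Theorem~\ref{thm:main} — specifically Claims~\ref{clm:C3s}--\ref{clm:oneC3toC7} together with the tree-plus-matching construction of Claim~\ref{clm:tree} — to the weighted regime. The idea is to use $y$ to guide the choice of per-component root and of spanning tree, routing them through vertices of large local $y$-mass, and to argue that whenever $\sum_i \sigma_i > 3(k-2)$ one can always assemble a $K_k$-minor from a matching of $y$-heavy edges interconnected by such a tree, contradicting $K_k$-minor-freeness.

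The hard part will be this last step. The integral proof of Theorem~\ref{thm:main} hinges on the discrete constraint $\zeta(Q) < 3$, which reduces the analysis to only the seven tractable configurations (cases (a)--(g) in that proof); in the fractional regime $y$ may spread continuously across arbitrarily many components in arbitrary proportions, and that discrete case analysis collapses. The main obstacle is producing a unified weighted clique-minor construction tightening to exactly $3(k-2)$ rather than to the factor-roughly-two slack incurred by cruder approaches — for instance, decomposing $A$ into three matchings by Vizing (using $\Delta(A)\le 2$ and maximum multiplicity $\le 2$), contracting each in turn in $G$, and invoking Reed--Seymour's fractional Hadwiger bound $\chi_f \le 2(k-1)$ on each resulting $K_k$-minor-free graph; this route yields only $\chi_f(L(G)^2[A]) \le 6(k-1)$, missing the target by a factor approaching $2$.
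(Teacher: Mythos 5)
This statement is a \emph{conjecture} in the paper, not a theorem: the paper presents Conjecture~\ref{conj:fractional reduced} as open, and what it actually proves, in Proposition~\ref{prop:fractional reduced}, is that this conjecture is \emph{equivalent} to the fractional relaxation of Conjecture~\ref{conj:main} (itself open). There is therefore no paper proof against which to compare your attempt, and your write-up, read carefully, does not claim to be a proof either --- it is an outline that honestly flags the gap.

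Your diagnosis of the gap is essentially correct and matches why the authors left this open. The proof of Theorem~\ref{thm:main} works because the quantity $\zeta(Q)$ is a nonnegative \emph{integer}, so the bound $\zeta(Q)<3$ leaves only the seven component profiles enumerated as cases~(a)--(g), each of which can be killed by an explicit tree-plus-matching $K_k$-minor. A fractional dual solution $y$ is not quantized in this way, so the residual argument does not reduce to a finite case analysis, and no weighted analogue of that step is provided in the paper. Your remark that Vizing on $A$ followed by Reed--Seymour gives only about $6(k-1)$ is also correct and is exactly in the spirit of the paper's comment after Theorem~\ref{thm:simplefractional} that improving the factor $2$ there is equivalent to improving Reed--Seymour. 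Two small inaccuracies in your componentwise step are worth fixing if you pursue this: first, $\chi_f(L(C_{2j+1})^2)=\chi_f(C_{2j+1}^2)=(2j+1)/\lfloor(2j+1)/3\rfloor$, which equals $11/3>7/2$ at $j=5$, so your claimed bound of $7/2$ for all $j\ge 3$ is false (the bound $\le 5$ you actually rely on is fine); second, your ``non-interacting'' base case should be stated as the components of $A$ lying in distinct connected components of $L(G)^2[A]$, since that is the condition under which the fractional chromatic number really does reduce to the maximum over components. Neither fixes the central difficulty, which remains open.
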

  Note that the bound in Conjecture~\ref{conj:fractional reduced} corresponds to
  the bound in Conjecture~\ref{conj:main} instanced with
  the maximum degree of the multigraph $(V,A)$, that is, $\Delta=2$.
  The following statement applied with $H$ as any $K_k$-minor-free graph
  and $\lambda=\frac{3}{2}(k-2)$ shows that
  Conjecture~\ref{conj:fractional reduced} is equivalent to the fractional
  relaxation of Conjecture~\ref{conj:main}.
  \begin{prop}\label{prop:fractional reduced}
    Let $H$ be a (simple) graph and let $\lambda$ be a real number.
    The following two statements are equivalent.
    \renewcommand{\labelenumi}{$(\roman{enumi})$}
    \begin{enumerate}
    \item\label{it:fractional conjecture}
      For every multigraph~$G$ with underlying simple graph $H$,
      $\chi_{2,f}'(G)\le \Delta(G) \cdot \lambda$.
    \item\label{it:fractional reduced}
      For every multigraph~$G$ with underlying simple graph $H$
      and every set of edges $A\subseteq E(G)$ that is a
      vertex-disjoint union of odd cycles and double edges in $G$,
      $\chi_{f}(L(G)^2[A]) \le 2\lambda$.
    \end{enumerate}
  \end{prop}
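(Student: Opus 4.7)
The plan is to prove both implications separately, using a clique blow-up construction for $(i) \Rightarrow (ii)$ and Lemma~\ref{lem:reduction:weighted} for $(ii) \Rightarrow (i)$. The common technical input is that, for any multigraph $G'$ with underlying simple graph $H$ and multiplicity function $\mu : E(H) \to \mathbb{N}$, the graph $L(G')^2$ is the clique blow-up of $L(H)^2$ in which each vertex $e$ is blown up to a clique of size $\mu(e)$, so that $\chi_{2,f}'(G') = \chi_f^\mu(L(H)^2)$, the weighted fractional chromatic number of $L(H)^2$ with vertex weights $\mu$. The main technical point in both directions is verifying that these blow-up descriptions are accurate, which follows from the observation that whether two copies of distinct simple edges are adjacent in $L(G)^2$ is determined only by the underlying simple graph, while parallel copies of the same simple edge always form a clique.

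For $(i) \Rightarrow (ii)$, I would apply a blow-up trick. Given $G$ and $A$ as in $(ii)$ and an integer $N \ge 1$, define $G_N$ to be the multigraph with underlying simple graph $H$ obtained from $G$ by replacing each edge $e \in A$ by $N$ parallel copies, keeping all other edges of $G$ unchanged, and let $A_N \subseteq E(G_N)$ denote the resulting $N|A|$ edges. The preceding observation then gives that $L(G_N)^2[A_N]$ is the clique blow-up of $L(G)^2[A]$ by factor $N$, so $\chi_f(L(G_N)^2[A_N]) = N \chi_f(L(G)^2[A])$. Since each vertex has degree at most $2$ in $A$, one has $\Delta(G_N) \le \Delta(G) + 2(N-1)$. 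Applying $(i)$ to $G_N$ and using the bound $\chi_f(L(G_N)^2[A_N]) \le \chi_{2,f}'(G_N)$, I obtain $N \chi_f(L(G)^2[A]) \le \lambda(\Delta(G) + 2(N-1))$; dividing by $N$ and letting $N \to \infty$ yields $\chi_f(L(G)^2[A]) \le 2\lambda$.

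For $(ii) \Rightarrow (i)$, given $G'$ with underlying simple graph $H$, I would apply Lemma~\ref{lem:reduction:weighted} to its multiplicity function $\mu_{G'}$ to obtain a decomposition $\mu_{G'} = \sum_{i=1}^p w_i$ with $\sum_i \Delta(w_i) = \Delta(G')$, where each $w_i$ is supported on a vertex-disjoint union of odd cycles and single edges in $H$ and takes value $\Delta(w_i)/2$ on cycle edges and $\Delta(w_i)$ on single edges. By subadditivity of the weighted fractional chromatic number, $\chi_{2,f}'(G') = \chi_f^{\mu_{G'}}(L(H)^2) \le \sum_i \chi_f^{w_i}(L(H)^2)$, so it suffices to prove $\chi_f^{w_i}(L(H)^2) \le \lambda \Delta(w_i)$ for each $i$. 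Fixing $i$, I would construct a multigraph $G_i$ with underlying simple graph $H$ by doubling every edge in a single-edge component of $\supp w_i$ and keeping every other edge of $H$ with multiplicity one, and choose $A_i \subseteq E(G_i)$ to consist of both copies of each doubled edge together with the single copy of each cycle-component edge. Then $A_i$ is a vertex-disjoint union of odd cycles and double edges in $G_i$, so by $(ii)$ we have $\chi_f(L(G_i)^2[A_i]) \le 2\lambda$. The same blow-up observation shows that $L(G_i)^2[A_i]$ is the clique blow-up of $L(H)^2[\supp w_i]$ with blow-up weight $w_i'(e) = 1$ on cycle edges and $w_i'(e) = 2$ on single edges, so $\chi_f(L(G_i)^2[A_i]) = \chi_f^{w_i'}(L(H)^2[\supp w_i])$. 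Since $w_i = (\Delta(w_i)/2)\, w_i'$ on $\supp w_i$, linearity gives $\chi_f^{w_i}(L(H)^2) = (\Delta(w_i)/2)\, \chi_f^{w_i'}(L(H)^2[\supp w_i]) \le \lambda \Delta(w_i)$, and summing over $i$ completes the argument.
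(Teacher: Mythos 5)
Your proof is correct and follows essentially the same route as the paper's. In the direction $(ii)\Rightarrow(i)$ you apply Lemma~\ref{lem:reduction:weighted} to the multiplicity function, build the same auxiliary multigraphs $G_i$ with the doubled single edges and the single-copy cycle edges, and recombine; the paper does this by explicitly adding up the functions $\widetilde{c}_i:\mathcal{I}(L(H)^2)\to\mathbb{R}^+$ with weights $\Delta(w_i)/2$, whereas you package the same computation as subadditivity and positive homogeneity of the weighted fractional chromatic number of $L(H)^2$, together with the observation that $L(G')^2$ is a clique blow-up of $L(H)^2$. In the direction $(i)\Rightarrow(ii)$ you blow each $A$-edge up $N$-fold, exactly as the paper blows them up $D$-fold, and take the limit; your clique blow-up framing makes the identity $\chi_f(L(G_N)^2[A_N])=N\chi_f(L(G)^2[A])$ explicit, which the paper handles implicitly by rescaling $\widetilde{c}_D$. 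So the two proofs coincide in substance, and your blow-up/weighted-$\chi_f$ language is merely a cleaner way of stating the paper's correspondence between fractional colourings of $L(G)^2$ and weighted fractional colourings of $L(H)^2$.
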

  \begin{proof}
    The proof relies on Lemma~\ref{lem:reduction:weighted} and
    the following correspondence.
    Given a multigraph $G$ with underlying simple graph $H$,
    the graph $L(G)^2$ has a fractional $r$-coloring
    $c:{\cal I}(L(G)^2)\to\mathbb{R}_{0}^{+}$
    if and only if there is a function
    $\widetilde{c}:{\cal I}(L(H)^2)\to\mathbb{R}_{0}^{+}$
    of the same total sum $r$, such that $\sum_{I\ni e}\widetilde{c}(I)$
    is the multiplicity of $e$ in $G$ for every $e\in E(H)$.
    
    We show first that~\ref{it:fractional reduced} implies~\ref{it:fractional conjecture}.
    Fix a multigraph~$G$ with underlying simple graph $H$.
    Let $w:E(H)\to\mathbb{N}$ be the function that assigns to each edge
    of~$H$ its multiplicity in~$G$.
    Let $w=\sum_{i=1}^pw_i$ be the decomposition provided by
    Lemma~\ref{lem:reduction:weighted} upon the input of $H$ and $w$.
    For every $i\in\{1,\dots,p\}$,
    consider the multigraph
    $G_i$ with underlying simple graph $H$, where each edge~$e\in E(H)$
    has multiplicity $\max(\frac{2}{\Delta(w_i)}w_i(e), 1)$.
    Property~\ref{itm:red4} of Lemma~\ref{lem:reduction:weighted}
    ensures that $\frac{2}{\Delta(w_i)}w_i(e)$ is
    an element of $\{0,1,2\}$.    
    Let $A_i$ be the subset of edges of $G_i$ containing exactly
    $\frac{2}{\Delta(w_i)}w_i(e)$ copies of the edge~$e$ for every $e\in E(H)$.
    By Properties~\ref{itm:red3} and~\ref{itm:red4} of Lemma~\ref{lem:reduction:weighted},
    $A_i$ is a vertex-disjoint union
    of odd cycles and double edges of $G$.
    As a consequence,~\ref{it:fractional reduced} of the current proposition
    applies to $A_i$ and
    yields
    $\chi_{f}(L(G_i)^2[A_i])\le 2\lambda$.
    Let $\widetilde{c}_i:{\cal I}(L(H)^2)\to{\mathbb R}^+$ be a corresponding
    function satisfying
    $\sum_{I\in{\cal I}(L(H)^2)}\widetilde{c}_i(I)=2\lambda$
    and such that $\sum_{I\ni e}\widetilde{c}_i(I)$ is the number of copies
    of $e$ in~$A_i$ for every~$e\in E(H)$.
    Now set $\widetilde{c}=\sum_{i=1}^p\frac{\Delta(w_i)}{2}\widetilde{c}_i$.
    We claim that $\widetilde{c}$ corresponds
    to a fractional $\lambda\Delta(G)$-coloring of $L(G)^2$.
    To see this, it suffices first to note that
    the total weight of $\widetilde{c}$
    is
    \[
      \sum_{I\in{\cal I}(L(H)^2)}\widetilde{c}(I)
      =\sum_{i=1}^p\frac{\Delta(w_i)}{2}\sum_{I\in{\cal I}(L(H)^2)}\widetilde{c}_i(I)
      =\lambda\sum_{i=1}^p\Delta(w_i)
      =\lambda\Delta(w)=\lambda\Delta(G),
    \]
    and
    that the coverage of each edge $e$ of $H$
    is
    \[
      \sum_{I\ni e}\widetilde{c}(I)=
      \sum_{i=1}^p\frac{\Delta(w_i)}{2}\sum_{I\ni e}\widetilde{c}_i(I)
      =\sum_{i=1}^pw_i(e)=w(e),
    \]
    which is exactly the multiplicity of~$e$ in~$G$.
    This proves that $\chi_{2,f}'(G)\le \lambda \Delta(G)$.

    In the other direction,
    assume~\ref{it:fractional conjecture}
    and let us prove~\ref{it:fractional reduced}.
    Let $G$ be a multigraph with underlying simple graph $H$ and let $A\subseteq E(G)$ be as in the statement.
    For a positive integer $D$,
    consider the multigraph $G_D$ obtained from $G$ by replacing each edge
    of $A$ by $D$ parallel edges.
    Applying~\ref{it:fractional conjecture} to $G_D$,
    we obtain a fractional coloring $c_D$ of $L(G_D)^2$
    with total weight
    at most $\lambda\Delta(G_D)\le \lambda(2D + \Delta(G))$.
    This implies that there exists a function $\widetilde{c}:{\cal I}(L(H)^2)\to\mathbb{R}_{0}^{+}$ of total sum  $\sum_{I\in{\cal I}(L(H)^2)}\widetilde{c}(I)$ at most $\lambda(2D + \Delta(G))$,
     such that $\sum_{I\ni e}\widetilde{c}(I)$ is at least the multiplicity of $e$ in $G_D$, for every $e\in E(H)$.
    We obtain that the normalised function $\widetilde{\widetilde{c}} := \frac{1}{D}\widetilde{c}_D$
    satisfies
    \[
      \sum_{I\in{\cal I}(L(H)^2)}\widetilde{\widetilde{c}}(I) \leq \frac{\lambda}{D}\cdot(2D+\Delta(G))
      =2\lambda + \frac{\lambda\Delta(G)}{D}
    \]
    and that
    $\sum_{I\ni e}\widetilde{\widetilde{c}}(I)$ is at least the multiplicity of
    $e$ in $A$ for every $e\in E(H)$. 
    Thus
    $\chi_f(L(G)^2[A])\le 2\lambda + \frac{\lambda\Delta(G)}{D}$
    and taking $D$ arbitrarily large gives the result.
  \end{proof}

\section{$K_4$-minor-free multigraphs}\label{sec:K4}

\begin{proof}[Proof of Theorem~\ref{thm:K4}]
  We prove by induction the following slightly stronger result.
  Given a $K_4$-minor-free multigraph~$G=(V,E)$ and a set of edges~$A\subseteq E$,
  the subgraph of~$L(G)^2$ induced by~$A$ has a proper $3\Delta_A$-vertex-colouring, where $\Delta_A$ denotes the maximum degree of $G[A]$.
  Theorem~\ref{thm:K4} therefore corresponds to the case $A=E$.

  For a multigraph~$G$, we write~$V_{\ge 2}(G)$ for the set of vertices of~$G$ with
  at least two neighbours.
  We prove the statement above by induction first on the cardinality
  of~$V_{\ge 2}(G)$, and second on the total number of vertices of~$G$.

  Note first that we may assume that~$G$ has no isolated vertices.
  
  If~$V_{\ge 2}(G)$ is empty, then $G$ is a union of vertex-disjoint multiedges,
  so the graph~$L(G)^2[A]$ is colourable with~$\Delta_A$
  colours.

  Assume now that~$V_{\ge 2}(G)$ is nonempty.
  It is known that every (nonempty) $K_4$-minor free simple graph
  has a vertex of degree at most~$2$~\cite{Dirac64}.
  Applying this result to the underlying simple graph of
  the induced submultigraph~$G[V_{\ge 2}(G)]$ yields
  a vertex~$v\in V_{\ge 2}(G)$
  with at most~$2$ neighbours in $V_{\ge 2}(G)$.
  We now consider two cases.

  First, assume that $v$ has a neighbour $w$ in $V(G)\setminus V_{\ge 2}(G)$
  (so the only neighbour of~$w$ is~$v$).
  Let~$A_w$ be the set of those edges in~$A$ that join~$v$ and~$w$.
  In this case, the degree in $L(G)^2[A]$
  of every edge $e\in A_w$ is at most~$3\Delta_A-1$.
  Indeed, $e'\in A$ is a neighbour of~$e$ in~$L(G)^2[A]$
  only if $e'$ is an edge of~$G$ incident to~$v$
  --- which concerns at most $\Delta_A-1$ other edges --- or incident to a
  neighbour $u\in V_{\ge 2}(G)$ of~$v$ (in~$G$)
  --- at most~$\Delta_A$ edges for each.
  Since~$v$ has at most two neighbours in~$V_{\ge 2}(G)$,
  this gives at most~$2\Delta_A + (\Delta_A-1)=3\Delta_A-1$ edges in total.
  It remains to apply induction on~$G':=G\setminus\{w\}$ and~$A':=A\setminus A_w$.
  This is possible because
  $|V_{\ge 2}(G')|\le |V_{\ge 2}(G)|$ and~$|V(G')|<|V(G)|$.
  It follows that $L(G)^2[A']$ has a proper colouring
  with at most $3\Delta_{A'}\le 3\Delta_{A}$ colours.
  Noting that moreover~$L(G')^2[A]=L(G)^2[A\setminus A_w]$
  and using the previous degree bound,
  this colouring can be extended to a $3\Delta_A$-colouring of~$L(G)^2$.

  Second, assume that~$v$ has no neighbour outside of~$V_{\ge 2}(G)$.
  Since~$v\in V_{\ge 2}(G)$, the vertex $v$ then has exactly two neighbours
  $u_1, u_2\in V_{\ge 2}(G)$.
  Let us consider the multigraph $G'$ constructed from~$G$ by ``splitting'' $v$
  into two new vertices~$v_1$ and~$v_2$ as follows.
  This construction is depicted on Figure~\ref{fig:proof for k4}.
  Start with~$G\setminus\{v\}$ and add an edge $u_1u_2$
  if no such edge is already present.
  Then for~$i\in\{1,2\}$, add the vertex~$v_i$, and for each edge $u_iv$ in~$G$,
  add an edge~$u_iv_i$. Let~$A'$ be the set $A$ where every edge~$u_iv$ is
  changed into the corresponding edge~$u_iv_i$.
  Observe that~$L(G)^2[A]$ is a subgraph of $L(G')^2[A']$,
  so every proper colouring of~$L(G')^2[A']$ is a proper colouring of~$L(G)^2[A]$.
  Also note that $G'$ is $K_4$-minor-free because $v_1$ and~$v_2$ each have only one
  neighbour in $G'$ and $G'\setminus\{v_1,v_2\}$ is a minor of~$G$.
  As $V_{\ge 2}(G')=V_{\ge 2}(G)\setminus\{v\}$,
  induction applied to~$G'$ and~$A'$ gives
  a proper colouring of~$L(G')^2[A']$ with $3\Delta_{A'}$ colours.
  It remains to note that~$\Delta_{A'}\le\Delta_{A}$ to conclude the proof.
\end{proof}

\noindent
Wang, Wang and Wang~\cite{WWW18} proved a similar bound for simple $K_4$-free graphs.

\begin{figure}
  \centering
  \begin{tabular}{c c c}
    \begin{tikzpicture}[scale=0.5]
      \tikzstyle{v}=[black,circle,draw,fill=black,scale=0.4];
      \node[v,label=-90:$v$] (v) at (0, -2) {};
      \foreach\i/\x in {1/-3,2/3} {
        \foreach \a in {45, 75, ..., 135} {
          \draw[xshift=\x cm] (\a:0.8) -- (0,0); }
        \node[v,label={180*\i}:$u_\i$] (u\i) at (\x, 0) {};
        \foreach\a in {-21, -7, ..., 21}{
          \draw[bend left=\a] (u\i) to (v);}
      }
    \end{tikzpicture} &  &
    \begin{tikzpicture}[scale=0.5]
      \tikzstyle{v}=[black,circle,draw,fill=black,scale=0.4];
      \foreach\i/\x in {1/-3,2/3} {
        \foreach \a in {45, 75, ..., 135} {
          \draw[xshift=\x cm] (\a:0.8) -- (0,0); }
        \node[v,label={180*\i}:$u_\i$] (u\i) at (\x, 0) {};
        \node[v,label={-90}:$v_\i$] (v\i) at (2/3*\x, -3) {};
        \foreach\a in {-21, -7, ..., 21}{
          \draw[bend left=\a] (u\i) to (v\i);}
      }
      \draw (u1) -- (u2);
    \end{tikzpicture}\\
    $G$ & & $G'$
  \end{tabular}
  \caption{One of the transformations used in the proof of Theorem~\ref{thm:K4}.}
  \label{fig:proof for k4}
\end{figure}
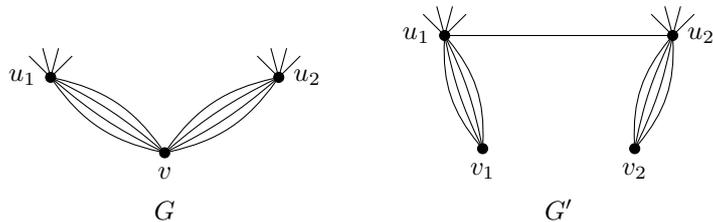

\section{Multigraphs and matchings that are strong cliques}\label{sec:isclique}

Our main objective here is to treat the $K_k$-minor-free multigraphs of edge-diameter $2$. Relatedly, instead of $K_k$-minor-freeness we consider what happens when we impose the slightly weaker condition that there is no matching of $k$ edges that are pairwise connected by an edge. These considerations are focused mainly on the strong clique number.

First, let us describe the supplemental multigraph construction that asymptotically matches Theorem~\ref{thm:isclique}. For each $k\ge 5$ and each $\Delta \ge k-2$, we construct a multigraph $S_{k,\Delta}$ as follows.

The vertex set of $S_{k,\Delta}$ is the disjoint union $A \cup B \cup \left\{c,d\right\}$, where $A= \left\{a_1,\ldots, a_{k-1} \right\}$ and  $B= \left\{b_2,\ldots, b_{k-1} \right\}$. For the edges, we have the following. The set $A$ induces a clique minus the edge $a_1a_2$. For each $2 \le i \le k-1$, there is a multiedge $a_ib_i$ of multiplicity $\Delta-(k-1)$. The vertex set $\left\{a_1,c,d\right\}$ induces a triangle which is almost a Shannon triangle, in the following sense: the edges $a_1c,a_1d, cd$ are multiedges of multiplicities $ \lceil(\Delta-(k-3))/2 \rceil, \lfloor (\Delta-(k-3))/2 \rfloor$ and $\lceil (\Delta-3)/2 \rceil$, respectively. 

Finally, $c$ is joined by a simple edge to each of $a_2,a_3,\ldots, a_{1+\lfloor (k-1)/2 \rfloor}$ and similarly, $d$ is joined by a simple edge to $a_2$ and to  $a_{2+\lfloor (k-1)/2 \rfloor}, \ldots, a_{k-1}$. Note that $A$ equals the open neighbourhood $N(\left\{c,d\right\})$, and that $N(c) \cap N(d)= \left\{a_1,a_2\right\}$. See Figure~\ref{fig:infernal}.

  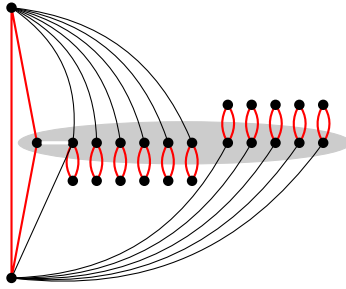
\begin{figure}[!ht]
  \begin{center}
 \begin{tikzpicture}[scale=0.112]
       \tikzstyle{v}=[black,circle,draw,fill=black,scale=0.3];
       \tikzstyle{edge}=[];
       \tikzstyle{noedge}=[very thick, white];
       \tikzstyle{clique}=[thick, red];

       \draw[gray!40,fill] (17.3,0) ellipse (19.5 and 2.5);

       \foreach\r in {2,...,7} {
         \draw[clique,bend left]
         (2.8*\r-2.8+1.4,0)node[v](A\r){} to (2.8*\r-2.8+1.4,-4.5)node[v](B\r){} to (A\r);
       }
              \foreach\r in {8,...,12} {
         \draw[clique,bend left]
         (2.8*\r-2.8+2.8,0)node[v](A\r){} to (2.8*\r-2.8+2.8,4.5)node[v](B\r){} to (A\r);
       }
       
       \draw[clique]
       (0,0)node[v](A1){}--(-3,16)node[v](C){}--(-3,-16)node[v](D){}--(A1);
       \draw[noedge](A1)--(A2);

       \foreach\r in {2,...,7} {
         \draw[edge,bend right]
         (A\r) to (C);
       }
       \foreach\r in {8,...,12} {
         \draw[edge,bend left]
         (A\r) to (D);
       }
       \draw[edge](D) to (A2);

     \end{tikzpicture}
  \end{center}
  \caption{A schematic of the graph $S_{k,\Delta}$. The edges coloured red are of nontrivial multiplicity, indeed of roughly $\Delta/2$. The white line denotes a non-edge.}\label{fig:infernal}
\end{figure}

Observe that the square of the line graph of $S_{k,\Delta}$ is indeed a clique. Thus the strong clique number of $S_{k,\Delta}$ is simply the cardinality of its edge set. The graph induced by $A$ has $\binom{k-1}{2}-1$ edges, the edges that have one endpoint in $A \backslash \left\{a_1\right\}$ and the other endpoint outside $A$ contribute $(k-2)(\Delta-(k-2))+1$ and the triangle induced by $\left\{a_1,c,d\right\}$ contributes $ \lceil \frac{3}{2}(\Delta+1) \rceil -k$ edges. Therefore 
\begin{align*}
\omega'_2(S_{k,\Delta})
&= (k-2)(\Delta-(k-2)) + \left\lceil \frac{3}{2}(\Delta+1) \right\rceil -k + \binom{k-1}{2}\\
&= \left\lceil \left(k-\frac{1}{2}\right)\Delta - \binom{k-1}{2} -\frac{1}{2} \right\rceil.
\end{align*}

\begin{prop}\label{prop:infernal}
The multigraph $S_{k,\Delta}$ is $K_k$-minor-free.
\end{prop}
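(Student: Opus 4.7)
My plan is to proceed by contradiction: assume $S_{k,\Delta}$ admits a $K_k$ minor witnessed by pairwise disjoint connected branch sets $V_1,\dots,V_k$. Edge multiplicities are irrelevant to minor containment, so I work throughout in the underlying simple graph.

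The first reduction is to restrict all branch sets to $A \cup \{c, d\}$. Each $b_j$ is a pendant whose only neighbour in the simple graph is $a_j$: a singleton branch set $\{b_j\}$ could only be adjacent to the branch set containing $a_j$, contradicting $k\ge 5$, while if $b_j$ is part of a larger branch set $V_i$, then $V_i$ must also contain $a_j$ and removing $b_j$ preserves both connectivity of $V_i$ and its external adjacencies. So $V_i \subseteq A \cup \{c, d\}$ for all $i$. Since $|A \cup \{c, d\}| = k+1$ and we need $k$ disjoint non-empty sets, either all $V_i$ are singletons or exactly one has size two and the rest are singletons.

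When all $V_i$ are singletons, they form a $K_k$ subgraph on $k$ of the $k+1$ available vertices. Omitting any vertex other than $a_1$ or $a_2$ leaves the missing edge $a_1 a_2$ inside the purported clique; omitting $a_1$ (resp.\ $a_2$) forces $c$ (resp.\ $d$) to be adjacent to all of $A \setminus \{a_1\}$ (resp.\ $A \setminus \{a_2\}$), but $c$ has no neighbour in $R := \{a_{2+\lfloor (k-1)/2\rfloor},\dots,a_{k-1}\}$ and $d$ has no neighbour in $L := \{a_3,\dots,a_{1+\lfloor (k-1)/2\rfloor}\}$, both of which are non-empty for $k \ge 5$.

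When one branch set $V_1 = \{x, y\}$ has size two, the remaining $k-1$ singletons must induce a $K_{k-1}$ on $V(G) \setminus \{x, y\}$, and I split on $\{x, y\} \cap \{c, d\}$. The case $\{x, y\}=\{c, d\}$ leaves $A$, which still contains $a_1a_2$ as a non-edge. If $\{x, y\}\subseteq A$, then $c$ and $d$ each being adjacent to all of $A \setminus \{x, y\}$ forces $L \cup R \subseteq \{x, y\}$; since $|L \cup R| = k-3$, only $k=5$ survives, but then $\{x, y\} = \{a_3, a_4\}$ leaves $\{a_1, a_2\}$ as a non-edge. If $\{x, y\} = \{c, a_j\}$ with $a_j \in N(c)$, then $A \setminus \{a_j\}$ being a clique forces $a_j \in \{a_1, a_2\}$, while $d$ being adjacent to all of $A \setminus \{a_j\}$ forces $L \subseteq \{a_j\}$, which is impossible since $L \cap \{a_1, a_2\}=\emptyset$ and $|L|\ge 1$ for $k\ge 5$; the case $\{x, y\} = \{d, a_j\}$ is symmetric. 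The main obstacle is organisational: tracking the interaction of the two asymmetries built into the construction (the missing edge $a_1 a_2$ inside $A$, and the complementary holes $L, R$ in the $A$-neighbourhoods of $c$ and $d$), with the threshold case $k=5$ being the most delicate since the holes are then barely large enough to force a contradiction.
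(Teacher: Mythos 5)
Your proof is correct. Both your argument and the paper's begin with the same reductions: pass to the underlying simple graph, eliminate the pendant vertices of $B$ (you argue directly about branch sets; the paper contracts the pendant edges $a_ib_i$), and reduce to the $(k+1)$-vertex graph $T$ induced on $A\cup\{c,d\}$. Where you diverge is the endgame. The paper exhibits three pairwise vertex-disjoint non-edges of $T$ --- namely $a_1a_2$, $ca_{k-1}$, and $da_3$ --- and observes that this single structural fact already rules out a $K_k$ minor: since $T$ has only $k+1$ vertices, any $K_k$ minor arises by deleting one vertex or contracting one edge, and either operation can destroy at most two of three disjoint non-edges. You instead enumerate the possible branch-set configurations (all singletons omitting one vertex, or exactly one pair) and hunt for a surviving non-edge case by case, with $k=5$ as the threshold case. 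Both arguments are sound; the paper's criterion buys brevity and avoids your case split entirely, while your approach is more explicit about which asymmetry of the construction kills each configuration.
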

\begin{proof}
We may replace every multiedge with a simple edge. Furthermore, since each vertex of $B$ has only one neighbour, we may contract the edge $a_ib_i$, for each $2 \le i \le k-1$.  If the resulting graph $T$ is $K_k$-minor-free then so is $S_{k,\Delta}$.  Now, because $T$ has $k+1$ vertices, it suffices to show that $K_{k}$ cannot be obtained from $T$ by contracting a single edge. To conclude that this is indeed not possible, it suffices to find three vertex-disjoint non-edges in $T$.  Because $k\ge 5$, it follows that $a_1a_2$, $ca_{k-1}$ and $da_3$ satisfy this requirement.
\end{proof}

Next we prove Theorem~\ref{thm:isclique} using the Tutte--Berge formula, extending an argument from~\cite{LoKa19}.
Given a multigraph $G$, let $\mu(G)$ denote the size of the largest matching of $G$, and let $o(G)$ denote the number of components of $G$ that have an odd number of vertices. By the Tutte--Berge formula~\cite{TutteBerge1958}, it holds for any multigraph $G=(V,E)$ that
$\mu(G)= \frac{1}{2} \min_{U \subseteq V} \left( |U| -o(G-U) + |V| \right)$.

We actually prove the following slightly stronger form of Theorem~\ref{thm:isclique}, in which the condition of $K_k$-minor-freeness is relaxed to the condition that $G$ does not contain a $k$-edge matching. 

\begin{thm}\label{thm:iscliquestronger}
Let $k\ge 5$.
For any multigraph $G$ of maximum degree $\Delta$ and matching number less than $k$ such that every two of its edges are incident or joined by an edge, 
the strong clique number of $G$ satisfies
$\omega_2'(G)  \le (k - \frac12 ) \Delta.$ 
\end{thm}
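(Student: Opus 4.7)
The plan is to combine the Tutte--Berge formula with a simple structural consequence of the edge-diameter-$2$ hypothesis. Since every two edges of $G$ are incident or joined, $E(G)$ itself is a strong clique, hence $\omega_2'(G)=|E(G)|$; it therefore suffices to bound $|E(G)|$.

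The key observation is that, for any $U\subseteq V(G)$, at most one connected component of $G-U$ can contain an edge. Indeed, if edges $e\in C_i$ and $f\in C_j$ lie in distinct components of $G-U$, they are vertex-disjoint, so by hypothesis some edge $g$ joins them; both endpoints of $g$ then lie in $V(e)\cup V(f)\subseteq V(G)\setminus U$, making $g$ an edge of $G-U$ running between two different components $C_i$ and $C_j$, a contradiction.

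Next, I would take $U$ to be a Tutte--Berge minimizer, so that $2\mu(G)=|U|+|V(G)|-o(G-U)$. Let $C^\star$ denote the unique component of $G-U$ containing any edge (the theorem is trivial if no such component exists), write $n^\star=|V(C^\star)|$, and let $s$ be the number of isolated vertices of $G-U$. Since the remaining components are singletons, $|V(G)|=|U|+n^\star+s$, while $o(G-U)=s$ if $n^\star$ is even and $s+1$ if $n^\star$ is odd. Substituting into the Tutte--Berge identity yields $|U|=\mu(G)-\lfloor n^\star/2\rfloor$.

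To conclude, I split $|E(G)|$ into edges incident to $U$ and edges inside $C^\star$. A standard degree count gives at most $|U|\Delta$ edges incident to $U$ (since $\sum_{v\in U}d(v)\le|U|\Delta$ counts each such edge at least once), and at most $n^\star\Delta/2$ edges in $C^\star$. Summing and inserting the expression for $|U|$ makes the $n^\star$-dependence cancel down to $\Delta/2$ when $n^\star$ is odd (and to $0$ when $n^\star$ is even), so
\[
|E(G)|\;\le\;\mu(G)\Delta+\tfrac{\Delta}{2}\;\le\;(k-\tfrac{1}{2})\Delta,
\]
using $\mu(G)\le k-1$. The only delicate point is the structural observation above; the rest is routine bookkeeping via Tutte--Berge and degree counting, so I do not anticipate a serious obstacle.
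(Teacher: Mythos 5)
Your proof is correct and takes essentially the same approach as the paper's: both rest on the observation that at most one component of $G-U$ contains an edge, then invoke the Tutte--Berge formula for a minimizing $U$ and split $|E(G)|$ into edges incident to $U$ (at most $|U|\Delta$) and edges inside the single non-trivial component (at most half its order times $\Delta$). The only cosmetic difference is that you solve for $|U|$ explicitly via $\lfloor n^\star/2\rfloor$, whereas the paper records the inequality $\mu(G)\ge|U|+(|A|-1)/2$ and substitutes directly; the arithmetic is identical.
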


\begin{proof}
First note that for any $U\subseteq V(G)$, the graph $G-U$ has at most one component with an edge. This is because edges in different components of $G-U$ cannot be incident or joined by an edge in $G$. 

Let $U \subseteq V(G)$ be such that $\mu(G)=\frac{1}{2} \left( |U| -o(G-U) + |V(G)| \right)$.  If $G-U$ has no edges, then $o(G-U)=|V(G)|-|U|$, so that $\mu(G)= |U|$ and $|E(G)|\le \sum_{u \in U} \deg(u) \le \Delta \cdot |U| \le \Delta \cdot (k-1)$. 
We may thus assume that $G-U$ has a component with an edge. Let $A$ denote the set of vertices in this unique nontrivial component. If $|A|$ is even then $|V(G)|=|U| + o(G-U) +|A|$. If $|A|$ is odd then $|V(G)|=|U| + o(G-U) +|A|-1$. Therefore
\(
\mu(G) \ge |U| + (|A|-1)/2.
\)
It follows that 
\begin{align*}
|E(G)|
&\le \sum_{u \in U} \deg(u) +  |E(G[A])|
\le \Delta \cdot ( |U| + |A|/2)
 \le \Delta \cdot ( \mu(G) + 1/2)&\\
  &\le  \Delta \cdot ( k - 1/2). &\qedhere
\end{align*}
\end{proof}

It is natural to wonder how the conclusion of Theorem~\ref{thm:main} is affected by relaxing $K_k$-minor-freeness in a similar way as in Theorem~\ref{thm:iscliquestronger}. In this case we can easily obtain an upper bound that is only an additive factor $\frac32\Delta$ larger than the bound in Theorem~\ref{thm:main}. 

\begin{prop}\label{prop:edgepacking}
For any multigraph $G$ of maximum degree $\Delta$ that does not contain $k$ vertex-disjoint edges every two of which are joined by an edge, the strong clique number of $G$ satisfies $\omega'_2(G) \le \frac{3}{2} (k-1) \Delta$.
\end{prop}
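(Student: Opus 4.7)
The plan is to obtain this as an essentially immediate consequence of the machinery already developed for Theorem~\ref{thm:main}, specifically Lemma~\ref{lem:reduction:weighted}, but without having to invoke the delicate case analysis at the end of the proof of Theorem~\ref{thm:main}. The hypothesis here is strictly weaker than $K_k$-minor-freeness (a strong clique of $k$ vertex-disjoint edges would, upon contraction, yield a $K_k$-minor), so one might expect a slightly weaker bound, which accounts for $\frac{3}{2}(k-1)\Delta$ instead of $\frac{3}{2}(k-2)\Delta$.

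First, let $X$ be a strong clique of $G$ of size $\omega'_2(G)$, and let $H$ be the underlying simple graph. Define $w:E(H)\to\mathbb{N}$ by setting $w(e)$ to be the multiplicity of $e$ in $X$, so $w(H)=|X|$ and $\Delta(w)\le\Delta$. Apply Lemma~\ref{lem:reduction:weighted} to get weights $w_1,\dots,w_\ell$, choose the index $j$ maximising $w_j(H)/\Delta(w_j)$, and set $t=\frac{2}{\Delta(w_j)}w_j$. Exactly as in the proof of Theorem~\ref{thm:main}, one derives $|X|\le\frac{\Delta}{2}t(H)$, and the support $Q$ of $t$ is a vertex-disjoint union $\mathcal{D}_1,\dots,\mathcal{D}_\ell$ of odd cycles and single edges which, being a subset of $X$, is itself a strong clique of $G$.

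Next, write $t(H)=3\sum_{i=1}^\ell\alpha(L(\mathcal{D}_i))-\zeta(Q)$, where $\zeta(Q)=\sum_i\zeta(\mathcal{D}_i)\ge 0$ with $\zeta(K_2)=1$ and $\zeta(C_{2j+1})=j-1$. Assume for contradiction that $\omega'_2(G)>\frac{3}{2}(k-1)\Delta$. Then $t(H)>3(k-1)$, and using $\zeta(Q)\ge 0$ we obtain
\[
3\sum_{i=1}^\ell\alpha(L(\mathcal{D}_i))=t(H)+\zeta(Q)>3(k-1),
\]
so $\sum_{i=1}^\ell\alpha(L(\mathcal{D}_i))\ge k$.

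Finally, a union of maximum stable sets of the graphs $L(\mathcal{D}_i)$ corresponds to a matching $M$ of $H$, composed entirely of edges of $Q$, with $|M|\ge k$. Any two edges of $M$ are vertex-disjoint by construction, and since $M\subseteq Q\subseteq X$ and $X$ is a strong clique, any two vertex-disjoint edges of $X$ must be joined by an edge of $G$. Thus $M$ contains $k$ vertex-disjoint edges pairwise joined by an edge, contradicting the hypothesis on $G$. I do not anticipate any real obstacle: the whole argument is a strict simplification of the proof of Theorem~\ref{thm:main}, since we only need to extract a large matching of edges from a common strong clique and never need to upgrade it to an actual $K_k$-minor by constructing an auxiliary tree.
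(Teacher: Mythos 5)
Your proof is correct, but it takes a substantially heavier route than the paper's own one-line argument. The paper simply lets $H$ be the submultigraph formed by the edges of a maximum strong clique, applies Shannon's Theorem to decompose $E(H)$ into at most $\frac32\Delta(H)$ matchings, and notes that the largest such matching $M$ has $|M|\ge |E(H)|/(\frac32\Delta(H))$; since $M$ consists of pairwise vertex-disjoint edges of a strong clique, any two are joined by an edge, forcing $|M|\le k-1$ and hence $\omega'_2(G)\le\frac32(k-1)\Delta$. You instead re-run the reduction of Lemma~\ref{lem:reduction:weighted} together with the $\zeta$-accounting from the proof of Theorem~\ref{thm:main}, and extract a size-$k$ matching from the support $Q$ of the extremal reduced weight. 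This is valid --- your inequalities $t(H) > 3(k-1)$, $\zeta(Q)\ge 0$, and $\sum_i\alpha(L(\mathcal{D}_i))\ge k$ are all correct, and the vertex-disjoint edges of $Q$ are indeed pairwise joined by an edge of $G$ since they underlie edges of the strong clique $X$ --- but it is considerably more machinery than the statement requires. One small imprecision: $Q$ is a set of simple edges of $H$, not a subset of the multiedge set $X$; what you actually use (and what holds) is that every edge of $Q$ has at least one parallel copy in $X$, so that a matching in $Q$ lifts to a set of pairwise vertex-disjoint edges of $X$. Your observation that the argument never needs to upgrade the matching to a full $K_k$-minor is exactly why the bound is $\frac32(k-1)\Delta$ rather than $\frac32(k-2)\Delta$, and why Shannon alone suffices here.
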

\begin{proof}
Let $H$ be a submultigraph of $G$ such that $E(H)$ forms a maximum clique in $L(G)^2$, so in particular $|E(H)|=\omega'_2(G)$. By Shannon's Theorem, the edges of $H$ can be properly coloured with $\tfrac{3}{2}\Delta(H)$ colours. Therefore $H$ has a matching $M$ of size $|M| \ge |E(H)|/(\frac32 \Delta(H))$. Since every two edges of $H$ are joined by an edge in $G$, we have $|M|\le k-1$.
\end{proof}

\noindent
It remains unclear to what extent Proposition~\ref{prop:edgepacking} is sharp. For $k\ge 5$, it seems plausible that Theorem~\ref{thm:main} also holds under the relaxed condition of Proposition~\ref{prop:edgepacking}. On the other hand, such a strengthening does not hold if $k=4$, since then (for $\Delta$ large enough) there exists a multigraph as in Proposition~\ref{prop:edgepacking} such that $\omega_2'(G)= \frac{7}{2}\Delta + O(k) > 3\Delta$, namely the graph obtained from $G_{k,\Delta}$ in Section~\ref{sec:diabolical} by deleting the two vertices $a$ and $a'$.

Next we remark on a nonasymptotic strengthening of Theorem~\ref{thm:simple}, also in the more general setting of graphs without $k$ vertex-disjoint edges every two of which are joined by an edge. First note that by Vizing's Theorem every graph $G$ of maximum degree $\Delta$ has matching number at least $|E(G)| / (\Delta+1)$. This can be slightly improved according to the following result of Chv\'atal and Hanson~\cite{CH76}, which is sharp throughout the range of $\Delta$ and the matching number $\mu$, cf.~\cite{BK09}.

\begin{thm}[\cite{CH76}]\label{thm:matching}
For any graph $G=(V,E)$ with matching number $\mu$ and maximum degree $\Delta$,
\[|E| \le \mu \Delta + \left\lfloor \frac{\mu}{ \left\lceil  \frac{\Delta}{2}\right\rceil} \right\rfloor \cdot \left\lfloor \frac{\Delta}{2}\right\rfloor.\]
\end{thm}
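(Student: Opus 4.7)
The plan is to combine an additivity reduction over disjoint unions with a block-by-block edge count based on the Gallai--Edmonds decomposition, mirroring the spirit of the original Chv\'atal--Hanson argument.

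First, I would observe that the bound is superadditive over disjoint unions. Writing $q = \lceil \Delta/2 \rceil$ and $r = \lfloor \Delta/2 \rfloor$, if $G = G_1 \sqcup G_2$ with matching numbers $\mu_1, \mu_2$, then summing the two individual bounds yields
\[
|E(G_1)| + |E(G_2)| \le (\mu_1 + \mu_2)\Delta + (\lfloor \mu_1/q \rfloor + \lfloor \mu_2/q \rfloor) r \le \mu\Delta + \lfloor \mu/q \rfloor r,
\]
where the final inequality uses $\lfloor \mu_1/q \rfloor + \lfloor \mu_2/q \rfloor \le \lfloor (\mu_1+\mu_2)/q \rfloor$. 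Hence it suffices to treat connected $G$.

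For connected $G$, I would invoke the Gallai--Edmonds decomposition $V(G) = D \cup A \cup C$, whose components $D_1, \ldots, D_t$ of $G[D]$ are factor-critical (with $|D_i| = 2k_i + 1$) and whose $G[C]$ admits a perfect matching, giving the decomposition $\mu = \sum_i k_i + |A| + |C|/2$. The local estimate I would use is that any factor-critical graph on $2k+1$ vertices of maximum degree $\Delta$ has at most $k\Delta + r$ edges: this follows from $2|E| \le (2k+1)\Delta$ together with integrality (the half-integer $\Delta/2$ rounds down to $r$ for odd $\Delta$). Combining this per-block bound with a direct degree-sum bound on the edges incident to $A \cup C$ produces a global estimate of the form $\mu\Delta + tr + \varepsilon$ for some correction $\varepsilon$ from cross-edges.

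The main obstacle is reconciling the per-block excess $tr$ with the desired term $\lfloor \mu/q \rfloor r$: small blocks (with $k_i < q$) can each contribute an excess of $r$ while consuming only $k_i$ of the matching-number budget, so a naive summation overshoots. The crux of the argument is a careful trade-off: a small factor-critical block $D_i$ is either very close to a clique $K_{2k_i+1}$ (in which case the tighter estimate $|E(D_i)| \le \binom{2k_i + 1}{2}$ takes over and is significantly smaller than $k_i\Delta + r$), or else the block must ``pay'' for its excess through matching-number resources absorbed via its neighbour in $A$. Managing this bookkeeping --- particularly delicate for odd $\Delta$, where the extremal factor-critical blocks are not pure cliques but structures like $K_{2,3}$ with one extra edge (as witnessed by the case $\Delta = 3$, $\mu = 2$) --- is the technical heart of the proof, and is where the original Chv\'atal--Hanson argument invokes a precise LP/Vizing-based analysis of the matching polytope.
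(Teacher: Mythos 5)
The paper does not prove this statement; it is quoted directly from Chv\'atal and Hanson~\cite{CH76} and then used as a black box in Corollary~\ref{cor:simplegraph}, so there is no in-paper proof to compare against.

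Judged on its own, your plan gathers sound preliminary ingredients but leaves the core of the argument open. The reduction to connected graphs is valid (one also needs that the right-hand side $\mu\Delta + \lfloor \mu/\lceil\Delta/2\rceil\rfloor\lfloor\Delta/2\rfloor$ is nondecreasing in $\Delta$ for fixed $\mu$, so that applying the bound to a component with smaller maximum degree is permissible; this does hold). The Gallai--Edmonds decomposition and the per-block estimate $|E(D_i)| \le k_i\Delta + \lfloor\Delta/2\rfloor$ for a factor-critical component on $2k_i+1$ vertices are likewise correct. But you explicitly flag, and then do not close, the genuine gap: summing the per-block estimates over the $t$ components of $G[D]$ produces an excess $t\lfloor\Delta/2\rfloor$, which can vastly exceed the target $\lfloor\mu/\lceil\Delta/2\rceil\rfloor\lfloor\Delta/2\rfloor$ when there are many small factor-critical blocks. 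The proposed dichotomy for a small block --- either it is nearly a clique so that $\binom{2k_i+1}{2}$ is the better bound, or its excess is charged to matching resources absorbed by $A$ --- is at this point only a heuristic: no charging rule is specified, the edges inside $A\cup C$ and between $A$ and $D$ are not accounted for, and the parity issues you yourself raise for odd $\Delta$ (where extremal factor-critical blocks need not be cliques) are precisely where such a scheme would be stress-tested. That trade-off is the theorem; the steps preceding it are routine. As written, the proposal is an accurate map of where the difficulty lies, not a proof of the bound.
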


\noindent
As a corollary, we obtain the following bound for the strong clique number.

\begin{cor}\label{cor:simplegraph}
Let $G$ be a graph of maximum degree $\Delta$ that does not contain $k$ vertex-disjoint edges every two of which are joined by an edge. Then the strong clique number of $G$ satisfies 
\[
\omega'_2(G) \le  (k-1)\Delta + \left\lfloor \frac{k-1}{ \left\lceil  \frac{\Delta}{2}\right\rceil} \right\rfloor \cdot \left\lfloor \frac{\Delta}{2}\right\rfloor.
\]
\end{cor}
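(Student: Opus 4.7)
The plan is to apply Theorem~\ref{thm:matching} to the subgraph spanned by a maximum strong clique. Specifically, let $X \subseteq E(G)$ be a strong clique of $G$ with $|X| = \omega'_2(G)$, and consider the spanning subgraph $H = (V(G), X)$. Since $H$ is a subgraph of $G$, its maximum degree is at most $\Delta$. The strategy is then to bound the matching number $\mu(H)$ from above by $k-1$, and to plug this and $\Delta$ into Theorem~\ref{thm:matching} to obtain the claimed inequality.

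The key observation is that any matching $M \subseteq X$ automatically consists of $|M|$ pairwise vertex-disjoint edges, and since $M \subseteq X$ is a strong clique, every two of its edges must be incident or joined by an edge of $G$; being vertex-disjoint, they cannot be incident, and so they must be joined by an edge. Hence, if $\mu(H) \ge k$, one obtains $k$ vertex-disjoint edges of $G$ pairwise joined by an edge, contradicting the hypothesis. Therefore $\mu(H) \le k-1$.

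Finally, Theorem~\ref{thm:matching} applied to $H$ with matching number at most $k-1$ and maximum degree at most $\Delta$ gives
\[
\omega'_2(G) \;=\; |X| \;=\; |E(H)| \;\le\; (k-1)\Delta + \left\lfloor \frac{k-1}{\left\lceil \tfrac{\Delta}{2}\right\rceil} \right\rfloor \cdot \left\lfloor \tfrac{\Delta}{2}\right\rfloor,
\]
where we use the monotonicity of the right-hand side in the matching number parameter to replace $\mu(H)$ by the upper bound $k-1$. No real obstacle is expected: the argument is essentially a one-line reduction to Chvátal--Hanson, and the only care needed is in observing that the Chvátal--Hanson bound is indeed monotone in $\mu$, so that the substitution $\mu(H) \le k-1$ is legitimate.
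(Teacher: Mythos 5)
Your proposal is correct and follows essentially the same argument as the paper: take the subgraph $H$ spanned by a maximum strong clique, observe that vertex-disjoint edges of a strong clique must be joined by an edge so $\mu(H)\le k-1$, and apply Theorem~\ref{thm:matching}. Your extra remark on monotonicity of the Chv\'atal--Hanson bound in $\mu$ (and implicitly in $\Delta$) is a sensible precaution that the paper leaves implicit.
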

\begin{proof}
Let $H$ be a subgraph of $G$ such that $E(H)$ is a maximum clique in $L(G)^2$. Since every two vertex-disjoint edges of $H$ are joined by an edge in $G$, the matching number of $H$ is at most $k-1$. Applying Theorem~\ref{thm:matching} to $H$ yields the desired upper bound on $\omega'_2(G)=|E(H)|$.
\end{proof}

\noindent
Corollary~\ref{cor:simplegraph} is sharp for the odd cliques, which correspond to the case $\Delta=2k-2$, and for the complete bipartite graph $K_{k-1,\Delta}$ if $\Delta \ge 2k-1$.

For small values of $\Delta $ compared to $\mu$, the known extremal graphs for Theorem~\ref{thm:matching} are disconnected; they consist of a disjoint union of singletons and connected graphs $G_i$ with $(\mu(G_i)+1/2)\Delta(G_i)$ edges~\cite{CH76}.
For that reason, and keeping in mind Theorem~\ref{thm:iscliquestronger}, its sharpness for the blown-up $5$-cycle, as well as the fact that $(k-1)(\Delta+1)=(k-\frac12)\Delta$ when $\Delta=2k-2$, we believe that the following improvement upon Corollary~\ref{cor:simplegraph} could be true.
\begin{conjecture}
Let $G$ be a graph of maximum degree $\Delta$ that does not contain $k$ vertex-disjoint edges every two of which are joined by an edge. Then the strong clique number of $G$ satisfies
\[\omega'_2(G) \le
\begin{cases}
(k-\frac12)\Delta & \mbox{ if }\Delta \le 2k-2;\\
(k-1) \Delta & \mbox{ if } \Delta \ge 2k-1.
\end{cases}
\]
\end{conjecture}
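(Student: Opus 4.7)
The case $\Delta \ge 2k-1$ is immediate from Corollary~\ref{cor:simplegraph}: then $\lceil \Delta/2 \rceil \ge k$, so $\lfloor (k-1)/\lceil \Delta/2\rceil \rfloor = 0$, the correction term vanishes, and the bound collapses to $\omega_2'(G) \le (k-1)\Delta$.

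For $\Delta \le 2k-2$, my plan is to extend the Tutte--Berge argument from the proof of Theorem~\ref{thm:iscliquestronger}. Let $X$ be a maximum strong clique in $G$ and set $H := (V(G),X)$, so that $\Delta(H)\le\Delta$ and $\mu(H)\le k-1$. Choose a Tutte--Berge witness $U\subseteq V(H)$ realizing $\mu(H) = \frac{1}{2}(|V(H)|+|U|-o(H-U))$; write $s$ for the number of singleton components of $H-U$ and $o^{\ast}$ for the number of odd components of size at least three. Bounding edges incident to $U$ by $\Delta|U|$ and edges inside each nonsingleton component by $\frac{\Delta}{2}$ times its vertex count, together with the Tutte--Berge identity $|V(H)|+|U|-s=2\mu(H)+o^{\ast}$, yields
\[
|E(H)| \le \Delta|U|+\frac{\Delta}{2}\bigl(|V(H)|-|U|-s\bigr) = \Delta\mu(H)+\frac{\Delta}{2}o^{\ast} \le \Delta(k-1)+\frac{\Delta}{2}o^{\ast}.
\]
When $o^{\ast}\le 1$ this is already the desired $(k-\frac{1}{2})\Delta$, exactly mirroring the single-component situation of Theorem~\ref{thm:iscliquestronger}.

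The main obstacle is therefore the case $o^{\ast}\ge 2$. Here the strong-clique hypothesis must be used decisively: for any two odd nonsingleton components $C_1, C_2$ of $H-U$ and any pair $(e_1,e_2)\in E(C_1)\times E(C_2)$, the edges $e_1,e_2$ are vertex-disjoint and so must be joined by a $G$-edge; moreover such a joining edge necessarily lies outside $X$, for otherwise it would merge $C_1$ and $C_2$ inside $H-U$. These ``phantom'' joining edges still consume $G$-degree at vertices of $V(C_1)\cup V(C_2)$, forcing $|E(C_i)|$ to be strictly less than $n_i\Delta/2$. The plan is then to prove that this forced slack sums to at least $\Delta/2$ per extra nontrivial odd component, exactly cancelling the inflation of $\Delta o^{\ast}/2$ in the bound above. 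Making this saving sharp --- in particular ruling out joining edges that cover many pairs at once with little per-pair degree cost, and controlling the interaction with $H$-edges from $V(C_i)$ to $U$ --- is where I expect the bulk of the difficulty to lie.

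If the direct degree accounting proves too delicate, a fallback is to modify the witness $U$: whenever two nontrivial odd components exist, try to find a vertex to add to $U$ that merges them (or to remove that decreases $o^{\ast}$ without inflating $\mu(H)$), and then induct on $|V(H)|+|U|$ to reduce to the case $o^{\ast}\le 1$ already settled above. Either way, the key point is that the extremal Chv\'atal--Hanson configurations (disjoint unions of factor-critical components of max-degree $\Delta$, as discussed in~\cite{CH76}) simply cannot arise as strong cliques, because every vertex of such an extremal component is already saturated and therefore cannot accommodate the joining edges that the strong-clique condition demands.
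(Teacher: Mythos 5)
This statement is an open conjecture in the paper --- the authors write that they ``believe that the following improvement upon Corollary~\ref{cor:simplegraph} could be true'' and offer no proof --- so there is no argument of theirs to compare against; your proposal must stand on its own.

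The parts you actually carry out are correct. For $\Delta \ge 2k-1$ one has $\lceil \Delta/2 \rceil \ge k$, so the correction term in Corollary~\ref{cor:simplegraph} vanishes and $\omega_2'(G) \le (k-1)\Delta$ follows at once. The Tutte--Berge setup for $\Delta \le 2k-2$ is also sound: with $U$ a Tutte--Berge witness for $H=(V(G),X)$, $s$ the number of singleton components and $o^{\ast}$ the number of odd components of size at least three, the identity $2\mu(H) = |V(H)|+|U|-s-o^{\ast}$ combined with $|E(H)| \le \Delta|U| + \tfrac{\Delta}{2}(|V(H)|-|U|-s)$ does give $|E(H)| \le \Delta\mu(H) + \tfrac{\Delta}{2}o^{\ast}$, which yields $(k-\tfrac12)\Delta$ whenever $o^{\ast} \le 1$.

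The case $o^{\ast}\ge 2$, which you flag yourself, is a genuine gap, and it is exactly where the conjecture lives. Your observation that phantom joining edges consume $G$-degree at the odd components --- so a $\Delta$-regular odd component leaves no room for them --- is correct and is the right heuristic for why Chv\'atal--Hanson extremal configurations should be excluded. But you have not shown that the forced slack amounts to at least $\Delta/2$ per extra nontrivial odd component: a single joining edge can certify many pairs at once, and edges from $V(C_i)$ into $U$ can absorb degree in ways your accounting has not ruled out. The ``fallback'' of perturbing $U$ is also not convincing as stated: $U$ already minimizes $|U|-o(H-U)$, and the odd factor-critical pieces in the Gallai--Edmonds decomposition need not merge under any local modification. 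So what you have is a sensible plan of attack on the hard half of the conjecture, not a proof of it.
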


\noindent
We comment that the corresponding conclusion for strong edge-colouring fails by the existence of graphs of girth $6$ having strong chromatic index $\Omega(\Delta^2/\log \Delta)$.

\subsection*{Acknowledgement}

We are grateful to the anonymous referees for their careful reading and comments, which led to improvements in the presentation of this work.

\bibliographystyle{abbrv}
\bibliography{stronghadwiger}

\begin{thebibliography}{10}

\bibitem{BK09}
N.~Balachandran and N.~Khare.
\newblock Graphs with restricted valency and matching number.
\newblock {\em Discrete Math.}, 309:4176--4180, 2009.

\bibitem{TutteBerge1958}
C.~Berge.
\newblock Sur le couplage maximum d'un graphe.
\newblock {\em Comptes rendus hebdomadaires des s\'eances de l'{A}cad\'emie des
  sciences}, 247:258--259, 1958.

\bibitem{CaKa19}
W.~Cames~van Batenburg and R.~J. Kang.
\newblock Squared chromatic number without claws or large cliques.
\newblock {\em Canadian Mathematical Bulletin}, 62(1):23--35, 2019.

\bibitem{CH76}
V.~Chv\'atal and D.~Hanson.
\newblock {D}egrees and {M}atchings.
\newblock {\em J. Combin. Theory Ser. B}, 20:128--138, 1976.

\bibitem{JKP19}
R.~{de Joannis de Verclos}, R.~J. {Kang}, and L.~{Pastor}.
\newblock {Colouring squares of claw-free graphs}.
\newblock {\em Canadian Journal of Mathematics}, 71:113--129, 2019.

\bibitem{Dirac64}
G.~A. Dirac.
\newblock Homomorphism theorems for graphs.
\newblock {\em Mathematische Annalen}, 153(1):69--80, Feb 1964.

\bibitem{Erd88}
P.~Erd{\H{o}}s.
\newblock Problems and results in combinatorial analysis and graph theory.
\newblock In {\em Proceedings of the {F}irst {J}apan {C}onference on {G}raph
  {T}heory and {A}pplications ({H}akone, 1986)}, volume~72, pages 81--92, 1988.

\bibitem{FSGT90}
R.~J. Faudree, R.~H. Schelp, A.~Gy\'arf\'as, and Z.~Tuza.
\newblock The strong chromatic index of graphs.
\newblock {\em Ars Combin.}, 29(B):205--211, 1990.
\newblock Twelfth British Combinatorial Conference (Norwich, 1989).

\bibitem{Had43}
H.~Hadwiger.
\newblock \"{U}ber eine {K}lassifikation der {S}treckenkomplexe.
\newblock {\em Vierteljschr. Naturforsch. Ges. Z\"{u}rich}, 88:133--142, 1943.

\bibitem{Kos82}
A.~V. Kostochka.
\newblock The minimum {H}adwiger number for graphs with a given mean degree of
  vertices.
\newblock {\em Metody Diskret. Analiz.}, (38):37--58, 1982.

\bibitem{KuOs03}
D.~K\"{u}hn and D.~Osthus.
\newblock Minors in graphs of large girth.
\newblock {\em Random Structures Algorithms}, 22(2):213--225, 2003.

\bibitem{P20}
S.~Norin, L.~Postle, and Z.-X. Song.
\newblock An even better density increment theorem and its application to
  hadwiger's conjecture.
\newblock {\em ArXiv e-prints}, 2020.

\bibitem{NPS20}
L.~Postle.
\newblock Breaking the degeneracy barrier for coloring graphs with no
  $k_t$-minor.
\newblock {\em ArXiv e-prints}, 2020.

\bibitem{ReSe98}
B.~Reed and P.~Seymour.
\newblock Fractional colouring and {H}adwiger's conjecture.
\newblock {\em J. Combin. Theory Ser. B}, 74(2):147--152, 1998.

\bibitem{LoKa19}
W.~van Loon and R.~J. Kang.
\newblock Tree-like distance colouring for planar graphs of sufficient girth.
\newblock {\em Electron. J. Combin.}, 26(1):Paper 22, 15, 2019.

\bibitem{WWW18}
Y.~Wang, P.~Wang, and W.~Wang.
\newblock Strong chromatic index of {$K_4$}-minor free graphs.
\newblock {\em Inform. Process. Lett.}, 129:53--56, 2018.

\end{thebibliography}

\end{document}